\documentclass[12pt,A4,reqno]{amsart}
\usepackage{amsfonts}
\usepackage{mathrsfs}
\usepackage[T1]{fontenc}
\usepackage{amssymb}
\usepackage{amsmath,amscd}
\usepackage{color}
\usepackage{epsf}
\usepackage{graphicx}
\usepackage[curve]{xypic}
\usepackage{tikz}
\usepackage{dsfont}
\usetikzlibrary{matrix}

\theoremstyle{plain}
\newtheorem{thm}{Theorem}[section]
\newtheorem{lem}[thm]{Lemma}
\newtheorem{prop}[thm]{Proposition}
\newtheorem{cor}[thm]{Corollary}


\setlength{\oddsidemargin}{0.5cm} \setlength{\evensidemargin}{0.5cm}
\setlength{\textheight}{20cm} \setlength{\textwidth}{14.5cm}

\theoremstyle{definition}

\newtheorem{rem}[thm]{Remark}
\newtheorem{exam}{Example}


\newcommand{\Q}{\mathbb Q}
\newcommand{\R}{\mathbb R}
\newcommand{\Z}{\mathbb Z}
\newcommand{\F}{\mathbb F}
\newcommand{\nn}{\vskip 0.2cm}
\newcommand{\n}{\vskip 0.1cm}

\newcommand*{\longhookrightarrow}{\ensuremath{\lhook\joinrel\relbar\joinrel\rightarrow}}

\begin{document}

\title [\ ] {On Homology Roses and the $\mathrm{D(2)}$-problem}

\author{Xifeng Jin$^\dagger$}
\address{$^\dagger$Department of Mathematics, Nanjing University, Nanjing, 210093, P.R.China}
\email{xifengj@gmail.com}

\author{Yang Su$^\ddag$}
\address{$^\ddag$Institute of Mathematics, Hua Loo-Keng Key Laboratory of Mathematics, 
 Chinese Academic of Science, Beijing, 100190, 
     P.R.China}
\email{suyang@math.ac.cn}

\author{Li Yu*}
\address{*Department of Mathematics and IMS, Nanjing University, Nanjing, 210093, P.R.China}
 \email{yuli@nju.edu.cn}
 \thanks{
 *The author is partially supported by
 Natural Science Foundation of China (grant no.11001120). This work is also
 funded by the PAPD (priority academic program development) of Jiangsu higher education institutions.}


\subjclass[2010]{57Q12, 57M07, 57M10, 57M20}

\keywords{Homology rose, deficiency, group cohomology, Carlsson conjecture, 
$\mathrm{D(2)}$-problem, gap, efficiency, Cockcroft property}

\begin{abstract}
 For a commutative ring $R$ with a unit, an $R$-homology rose is a topological space whose homology groups with $R$-coefficients agree with those of a
 bouquet of cirlces. In this paper, we study some special properties of covering spaces and 
 fundamental groups of $R$-homology roses, from which we obtain 
  some result supporting the Carlsson conjecture on free $(\Z_p)^r$ actions.
 In addition, for a group $G$ and a field $\F$, we
  define an integer called the $\F$-gap of $G$, which is an obstruction
     for $G$ to be realized as the fundamental group of a $2$-dimensional
     $\F$-homology rose. Furthermore,
      we discuss how to search candidates of the counterexamples of
     Wall's $\mathrm{D(2)}$-problem among $\F$-homology roses and $\F$-acyclic spaces.
 \end{abstract}

\maketitle

 \section{Introduction}
    Let $R$ be a commutative ring with a unit. An \emph{$R$-homology rose $B$ with $m$-petals} 
    ($m\geq 1$) is
     a topological space with
     $H_*(B;R) \cong H_*(\bigvee_m S^1;R)$ where $\bigvee_m S^1$ is a 
     bouquet of $m$ circles. 
     If $m=1$, $B$ is called
     an \emph{$R$-homology circle}.
     Our main concern is when $R$ is $\Z$ or a field $\F$. 
     When $R=\F_p = \Z\slash p\Z$ ($p$ is a prime),
      $B$ is called a \emph{mod-$p$ homology rose} (\emph{circle}).
     In addition,  an \emph{$R$-acyclic space} is a topological 
     space $B$ with 
     $\widetilde{H}_*(B;R) = 0$.
 \nn

    Many important spaces
      that occur in geometry and topology are $R$-homology roses. 
    For example, for any knot $N$ in a $3$-sphere $S^3$, its complement
    $S^3 \backslash N$ is a $\Z$-homology circle. \nn   
   
    \begin{thm} \label{Thm:Covering}
      Suppose $(\Z_p)^r$ acts cellularly and freely on a finite CW-complex $X$.
     Then the orbit space $X\slash (\Z_p)^r$ is a
      mod-$p$ homology rose if and only if $X$ is a mod-$p$ homology
      rose. 
    \end{thm}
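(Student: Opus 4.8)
The plan is to pass to cellular chains with $\F_p$-coefficients and exploit the very special local structure of the group algebra $A := \F_p[(\Z_p)^r]$. Since the action is free and cellular, the chain complex $C_* := C_*(X;\F_p)$ is a finite complex of finitely generated free $A$-modules (one free generator per cell of $X\slash(\Z_p)^r$), and the projection gives a natural identification $C_*(X\slash(\Z_p)^r;\F_p)\cong \F_p\otimes_A C_*$, where $\F_p = A\slash\mathfrak m$ is the residue field at the augmentation ideal $\mathfrak m$. Thus $H_*(X;\F_p)=H_*(C_*)$ while $H_*(X\slash(\Z_p)^r;\F_p)=H_*(\F_p\otimes_A C_*)$, and the whole problem becomes a statement about a single finite free $A$-complex.

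The key algebraic input is that $A\cong \F_p[s_1,\dots,s_r]\slash(s_1^p,\dots,s_r^p)$ (put $s_i=g_i-1$) is a local Artinian ring with residue field $\F_p$ and maximal ideal $\mathfrak m=(s_1,\dots,s_r)$, and moreover it is Gorenstein with one-dimensional socle spanned by $\sigma:=s_1^{p-1}\cdots s_r^{p-1}$, so that $\sigma\neq 0$ but $\mathfrak m\,\sigma=0$. Over a local ring a finite complex of free modules is chain-homotopy equivalent, as an $A$-complex, to a \emph{minimal} one $M_*$ (all differentials have entries in $\mathfrak m$); I will replace $C_*$ by such an $M_*$. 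Minimality has two payoffs: first $H_*(X;\F_p)\cong H_*(M_*)$, and second the differentials of $\F_p\otimes_A M_*$ vanish, so $\dim_{\F_p} H_i(X\slash(\Z_p)^r;\F_p)=\mathrm{rank}_A M_i$. Hence the mod-$p$ Betti numbers downstairs are exactly the ranks of the minimal complex.

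The bridge between the two sides is a socle estimate. If $N$ is the top degree with $M_N\neq 0$, then for $x\in M_N$ one has $d_N(\sigma x)=\sigma\, d_N(x)=0$, because $d_N(x)\in\mathfrak m M_{N-1}$ and $\mathfrak m\sigma=0$; since $\sigma M_N\neq 0$ and $M_{N+1}=0$, this yields $H_N(M_*)\supseteq \sigma M_N\neq 0$. Therefore the top nonvanishing degree of $H_*(X;\F_p)$ equals the length $N$ of $M_*$. Now both implications fall out. If $X$ is a mod-$p$ homology rose, then $\widetilde H_*(X;\F_p)$ is concentrated in degree $1$, so $N=1$, whence $M_i=0$ and $H_i(X\slash(\Z_p)^r;\F_p)=0$ for $i\geq 2$; as $X$ connected forces $X\slash(\Z_p)^r$ connected (so $H_0=\F_p$) and $M_1\neq 0$ forces at least one petal, the quotient is a rose. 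Conversely, if $X\slash(\Z_p)^r$ is a rose, the Betti-number computation gives $M_*=[\,0\to A^m\xrightarrow{d_1} A\to 0\,]$ with $m\geq 1$; thus $H_i(X;\F_p)=0$ for $i\geq 2$ automatically, and using $H_0(X;\F_p)=\F_p$ (i.e. $\mathrm{im}\,d_1=\mathfrak m$) the socle estimate gives $H_1(X;\F_p)=\ker d_1\supseteq \sigma A^m\neq 0$, so $X$ is a rose.

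The step I expect to be the real obstacle — and the only place connectivity enters — is the direction ``$X\slash(\Z_p)^r$ a rose $\Rightarrow X$ a rose'', where one must pin down $H_0(X;\F_p)$. Without assuming $X$ connected this fails: a disjoint union of $p^r$ circles, permuted freely and transitively by the regular action, has quotient a single circle (a rose) while the total space is disconnected, and here $M_*=[\,0\to A\xrightarrow{0} A\to 0\,]$ gives $H_0(X;\F_p)=A\neq\F_p$. So the argument genuinely requires (and the statement should assume) that $X$ is connected, equivalently that the covering $X\to X\slash(\Z_p)^r$ is connected; granting this, $\mathrm{coker}\,d_1\cong\F_p$ and the proof closes. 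The remaining points — existence of the minimal model $M_*$ and naturality of $C_*(X\slash(\Z_p)^r;\F_p)\cong\F_p\otimes_A C_*$ — are routine.
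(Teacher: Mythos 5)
Your proof is correct, and it takes a genuinely different route from the paper's. The paper first reduces to $r=1$ via a tower of $\Z_p$-covers and then treats the two directions by separate methods: for ``$K$ a rose $\Rightarrow X$ a rose'' it factors the cover through the maximal free abelian cover of $K$ (using Lemma~2.1, that $H_1(K)$ has no $p$-torsion) and runs the homology long exact sequences of the Milnor-type sequences $0\to C_*(X';\F_p)\xrightarrow{\,t-1\,}C_*(X';\F_p)\to C_*(K;\F_p)\to 0$, exploiting $t^p-1=(t-1)^p$ over $\F_p$; for the converse it uses the Leray--Serre spectral sequence of the Borel fibration together with the classification of indecomposable $\F_p[\Z_p]$-modules to pin down $\mathcal{H}^1(X;\F_p)\cong\mathcal{R}_1\oplus\mathcal{R}_p^{-\chi(K)}$. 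You instead handle all $r$ at once and both directions symmetrically, by passing to a minimal free complex $M_*$ over the local algebra $A=\F_p[(\Z_p)^r]$ and using the socle element $\sigma=s_1^{p-1}\cdots s_r^{p-1}$ (the image of the norm element $\sum_g g$): minimality identifies $b_i(K;\F_p)$ with $\mathrm{rank}_A M_i$, while the socle estimate gives $H_N(M_*)\supseteq\sigma M_N\neq 0$ in the top degree $N$, so you in fact prove the sharper equality $\mathrm{cd}_{\F_p}(X)=\mathrm{cd}_{\F_p}(K)$ --- precisely the Brown--Kahn corollary the paper quotes but deliberately avoids; your socle argument is an elementary chain-level avatar of their Tate-cohomology proof. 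What the paper's route buys that yours does not is the explicit $\F_p[\Z_p]$-module structure of $H^1(X;\F_p)$; what yours buys is uniformity in $r$, no spectral sequences or representation theory, and even the petal count for free, since rank-nullity on $0\to A^m\xrightarrow{d_1}A\to 0$ with $\mathrm{im}\,d_1=\mathfrak{m}$ gives $\dim_{\F_p}H_1(X;\F_p)=mp^r-(p^r-1)=(m-1)p^r+1$, matching the paper's Euler-characteristic computation. Your connectivity caveat is also well taken: the ``quotient rose $\Rightarrow$ total space rose'' direction fails literally for disconnected $X$ (your $p^r$ permuted circles), and the paper tacitly assumes connectivity at exactly this point, when it identifies $\pi_1(X)$ with the kernel of an \emph{epimorphism} $\sigma_X:\pi_1(K)\to\Z_p$; granting $X$ connected, your step $\mathrm{im}\,d_1=\mathfrak{m}$, hence $H_1(X;\F_p)=\ker d_1\supseteq\sigma A^m\neq 0$, closes the argument. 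The two points you defer are indeed routine: splitting off trivial summands $0\to A\xrightarrow{\,\cong\,}A\to 0$ by Gaussian elimination over the local ring, and the identification $C_*(K;\F_p)\cong\F_p\otimes_A C_*(X;\F_p)$, which rests on the standard fact that a free cellular action permutes the cells freely (a nontrivial element fixing a cell would have a fixed point by Smith theory), the same fact the paper uses to know $X\slash(\Z_p)^r$ is a finite CW-complex.
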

        
    \begin{rem}
     Theorem~\ref{Thm:Covering} actually follows from~\cite[Proposition 1.2]{BrownKahn77}.
  But the proof of Theorem~\ref{Thm:Brown} given in~\cite{BrownKahn77} uses some results from 
   Tate cohomology theory, which may not be so familiar to some people.  
   So in the following,
    we give a proof of Theorem~\ref{Thm:Covering} via
     some elementary tools of algebraic topology.    
    \end{rem}

  A group $G$ acting cellularly on a CW-complex $X$ means that $G$
    acts by homeomorphisms of $X$
   that map each cell homeomorphically to another cell. If moreover
   $G$ acts freely, then the orbit space $X\slash G$ is also a
   CW-complex and so
   we can think of $X$ as a regular covering space over $X\slash G$
   with deck transformation group $G$. So we have 
   another way to state Theorem~\ref{Thm:Covering}.
   \nn

  \noindent \textbf{Theorem~\ref{Thm:Covering}'.} A regular $(\Z_p)^r$-covering $X$ of a finite CW-complex $K$ is a mod-$p$ homology
  rose if and only if $K$ is a mod-$p$ homology rose.\nn
  
     \begin{rem}
     If we do not require the CW-complex $X$ in Theorem~\ref{Thm:Covering} to be 
     finite, 
     the conclusion of Theorem~\ref{Thm:Covering}
      may not be true. For example, $\Z_2$ can act freely 
     $X=S^{\infty} \times S^1$ with orbit space $\R P^{\infty} \times S^1$.
    \end{rem}
    \n
       For any topological space $X$ and any filed $\F$, let 
        $$b_i(X;\F) :=\dim_{\F} H_i(X;\F).$$
    Similarly, for any group $\Gamma$, let     
      $$ b_i(\Gamma;\F) = \dim_{\F} H_i(\Gamma;\F), \ \forall\, i\geq 0,$$
    where $H_i(\Gamma;\F)=H_i(K(\Gamma, 1);\F)$ is the group homology of $\Gamma$ with $\F$-coefficients.
   \n

    For an $\F$-homology rose $K$ with $m$ petals, it is easy to see that
     \begin{equation} \label{Equ:b_1}
       b_1(\pi_1(K);\F) =m,\  \ b_2(\pi_1(K);\F) = 0.
     \end{equation}

 \n

   For a regular covering space $\xi: X \rightarrow B$ with deck transformation
   group $G$, $\xi_*: \pi_1(X) \rightarrow \pi_1(B)$ maps $\pi_1(X)$ injectively into
   $\pi_1(B)$. And if we identify $\pi_1(X)$ with its image under $\xi_*$,
     we have $\pi_1(B)\slash \pi_1(X) \cong G$.
     If we assume that $B$ is a finite CW-complex 
     and a mod-$p$ homology rose, Theorem~\ref{Thm:Covering} tells us that
     any regular $(\Z_p)^r$-covering $X$ over $B$ is also a mod-$p$ homology rose. 
     So we have the following corollary.
\n

    \begin{cor} \label{Cor:Subnormal-Series}
       If a finite CW-complex $K$ is a mod-$p$ homology rose with $m$ petals, then
       there exists an
  infinitely long subnormal series of $\pi_1(K)$
   \begin{equation} \label{Equ:Subnormal-Series}
      \pi_1(K)=\Gamma_0 \rhd \Gamma_1 \rhd \cdots \rhd \Gamma_k \rhd \cdots 
    \end{equation}  
    where $\Gamma_i\slash \Gamma_{i+1} \cong \Z_p$ and $b_1(\Gamma_i;\F_p)=p^i(m-1)+1$, 
    $b_2(\Gamma_i;\F_p)=0$ for all $i\geq 0$.     
    \end{cor}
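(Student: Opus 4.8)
The plan is to build the series in \eqref{Equ:Subnormal-Series} inductively by forming a tower of regular $\Z_p$-coverings over $K$ and reading off the associated subgroups of $\pi_1(K)$. Set $K_0=K$ and $\Gamma_0=\pi_1(K)$. Suppose inductively that I have produced a finite CW-complex $K_i$ which is a mod-$p$ homology rose with $m_i=p^i(m-1)+1$ petals, together with an identification $\Gamma_i=\pi_1(K_i)$ sitting as a subgroup of $\pi_1(K)$. Since $m\geq 1$, I have $m_i\geq 1$, so by \eqref{Equ:b_1} the group satisfies $b_1(\Gamma_i;\F_p)=m_i\geq 1$. As $K_i$ is finite, $\Gamma_i$ is finitely generated, hence $H^1(\Gamma_i;\F_p)=\mathrm{Hom}(\Gamma_i,\Z_p)\neq 0$; any nonzero homomorphism $\phi_i\colon\Gamma_i\to\Z_p$ is automatically surjective because $\Z_p$ is simple.

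The homomorphism $\phi_i$ determines a connected regular $\Z_p$-covering $K_{i+1}\to K_i$, and I would set $\Gamma_{i+1}=\ker\phi_i$, so that $\Gamma_i\rhd\Gamma_{i+1}$ with $\Gamma_i/\Gamma_{i+1}\cong\Z_p$ and $\Gamma_{i+1}=\pi_1(K_{i+1})$. Because a $p$-fold cover of a finite complex is again finite, $K_{i+1}$ is a finite CW-complex, and viewing it as a regular $\Z_p$-covering of $K_i$ lets me apply Theorem~\ref{Thm:Covering}: since $K_i$ is a mod-$p$ homology rose, so is $K_{i+1}$. To pin down its number of petals $m_{i+1}$, I would use the multiplicativity of the Euler characteristic under finite covers together with the fact that a mod-$p$ homology rose with $\ell$ petals has Euler characteristic $1-\ell$ (computed from its $\F_p$-homology, which is independent of coefficients). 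This gives $1-m_{i+1}=\chi(K_{i+1})=p\,\chi(K_i)=p(1-m_i)$, whence $m_{i+1}=p(m_i-1)+1=p^{i+1}(m-1)+1$, completing the induction.

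Iterating this step indefinitely yields the infinite subnormal series \eqref{Equ:Subnormal-Series} with $\Gamma_i/\Gamma_{i+1}\cong\Z_p$. The asserted Betti numbers then follow at once: applying \eqref{Equ:b_1} to each mod-$p$ homology rose $K_i$ gives $b_1(\Gamma_i;\F_p)=b_1(\pi_1(K_i);\F_p)=m_i=p^i(m-1)+1$ and $b_2(\Gamma_i;\F_p)=0$.

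The one point deserving genuine care — the main obstacle to doing more than merely ``starting'' the construction — is showing that the tower never terminates, i.e. that a surjection $\Gamma_i\twoheadrightarrow\Z_p$ exists at \emph{every} stage. This is exactly what the petal count guarantees: since $m\geq 1$ forces $m_i=p^i(m-1)+1\geq 1$ for all $i$ (even in the homology-circle case $m=1$, where $m_i\equiv 1$), the first $\F_p$-Betti number of $\Gamma_i$ never drops to $0$, so a new $\Z_p$-covering can always be formed and the series continues forever.
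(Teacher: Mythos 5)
Your proof is correct and follows essentially the same route the paper takes: iterating Theorem~\ref{Thm:Covering} to build a tower of regular $\Z_p$-coverings, using the relation $\chi(K_{i+1})=p\,\chi(K_i)$ for a finite complex to compute the petal count $p^{i}(m-1)+1$, and reading off the Betti numbers from~\eqref{Equ:b_1}. Your explicit verification that $b_1(\Gamma_i;\F_p)\geq 1$ at every stage (so a surjection onto $\Z_p$ always exists and the series never terminates) is a detail the paper leaves implicit, and you handle it correctly.
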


    In addition, there is an easy
     algebraic criterion to judge whether a group can be realized as the 
      fundamental group of an $R$-homology rose.\nn
     
  \begin{prop} \label{Prop:Fund-Group}
    For any commutative ring $R$ with a unit, a group $\Gamma$ can be realized as
      the fundamental group of an $R$-homology rose with $m$ petals
      if and only if $H_1(\Gamma;R) = R^m$ and $H_2(\Gamma;R)=0$.
  \end{prop}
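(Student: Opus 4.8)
The plan is to prove the two implications separately: the necessity (``only if'') by comparison with an Eilenberg--MacLane space, and the sufficiency (``if'') by an explicit cell-by-cell construction. For necessity, suppose $\Gamma = \pi_1(B)$ for an $R$-homology rose $B$ with $m$ petals, which we may assume (after CW-approximation) to be a connected CW-complex, and let $c\colon B \to K(\Gamma,1)$ be a classifying map inducing the identity on $\pi_1$. Up to homotopy $K(\Gamma,1)$ is obtained from $B$ by attaching cells of dimension $\geq 3$ in order to kill $\pi_i$ for $i\geq 2$; since such cells leave cellular chains unchanged in degrees $\leq 1$ and only enlarge the image of $\partial_3$ in degree $2$, the map $c_*$ is an isomorphism $H_1(B;R)\xrightarrow{\cong} H_1(\Gamma;R)$ and an epimorphism $H_2(B;R)\twoheadrightarrow H_2(\Gamma;R)$ for every coefficient ring $R$. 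As $H_1(B;R)=R^m$ and $H_2(B;R)=0$ by hypothesis, this yields $H_1(\Gamma;R)=R^m$ and $H_2(\Gamma;R)=0$ at once.

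For sufficiency, assume $H_1(\Gamma;R)=R^m$ and $H_2(\Gamma;R)=0$, and begin with a presentation $2$-complex $Y$ of $\Gamma$ (one $0$-cell, one $1$-cell per generator, one $2$-cell per relator), so that $\pi_1(Y)=\Gamma$. Being $2$-dimensional, $Y$ has $H_i(Y;R)=0$ for $i\geq 3$, while $H_1(Y;R)=H_1(\Gamma;R)=R^m$ depends only on $\pi_1$; the sole group needing correction is $H_2(Y;R)=\ker(\partial_2)$. The key point is that the hypothesis $H_2(\Gamma;R)=0$ forces $H_2(Y;R)$ to be generated by spherical classes: comparing $Y$ with $K(\Gamma,1)$ as above, the $R$-submodule of $H_2(Y;R)$ generated by the images of $\pi_2(Y)$ has quotient exactly $H_2(\Gamma;R)=0$. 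I can therefore choose maps $f\colon S^2 \to Y$ whose Hurewicz images generate $H_2(Y;R)$ and attach $3$-cells along them; this keeps $\pi_1$ and $H_1$ intact and kills $H_2$. When $R=\F$ is a field (the case of primary interest, compare \eqref{Equ:b_1}) this finishes the job cleanly: $H_2(Y;\F)$ is a vector space, so choosing the maps $f$ so that their Hurewicz images form a basis makes the resulting $\partial_3$ injective, whence $B:=Y\cup(\text{$3$-cells})$ satisfies $H_2(B;\F)=0=H_3(B;\F)$ and is the desired $\F$-homology rose with $\pi_1(B)=\Gamma$.

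I expect the main obstacle to lie in the sufficiency direction over a general ring $R$. If $H_2(Y;R)$ is not a free $R$-module, then attaching $3$-cells to kill it creates a first syzygy as $H_3$, and one is forced to continue attaching cells of successively higher dimension so as to realize a free resolution of the $R$-module $H_2(Y;R)$, making all higher homology vanish only in the colimit. The delicate point is that, in contrast to the initial $H_2$, these higher syzygy classes need not be spherical once $\pi_1(Y)\neq 1$, so killing each successive stage requires additional care. This difficulty disappears precisely when $R$ is a field (or, more generally, a PID), since then $H_2(Y;R)\subseteq C_2$ is automatically free and the construction terminates after a single layer of $3$-cells; I regard verifying that $H_2(\Gamma;R)=0$ is exactly the condition making the first obstruction spherical as the conceptual heart of the proof.
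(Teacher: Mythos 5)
Your proposal follows essentially the same route as the paper: the necessity via the classifying map is the same content as Hopf's sequence~\eqref{Equ:Hopf-1} (together with $H_1(B;R)\cong H_1(\pi_1(B);R)$, which is how the paper argues via Lemma~\ref{Lem:H_1}), and the sufficiency --- tensor the Hopf sequence with $R$ as in~\eqref{Equ:Hopf-2}, conclude that spherical classes $R$-generate $H_2$ of a $2$-complex $Y$ with $\pi_1(Y)\cong\Gamma$, and attach $3$-cells --- is exactly the paper's construction, and your field case is correct and complete. The problem is that the proposition is asserted for \emph{every} commutative ring $R$ with unit, and your final paragraph concedes that case rather than closing it; moreover, your diagnosis of where the difficulty lies is wrong. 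The hypothesis $H_2(\Gamma;R)=0$ contains two pieces of information: $H_2(\Gamma)\otimes R=0$ \emph{and} $\mathrm{Tor}_{\Z}(H_1(\Gamma);R)=0$. Since the cellular chain groups are free abelian, the universal coefficient theorem holds with coefficients in any abelian group, and since $Y$ is $2$-dimensional, $H_2(Y)=\ker\partial_2$ is free abelian; the vanishing of the Tor term then gives $H_2(Y;R)\cong H_2(Y)\otimes_{\Z}R$, a \emph{free} $R$-module for every commutative $R$ --- this is precisely the sentence in the paper's proof ``Since $H_2(K)$ is clearly a free abelian group, $H_2(K;R)$ is a free $R$-module.'' So the infinite tower of non-spherical syzygies you fear never starts, and no PID hypothesis is needed for freeness.

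What your argument genuinely leaves open (and what your PID remark does not repair, since over a PID a generating set of a free module need not contain a basis --- e.g.\ $\{2,3\}\subseteq\Z$) is how to make $\partial_3\otimes R$ injective, not merely surjective onto the $2$-cycles; attaching along an arbitrary spherical generating set can create $H_3(B;R)\neq 0$, and an $R$-homology rose must have vanishing $H_3$. The clean fix is to work integrally: the Hurewicz image $S=h(\pi_2(Y))\subseteq H_2(Y)$ is free abelian, so one may attach the $3$-cells along a $\Z$-basis of $S$. Then $H_2(B;R)=0$ because the cokernel of $S\otimes R\to H_2(Y)\otimes R$ is $H_2(\Gamma)\otimes R=0$, while $H_3(B;R)\cong\mathrm{Tor}_{\Z}(H_2(\Gamma);R)$. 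This Tor term vanishes trivially for $R=\Z$, and whenever $H_2(\Gamma)$ is finitely generated --- in particular for finitely presentable $\Gamma$, the case the paper emphasizes --- because $H_2(\Gamma)\otimes R=0$ then forces each torsion order of $H_2(\Gamma)$ to be a unit in $R$. (For wildly infinitely generated $H_2(\Gamma)$, e.g.\ a Pr\"ufer group with $R=\F_p$, this term can survive; the paper's ``it is easy to check'' glosses exactly this point, and there one must fall back on your basis-extraction, which is available over a field.) With this observation your construction terminates after a single layer of $3$-cells over any commutative ring, matching the paper.
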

  
   Indeed, we can show that for a finitely presentable group $\Gamma$ with 
   $H_1(\Gamma;R) = R^m$ and $H_2(\Gamma;R)=0$,
   there always exists a finite 
   $3$-complex $B$ with fundamental group $\Gamma$ and $B$ is an $R$-homology rose.
   But generally speaking, we can not choose $B$ to be $2$-dimensional. 
   Actually, for any field $\F$,
   there is an obstruction (called the \emph{$\F$-gap} of $\Gamma$)
   to the existence of a $2$-dimensional $\F$-homology 
   rose $B$ with $\pi_1(B)\cong \Gamma$. We will study the properties of
   the $\F$-gap of a group and then use this notion to
   study Wall's $\mathrm{D(2)}$-problem among $\F$-homology roses and $\F$-acyclic spaces. 
   \n
   From Theorem~\ref{Thm:Covering} and Proposition~\ref{Prop:Fund-Group}, we obtain the following
   theorem.\n
   
   \begin{thm} \label{thm:Carlsson}
  Let $B$ be a connected finite CW-complex with $H_2(\pi_1(B);\F_p)=0$.
     Then for any regular $(\Z_p)^r$-covering $X$ over $B$, we have
     $b_1(X;\F_p) \geq 2^r-1$.
   \end{thm}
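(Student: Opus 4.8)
The plan is to reduce the computation of $b_1(X;\F_p)$ to a statement about fundamental groups, and then to replace $B$ by a finite $\F_p$-homology rose having the same fundamental group, so that Theorem~\ref{Thm:Covering} becomes applicable. Write $\Gamma=\pi_1(B)$ and $m=b_1(\Gamma;\F_p)$. Taking $X$ connected, the regular $(\Z_p)^r$-covering $\xi\colon X\to B$ is classified by a surjection $\phi\colon \Gamma \twoheadrightarrow (\Z_p)^r$ with $\pi_1(X)\cong\ker\phi$. Since the first $\F_p$-homology of any connected space coincides with that of its fundamental group (Hurewicz theorem together with universal coefficients), we have $b_1(X;\F_p)=b_1(\ker\phi;\F_p)$, so it suffices to compute the latter.

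First I would build a convenient model. Because $B$ is a finite CW-complex, $\Gamma$ is finitely presentable; moreover $H_1(\Gamma;\F_p)=\F_p^m$ automatically, while $H_2(\Gamma;\F_p)=0$ by hypothesis. Hence, by Proposition~\ref{Prop:Fund-Group} and the discussion following it, there is a finite ($3$-dimensional) $\F_p$-homology rose $K$ with $m$ petals and $\pi_1(K)\cong\Gamma$. Transporting $\phi$ across this isomorphism yields a connected regular $(\Z_p)^r$-covering $\widetilde{K}\to K$ with $\pi_1(\widetilde{K})\cong\ker\phi$. As $K$ is finite, Theorem~\ref{Thm:Covering} shows that $\widetilde{K}$ is again an $\F_p$-homology rose, say with $m'$ petals. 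Combining this with the reduction above gives $b_1(X;\F_p)=b_1(\ker\phi;\F_p)=b_1(\widetilde{K};\F_p)=m'$.

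Next I would pin down $m'$ by an Euler-characteristic count, exactly as in Corollary~\ref{Cor:Subnormal-Series}. An $\F_p$-homology rose with $j$ petals has $\chi=1-j$, and the covering $\widetilde{K}\to K$ has degree $p^r$, so
\[
1-m'=\chi(\widetilde{K})=p^r\,\chi(K)=p^r(1-m),
\]
whence $m'=p^r(m-1)+1$, and therefore $b_1(X;\F_p)=p^r(m-1)+1$. Finally I would deduce the stated bound: applying $H_1(-;\F_p)$ to the surjection $\phi$ produces a surjection $\F_p^m\twoheadrightarrow\F_p^r$, so $m\geq r$. If $m\geq 2$, then $p^r(m-1)+1\geq p^r+1\geq 2^r+1>2^r-1$ because $p\geq 2$; and if $m\leq 1$, then $m\geq r$ forces $r\leq 1$, in which case $p^r(m-1)+1\geq 2^r-1$ is verified directly. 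This yields $b_1(X;\F_p)\geq 2^r-1$.

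I expect the only genuinely delicate point to be the replacement of the arbitrary finite complex $B$ by the finite homology rose $K$. The space $X$ itself is in general not a homology rose, so Theorem~\ref{Thm:Covering} cannot be applied to $\xi\colon X\to B$ directly; the argument instead hinges on the fact that $b_1(\,\cdot\,;\F_p)$ is an invariant of the fundamental group alone, which allows the computation to be transferred from $X$ to the homology-rose covering $\widetilde{K}$. Once this transfer is in place, the remaining steps are the routine Euler-characteristic identity and the elementary inequality.
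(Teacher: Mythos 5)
Your proposal is correct and follows essentially the same route as the paper's proof: reduce $b_1(X;\F_p)$ to $b_1(\pi_1(X);\F_p)$, replace $B$ by a finite mod-$p$ homology rose with the same fundamental group via Proposition~\ref{Prop:Fund-Group}, apply Theorem~\ref{Thm:Covering} to the transported covering, and conclude with the Euler-characteristic identity $b_1 = p^r(m-1)+1$ together with $m\geq r$. The only cosmetic difference is the final case split ($m\geq 2$ versus $m\leq 1$ in your write-up, $r=1$ versus $r\geq 2$ in the paper), and your explicit remark that $X$ itself need not be a homology rose is exactly the delicate point the paper's proof also navigates.
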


  \begin{cor} \label{Cor:Carlsson}
    Let $B$ be a connected finite CW-complex with $H_2(B;\F_p)=0$. Then
     for any regular $(\Z_p)^r$-covering $X$ over $B$, we have
     $\sum^{\infty}_{i=0} b_i(X;\F_p) \geq 2^r$.
  \end{cor}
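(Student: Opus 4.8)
The plan is to deduce this Corollary directly from Theorem~\ref{thm:Carlsson}, which I may assume. The two statements differ only in their hypothesis: Theorem~\ref{thm:Carlsson} assumes $H_2(\pi_1(B);\F_p)=0$ (a condition on the group), while the Corollary assumes the a~priori different condition $H_2(B;\F_p)=0$ (a condition on the space). So the first and main step is to show that $H_2(B;\F_p)=0$ forces $H_2(\pi_1(B);\F_p)=0$; once this is done, the estimate on the total Betti number is just bookkeeping.

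The key point, then, is the existence of a natural surjection $H_2(B;\F_p)\twoheadrightarrow H_2(\pi_1(B);\F_p)$. I would obtain it by comparing $B$ with an Eilenberg--MacLane space, which keeps the argument within the elementary tools favored in this paper. Writing $\pi=\pi_1(B)$, one constructs a model of $K(\pi,1)$ from $B$ by attaching cells of dimension $\geq 3$ to kill the higher homotopy groups $\pi_2(B),\pi_3(B),\dots$ The resulting inclusion $j\colon B\hookrightarrow K(\pi,1)$ induces an isomorphism on $\pi_1$, and since every attached cell has dimension $\geq 3$, the relative groups satisfy $H_k(K(\pi,1),B;\F_p)=0$ for $k\leq 2$. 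The long exact sequence of the pair then shows that $j_*\colon H_2(B;\F_p)\to H_2(K(\pi,1);\F_p)=H_2(\pi;\F_p)$ is surjective. Hence $H_2(B;\F_p)=0$ implies $H_2(\pi_1(B);\F_p)=0$, and Theorem~\ref{thm:Carlsson} becomes applicable. (This surjectivity is the classical Hopf phenomenon; it is the only place where genuine topology enters, so I expect it to be the one point requiring care, the rest being formal.)

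Finally, since $X$ is a connected regular covering of the connected complex $B$, we have $b_0(X;\F_p)=1$, while Theorem~\ref{thm:Carlsson} yields $b_1(X;\F_p)\geq 2^r-1$. Therefore
$$
\sum_{i=0}^{\infty} b_i(X;\F_p)\;\geq\; b_0(X;\F_p)+b_1(X;\F_p)\;\geq\; 1+(2^r-1)=2^r,
$$
which is the desired inequality. The main obstacle is confined to the surjectivity $H_2(B;\F_p)\twoheadrightarrow H_2(\pi_1(B);\F_p)$ in the first step; everything after that reduces to adding the contribution of $H_0$ to the bound already supplied by Theorem~\ref{thm:Carlsson}.
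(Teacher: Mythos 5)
Your proof is correct and takes essentially the same route as the paper: both reduce to Theorem~\ref{thm:Carlsson} by showing that $H_2(B;\F_p)=0$ forces $H_2(\pi_1(B);\F_p)=0$ via the Hopf phenomenon, and then add the contribution $b_0(X;\F_p)=1$. The only cosmetic difference is that you establish the surjection $H_2(B;\F_p)\twoheadrightarrow H_2(\pi_1(B);\F_p)$ directly with $\F_p$-coefficients by attaching cells of dimension $\geq 3$ to build a $K(\pi_1(B),1)$, whereas the paper tensors the integral Hopf sequence~\eqref{Equ:Hopf-1} with $\F_p$ (see~\eqref{Equ:Hopf-2}) and uses the universal coefficient theorem to dispose of the $\mathrm{Tor}$ term.
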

  
   The above result gives supporting evidences for the 
   Carlsson conjecture which is an original motivation for our paper. Recall
   that the \emph{Carlsson conjecture} claims that 
   if $(\Z_p)^r$ can act freely 
    on a finite CW-complex $X$, then  
       $\sum^{\infty}_{i=0} b_i(X;\F_p) \geq 2^r$.
   In particular, if the free $(\Z_p)^r$-action on $X$ is cellular, 
   the orbit space $X\slash (\Z_p)^r$
    is also a finite CW-complex and $X$ is a regular $(\Z_p)^r$-covering 
    over $X\slash (\Z_p)^r$. 
   The Carlsson conjecture is also called the \emph{toral rank conjecture} in some literature and 
   remains open so far. The 
   reader is referred to~\cite{Adem04, AllPupp93, Hanke09} for more information on the 
 Carlsson conjecture.
       \n
       
  In addition, from Proposition~\ref{Prop:Fund-Group} 
  we can classify all the finitely generated abelian groups that occur as the fundamental
  groups of mod-$p$ homology roses or mod-$p$ acyclic spaces 
  (Proposition~\ref{prop:Homology-Rose-Abelian} and Proposition~\ref{Prop:Acyclic-1}). 
  Moreover, we show 
  in Corollary~\ref{Cor:Homology-Rose-S1} and Corollary~\ref{Cor:Mod-Acyclic-Fund}
    that if a finite $2$-complex $K$ is a mod-$p$ homology rose (or mod-$p$ acyclic space)
    with abelian fundamental group, then $K$
    is homotopy equivalent to $S^1$ (or a pseudo-projective plane).\n

      The paper is organized as follows. In section 2, we 
      give an elementary proof of Theorem~\ref{Thm:Covering}.
     In section 3, we investigate the properties of the fundamental groups of 
     $R$-homology roses and prove Proposition~\ref{Prop:Fund-Group} and 
     Theorem~\ref{thm:Carlsson}.
     In particular, we determine which finitely generated 
     abelian groups can be realized as 
     the fundamental groups of mod-$p$ homology roses and mod-$p$ acyclic spaces. 
     In section 4, we introduce the notion of $\F$-gap for a finitely presentable group
     and study some of its properties.  
    This notion is a generalization of  
     the efficiency of a group and is related some other well known concepts. 
     In section 5, we discuss how to 
     search candidates for the 
     counterexamples of Wall's $\mathrm{D(2)}$-problem
     among $\F$-homology roses.  Then in section 6 and section 7,
      we practice the search from the fundamental groups of 
     closed 
     $3$-manifolds and from finite groups with trivial multiplicator and negative deficiency.     
    \n

     By abuse of terminology,  
     we think of any $1$-dimensional CW-complex in this paper as a $2$-dimensional 
     CW-complex without $2$-cells. In addition, if no coefficients 
     are specified, homology and cohomology groups are with $\Z$-coefficients.\\

 \section{$(\Z_p)^r$-covering spaces of mod-$p$ homology roses}
  
    \begin{lem}\label{Lem:H_1}
     If $K$ is a mod-$p$ homology rose with $m$ petals, then 
     $H_1(K;\Z) \cong \Z^m\oplus T$ where $T$ is a finite abelian group without $p$-torsion. 
     \end{lem}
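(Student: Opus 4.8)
The plan is to read off the structure of $H_1(K;\Z)$ directly from the mod-$p$ homology data by means of the Universal Coefficient Theorem (UCT) in degrees $1$ and $2$. By definition of a mod-$p$ homology rose with $m$ petals we have $H_0(K;\F_p)\cong \F_p$, $H_1(K;\F_p)\cong \F_p^m$, and $H_i(K;\F_p)=0$ for $i\geq 2$. In particular $H_0(K;\F_p)\cong\F_p$ shows that $K$ is connected, so $H_0(K;\Z)\cong \Z$ is free.

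First I would invoke the (non-naturally split) short exact sequence
\[
0\to H_n(K;\Z)\otimes \F_p \to H_n(K;\F_p)\to \mathrm{Tor}(H_{n-1}(K;\Z),\F_p)\to 0
\]
for $n=1$ and $n=2$. For $n=1$ the Tor term vanishes because $H_0(K;\Z)\cong\Z$ is torsion-free, giving $H_1(K;\Z)\otimes \F_p \cong H_1(K;\F_p)\cong \F_p^m$. For $n=2$ the left-hand term is irrelevant, and the hypothesis $H_2(K;\F_p)=0$ forces $\mathrm{Tor}(H_1(K;\Z),\F_p)=0$. Since for any abelian group $A$ the group $\mathrm{Tor}(A,\F_p)$ is exactly the $p$-torsion subgroup $A[p]$ (apply $A\otimes-$ to $0\to\Z\xrightarrow{p}\Z\to\F_p\to 0$), this says precisely that $H_1(K;\Z)$ contains no $p$-torsion.

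Next I would combine these two facts via the classification of finitely generated abelian groups. Writing $H_1(K;\Z)\cong \Z^s\oplus T$ with $T$ finite, the absence of $p$-torsion means $p\nmid |T|$, so multiplication by $p$ is an automorphism of $T$ and hence $T\otimes \F_p=0$. Therefore $H_1(K;\Z)\otimes \F_p\cong \F_p^s$, and comparison with the degree-$1$ computation above forces $s=m$. This delivers $H_1(K;\Z)\cong \Z^m\oplus T$ with $T$ finite and free of $p$-torsion, as claimed.

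The one point demanding care — and the main potential obstacle — is the appeal to finite generation of $H_1(K;\Z)$: the two homological conditions alone do not force it, since, e.g., $\Z[1/q]$ with $q\neq p$ prime satisfies both with $m=1$ yet is not of the asserted form. I would therefore invoke the standing setting of the paper (finite CW-complexes, hence finitely generated integral homology), under which the structure theorem applies and the argument closes; the UCT bookkeeping itself is routine.
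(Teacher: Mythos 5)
Your proof is correct and follows essentially the same route as the paper's: the Universal Coefficient Theorem in degrees $1$ and $2$, with $H_2(K;\F_p)=0$ killing the $p$-torsion of $H_1(K;\Z)$ and $H_1(K;\Z)\otimes\F_p\cong\F_p^m$ fixing the free rank (the paper uses the cohomology UCT with $\mathrm{Ext}(H_1(K);\F_p)$ where you use the homology UCT with $\mathrm{Tor}(H_1(K);\F_p)$, an immaterial variant). Your explicit remark that finiteness of the CW-complex is needed for the structure theorem to apply is a point the paper leaves implicit, and it is well taken.
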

  \begin{proof}
   By the universal coefficient theorem,
     $$H_2(K;\F_p)\cong H^2(K;\F_p) 
     = \mathrm{Hom}(H_2(K);\F_p) \oplus \mathrm{Ext}(H_1(K);\F_p) $$
       Then $H_2(K;\F_p)=0$ implies that $H_1(K;\Z)$ has no $p$-torsion.
        In addition, since $H_1(K; \F_p)  = H_1(K;\Z)\otimes \F_p \cong (\F_p)^m$,
        the free rank of $H_1(K;\Z)$ must equal $m$.
  \end{proof}
  \nn
 
  Before giving a proof of Theorem~\ref{Thm:Covering}, we want to acknowledge that
 Theorem~\ref{Thm:Covering} is also a corollary of the following result in~\cite{BrownKahn77}.
  \nn
  \begin{thm}[Proposition 1.2 of~\cite{BrownKahn77}] \label{Thm:Brown}
 Let $\widetilde{B} \rightarrow B$ be a finite sheeted regular covering
  with deck transformation group $G$, where $B$ is a connected CW-complex and $\mathrm{cd}(B) < \infty$.
 Let $M$ be a $\pi_1(B)$-module and $n$ an integer such that
 $H^i(\widetilde{B}; M)=0$ for $i>n$. Then
 $H^i(B; M)=0$ for $i>n$, and $\tau: H^n(\widetilde{B}; M)_G \rightarrow H^n(B; M)$
 is an isomorphism.
 \end{thm}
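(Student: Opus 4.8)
The plan is to pass to homological algebra over the finite group $G$, reduce the two assertions to a single statement about the cohomological triviality of a high syzygy module, and invoke $\mathrm{cd}(B)<\infty$ exactly at that point. Write $\pi=\pi_1(B)$, let $\widetilde\pi=\pi_1(\widetilde B)\trianglelefteq\pi$ with $\pi/\widetilde\pi\cong G$, and let $E\to B$ be the universal cover, so that $C_*=C_*(E)$ is a complex of free $\Z\pi$-modules and $E\to\widetilde B$ is simultaneously the universal cover of $\widetilde B$. Put $D^*=\mathrm{Hom}_{\Z\widetilde\pi}(C_*,M)$; then $H^*(D^*)=H^*(\widetilde B;M)$, the rule $(g\cdot\phi)(c)=g\,\phi(g^{-1}c)$ makes $D^*$ a complex of $\Z G$-modules with $(D^*)^G=\mathrm{Hom}_{\Z\pi}(C_*,M)$, so that $H^*((D^*)^G)=H^*(B;M)$, and the geometric transfer is the norm $N=\sum_{g\in G}g\colon D^*\to(D^*)^G$. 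Because $C_q$ is $\Z\pi$-free and $\Z\pi$ is $\Z\widetilde\pi$-free of rank $|G|$, each $D^q$ is coinduced, hence a cohomologically trivial $\Z G$-module; consequently $\widehat H^0(G;D^q)=\widehat H^{-1}(G;D^q)=0$ and the norm $\overline N\colon(D^q)_G\to(D^q)^G$ is an isomorphism for every $q$. Thus $\overline N$ is an isomorphism of complexes and $H^*(B;M)\cong H^*((D^*)_G)$.

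Next I would exploit the hypothesis $H^q(\widetilde B;M)=0$ for $q>n$. It says exactly that $D^{n+1}\to D^{n+2}\to\cdots$ is exact, so, writing $A:=\ker(d^{n+1})$, the sequence $0\to A\to D^{n+1}\to D^{n+2}\to\cdots$ is an acyclic coresolution of the syzygy $A$ by cohomologically trivial modules. Standard dimension-shifting in Tate cohomology along this coresolution, combined with the identity $H^*(B;M)\cong H^*((D^*)_G)$ from the first step, yields natural isomorphisms $H^q(B;M)\cong\widehat H^{\,q-n-1}(G;A)$ for all $q>n$. In particular the whole range $q>n$ is governed by the Tate cohomology of the single module $A$.

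The crux is to show that $A$ is cohomologically trivial, i.e. $\widehat H^s(G;A)=0$ for all $s$; this is the one place the hypothesis $\mathrm{cd}(B)<\infty$ is used. For $q\gg0$ the Tate group $\widehat H^{q-n-1}(G;A)$ is the ordinary group $H^{q-n-1}(G;A)$, and the displayed isomorphism identifies it with $H^q(B;M)$, which vanishes once $q>\mathrm{cd}(B)$; hence $\widehat H^s(G;A)=0$ for all large $s$. Running the same construction over each Sylow subgroup $G_p\le G$ — that is, over the intermediate covering $X_p\to B$ with deck group $G_p$, whose cohomological dimension is again finite — gives $\widehat H^s(G_p;A)=0$ for all large $s$, in particular in two consecutive degrees. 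By the Nakayama--Rim criterion this forces $A$ to be cohomologically trivial over each $G_p$, and therefore over $G$. Consequently $\widehat H^s(G;A)=0$ for every $s$, so $H^q(B;M)=0$ for all $q>n$, which is the first conclusion. For the second, the cohomological triviality of $A$ (and hence of the successive cokernels $B^{n+2},B^{n+3},\dots$ in the coresolution) makes all the $H_1(G;-)$ correction terms vanish in the degree-$n$ analysis of $(D^*)_G$; a short diagram chase then identifies the coinvariants edge map $H^n(\widetilde B;M)_G\to H^n((D^*)_G)$ as an isomorphism, and composing with the isomorphism $\overline N_*\colon H^n((D^*)_G)\to H^n(B;M)$ recovers $\tau$, which is therefore an isomorphism.

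The main obstacle is this upgrade from eventual vanishing to genuine cohomological triviality of the syzygy $A$: high-degree vanishing of $\widehat H^*(G;A)$ alone does not suffice for a general finite group, and one must feed in the finiteness of the cohomological dimension of every intermediate cover and apply the Nakayama--Rim criterion over Sylow subgroups. The remaining ingredients — the norm isomorphism $(D^q)_G\cong(D^q)^G$, the dimension-shifting isomorphism, and the degree-$n$ diagram chase — are routine once this structural point is in place, though care is needed to check that the algebraic norm really is the geometric transfer and that the final edge map is $\tau$ on the nose.
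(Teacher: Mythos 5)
Your proof is correct, but note that the paper itself contains no proof of Theorem~\ref{Thm:Brown}: it is quoted from \cite[Proposition 1.2]{BrownKahn77}, and the remark following Theorem~\ref{Thm:Covering} says explicitly that the original proof ``uses some results from Tate cohomology theory'' --- precisely the machinery you deploy --- which the authors deliberately bypass by proving only the special case Theorem~\ref{Thm:Covering} via an elementary Leray--Serre spectral sequence argument. So your write-up is best read as a reconstruction of the Brown--Kahn argument rather than of anything proved in this paper, and it checks out: the cochain modules $D^q=\mathrm{Hom}_{\Z\widetilde\pi}(C_q,M)$ are indeed coinduced $\Z G$-modules (a possibly infinite product of copies of $\mathrm{Hom}_{\Z\widetilde\pi}(\Z\pi,M)\cong\mathrm{Hom}_{\Z}(\Z G,M)$, and a product of coinduced modules is coinduced), so the norm gives the isomorphism of complexes $(D^*)_G\cong(D^*)^G$; the dimension shift $H^q(B;M)\cong\widehat H^{\,q-n-1}(G;A)$ holds for all $q>n$, though at $q=n+1$ it needs the additional sequence $0\to Z^n\to D^n\to A\to 0$ together with $\widehat H^0(G;D^n)=\widehat H^1(G;D^n)=0$, not merely the coresolution to the right of $A$; your one unproved assertion --- that the intermediate covers $X_p=\widetilde B\slash G_p$ again have finite cohomological dimension --- follows from Shapiro's lemma, $H^i(X_p;M')\cong H^i(B;\mathrm{Coind}_{\pi_p}^{\pi}M')$, whence $\mathrm{cd}(X_p)\leq\mathrm{cd}(B)$; and you correctly locate the crux: eventual vanishing of $\widehat H^*(G;A)$ alone does not force cohomological triviality, so one must obtain eventual vanishing over each Sylow subgroup, apply the two-consecutive-degrees criterion for $p$-groups, and descend to $G$ using that restriction is injective on $p$-primary parts; the final identification of the coinvariants edge map with $\tau$ works as you sketch, since right-exactness of coinvariants plus the vanishing of $H_1(G;D^n)$, $H_1(G;A)$ and $H_1(G;Z^{n+2})$ yields $H^n(D^*)_G\cong H^n((D^*)_G)$. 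The trade-off against the paper's route is clear: your (Brown--Kahn) argument delivers the full statement --- arbitrary finite deck group $G$, arbitrary local coefficients $M$, arbitrary $n$, assuming only $\mathrm{cd}(B)<\infty$ --- whereas the paper's elementary proof of Theorem~\ref{Thm:Covering} covers only $G=(\Z_p)^r$ (reduced to $\Z_p$ by a tower of coverings), constant $\F_p$-coefficients, finite complexes and $n=1$, in exchange for using nothing beyond the Borel-fibration spectral sequence and the modular representation theory of $\F_p[\Z_p]$.
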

  Here $\mathrm{cd}(B) := \mathrm{sup}\{ i\in \Z \, ; \, H^i(B; M) \neq 0 \ \text{for some}\ 
      \pi_1(B)\text{-module}\ M\}$, called the \emph{cohomological dimension} of $B$.
      By the work of Wall (\cite{Wall65,Wall66}), $\mathrm{cd}(B)<\infty$ implies that
      $B$ is homotopy equivalent to a CW-complex of finite dimension.\nn
      
      In addition, for any commutative ring $R$, we define
       $$\mathrm{cd}_{R}(B) := \mathrm{sup}\{ i\in \Z \, ; \, H^i(B;R) \neq 0 \},$$
       called
       the \emph{$R$-cohomological dimension} of $B$. It is clear that $\mathrm{cd}_{R}(B)\leq 
       \mathrm{cd}(B)$.  \n
       
    In the above theorem, let $M=\F_p$ (as a trivial $\pi_1(B)$-module). We obtain
    $$\mathrm{cd}_{\F_p}(\widetilde{B}) \geq \mathrm{cd}_{\F_p}(B).$$
     Moreover,      
       for any $\F_p[(\Z_p)^r]$-module $L$, 
       the co-invariant $L_{(\Z_p)^r} \neq \varnothing$ if and only if 
       $L\neq 0$ (see \cite[p.149]{Brown82}).
        So if $\widetilde{B}$ is a regular $(\Z_p)^r$-covering of $B$,
          the last sentence of Theorem~\ref{Thm:Brown} implies that
         $\mathrm{cd}_{\F_p}(\widetilde{B}) = \mathrm{cd}_{\F_p}(B)$. So we actually obtain \nn
    
     \begin{cor}
       If $\widetilde{B} \rightarrow B$ is a regular $(\Z_p)^r$-covering space with
       $\mathrm{cd}_{\F_p}(B) < \infty$, then $\mathrm{cd}_{\F_p}(\widetilde{B}) 
        = \mathrm{cd}_{\F_p}(B)$.
      \end{cor}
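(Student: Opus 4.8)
The plan is to prove the two inequalities $\mathrm{cd}_{\F_p}(\widetilde B)\ge \mathrm{cd}_{\F_p}(B)$ and $\mathrm{cd}_{\F_p}(\widetilde B)\le \mathrm{cd}_{\F_p}(B)$ separately, in both cases specializing Theorem~\ref{Thm:Brown} to the trivial coefficient module $M=\F_p$ and writing $G=(\Z_p)^r$. Before doing so it is worth recording why the naive transfer argument is useless here: for a finite covering of degree $d$ the composite of restriction and transfer on $H^*(-;\F_p)$ is multiplication by $d$, and since $d=p^r\equiv 0\pmod p$ this composite vanishes identically. It is exactly this failure that forces one to appeal to the finer conclusion of Theorem~\ref{Thm:Brown}, whose validity depends on $G$ being a $p$-group and on the coefficients being taken in $\F_p$.

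For the easy inequality I would observe that if $\mathrm{cd}_{\F_p}(\widetilde B)=\infty$ there is nothing to prove, while if $n:=\mathrm{cd}_{\F_p}(\widetilde B)<\infty$ then $H^i(\widetilde B;\F_p)=0$ for all $i>n$, so the first assertion of Theorem~\ref{Thm:Brown} gives $H^i(B;\F_p)=0$ for all $i>n$ and hence $\mathrm{cd}_{\F_p}(B)\le n=\mathrm{cd}_{\F_p}(\widetilde B)$. For the reverse inequality I would set $n:=\mathrm{cd}_{\F_p}(\widetilde B)$, assume it is finite, and apply the top-degree part of Theorem~\ref{Thm:Brown}, which provides an isomorphism $\tau\colon H^n(\widetilde B;\F_p)_G\to H^n(B;\F_p)$. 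Since $L:=H^n(\widetilde B;\F_p)$ is nonzero by the choice of $n$, it then suffices to know that its coinvariants $L_G$ are nonzero. This is the crucial algebraic input and can be seen directly: because $G$ is a finite $p$-group, $\F_p[G]$ is local Artinian and its augmentation ideal $I_G$ is nilpotent, say $I_G^N=0$; thus $L_G=L/I_GL=0$ would force $L=I_GL=\cdots=I_G^NL=0$, contradicting $L\ne 0$. Consequently $H^n(B;\F_p)\cong L_G\ne 0$, so $\mathrm{cd}_{\F_p}(B)\ge n=\mathrm{cd}_{\F_p}(\widetilde B)$, and combining the two inequalities gives the desired equality.

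The main obstacle is the reverse inequality, and within it the two points that genuinely use the hypotheses: the nonvanishing of $L_G$ for a nonzero $\F_p[G]$-module (which rests on $G$ being a $p$-group and the coefficients being $\F_p$), and the requirement that $n=\mathrm{cd}_{\F_p}(\widetilde B)$ be finite so that a top cohomological degree exists and the isomorphism $\tau$ can be invoked. I would handle the latter with care, deducing finiteness of $\mathrm{cd}_{\F_p}(\widetilde B)$ from the hypothesis by way of Wall's theorem (a space of finite cohomological dimension is homotopy equivalent to a finite-dimensional complex, a property inherited by finite covers), which is precisely the finiteness condition under which Theorem~\ref{Thm:Brown} was stated.
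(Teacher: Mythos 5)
Your main argument is exactly the paper's: both inequalities are extracted from Theorem~\ref{Thm:Brown} with the trivial module $M=\F_p$ --- the vanishing assertion gives $\mathrm{cd}_{\F_p}(B)\le \mathrm{cd}_{\F_p}(\widetilde{B})$, and the isomorphism $\tau\colon H^n(\widetilde{B};\F_p)_G\to H^n(B;\F_p)$ at $n=\mathrm{cd}_{\F_p}(\widetilde{B})$, combined with the fact that a nonzero $\F_p[G]$-module has nonzero coinvariants when $G$ is a $p$-group, gives the reverse. Your proof of the coinvariants fact via nilpotence of the augmentation ideal $I_G$ is precisely the content behind the paper's citation of~\cite[p.149]{Brown82}, and it is correct as written (note it applies to $L$ of arbitrary dimension, since the nilpotence exponent $N$ depends only on $G$); the opening remark about the restriction--transfer composite being multiplication by $p^r\equiv 0$ is a sound explanation of why the naive argument fails.

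There is, however, a genuine flaw in your final paragraph. Wall's theorem converts $\mathrm{cd}(B)<\infty$ --- vanishing of $H^i(B;\mathcal{M})$ in high degrees for \emph{all} $\pi_1(B)$-modules $\mathcal{M}$ --- into homotopy finite-dimensionality; but the corollary's hypothesis is only $\mathrm{cd}_{\F_p}(B)<\infty$, which is strictly weaker: $B=K(\Z_q,1)$ for a prime $q\neq p$ has $\mathrm{cd}_{\F_p}(B)=0$ while $\mathrm{cd}(B)=\infty$. So you cannot invoke Wall from the stated hypothesis, and the finiteness of $n=\mathrm{cd}_{\F_p}(\widetilde{B})$, which your reverse inequality needs, is not secured by your argument. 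The correct elementary route is Shapiro's lemma for the finite regular cover, $H^i(\widetilde{B};\F_p)\cong H^i(B;\F_p[G])$ (local coefficients; $\F_p[G]\cong \mathrm{Hom}_{\F_p}(\F_p[G],\F_p)$ as $G$-modules), together with the filtration $\F_p[G]\supset I_G\supset I_G^2\supset\cdots\supset I_G^N=0$, whose subquotients are trivial modules because $G$ is a $p$-group; the resulting long exact sequences show that $H^i(B;\F_p)=0$ for all $i>n$ forces $H^i(\widetilde{B};\F_p)=0$ for all $i>n$, i.e. $\mathrm{cd}_{\F_p}(\widetilde{B})\le \mathrm{cd}_{\F_p}(B)<\infty$ --- the very nilpotence you already used. (Amusingly, this filtration argument by itself yields the inequality you were extracting from $\tau$ and coinvariants, after which only Brown's vanishing assertion is needed.) To be fair, the tension is partly inherited from the paper: Theorem~\ref{Thm:Brown} is stated under $\mathrm{cd}(B)<\infty$, so the corollary with the weaker hypothesis $\mathrm{cd}_{\F_p}(B)<\infty$ is not a purely formal consequence of it either; in the intended application ($B$ a finite CW-complex, as in Theorem~\ref{Thm:Covering}) one has $\mathrm{cd}(B)<\infty$ automatically, but your attempted repair via Wall is not valid under the hypothesis as stated.
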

      
      Note that a mod-$p$ homology rose is nothing but a path-connected space $K$ with $\mathrm{cd}_{\F_p}(K)=1$.
   So Theorem~\ref{Thm:Covering} follows from the above corollary. \nn
   
  In the rest of this section,
   we give an alternative proof of Theorem~\ref{Thm:Covering} without 
  referring to~\cite[Proposition 1.2]{BrownKahn77}. Our proof involves 
  a very simple spectral sequence and some elementary facts on the modular 
  representations of $\F_p[\Z_p]$.
  \nn

 \begin{proof}[\textbf{Proof of Theorem~\ref{Thm:Covering}}]
  Since $(\Z_p)^r$ acts freely and cellularly on $X$, its orbit space
   $K=X\slash (\Z_p)^r$ is also a finite CW-complex. 
   It is easy to see that there is a 
       sequence $X=X_r \rightarrow X_{r-1} \rightarrow \cdots \rightarrow X_1 \rightarrow X_0 = K$
       where $X_j \rightarrow X_{j-1}$ is a regular $\Z_p$-covering for each $1\leq j \leq r$.
       So it is sufficient to prove the theorem for $r=1$.
       In the rest of the proof, we assume that
        $\xi: X \rightarrow K$ is a regular $\Z_p$-covering. 
  \nn
  
  (1) Assume that $K$ is a mod-$p$ homology rose with $m$ petals, we want 
   to show that $X$ is also a mod-$p$ homology rose. By lemma~\ref{Lem:H_1}, 
   $H_1(K) \cong \Z^m\oplus T$ where $T$ is a finite abelian group without $p$-torsion.
   Let $\widetilde{K}$ be the regular covering of $K$ with
   $\pi_1(\widetilde{K})$ isomorphic to the kernel of the following group epimorphism
   $$ \widetilde{\sigma}: \pi_1(K) \longrightarrow H_1(K) = 
   \Z^m\oplus T \longrightarrow \Z^m. $$
   In other words, $\widetilde{K}$ is the maximal free abelian covering of $K$. 
   Meanwhile, $\pi_1(X)$ is isomorphic to the kernel of 
   a group epimorphism $\sigma_X: \pi_1(K) \rightarrow \Z_p$. Since
   $T$ has no $p$-torsion, $\sigma_X$ can factor through 
   $\widetilde{\sigma}$, i.e. there exists a group epimorphism 
   $\eta_X: \Z^m \rightarrow \Z_p$ so that
   $$\sigma_X = \eta_X \circ\widetilde{\sigma}: \pi_1(K) \longrightarrow 
    \Z^m \overset{\eta_X}{\longrightarrow} \Z_p.$$
   So $\widetilde{K}$ is also a regular covering of $X$. Let 
   $\mathcal{D}(\widetilde{K})$ and $\mathcal{D}(X)$ be the
   deck transformation group of $\widetilde{K}$ and $X$ over $K$, respectively.
   It is clear that 
    $$\mathcal{D}(\widetilde{K}) \cong \Z^m,\ \mathcal{D}(X)\cong \Z_p.$$
      Let $H$ be the normal subgroup of $\mathcal{D}(\widetilde{K})$ so that
      $\mathcal{D}(\widetilde{K})\slash H \cong \mathcal{D}(X)$. 
       So $H$ is a free abelian group of rank $m$ which 
       can be identified with the kernel of $\eta_X$. Then we have a short exact sequence
       of abelian groups
       \[ 0\longrightarrow H \cong \Z^m \overset{\Lambda_X}{\longrightarrow} \Z^m \cong \mathcal{D}(\widetilde{K}) 
        \overset{\eta_X}{\longrightarrow} \Z_p \longrightarrow 0. \]
     The map $\Lambda_X$ can be represented by a diagonal matrix 
     $\mathrm{Diag}(1,\cdots, 1, p)$ with respect to a 
     properly chosen basis of $\mathcal{D}(\widetilde{K})$, say $\{ t_1,\cdots, t_{m-1}, t_m\}$.
     Then $H$ is generated by $\{ t_1,\cdots, t_{m-1}, pt_m \}$. Let 
     $$ H' := \langle t_1,\cdots, t_{m-1}\rangle \subset \mathcal{D}(\widetilde{K}),\
      H\slash H' \cong \Z. $$
   
      Then $X'=\widetilde{K}\slash H'$ is an regular $\Z$-covering of $K$ and, at the same time,
      $X'$ is a regular $\Z$-covering of $X$. Indeed, we have
       $$X= \widetilde{K} \slash H = (\widetilde{K} \slash H')\slash (H'\slash H) 
       = X'\slash (H'\slash H).$$
       From the covering $X'$ over $K$, we obtain the following short exact sequence
        of chain complexes
       \begin{equation} \label{Equ:K_infty}
            0 \longrightarrow C_*(X';\F_p)
            \overset{t_m-1}{\xrightarrow{\hspace*{0.7cm}}}
             C_*(X';\F_p) \longrightarrow C_*(K;\F_p)
             \longrightarrow 0.
        \end{equation}
   By the homology long exact sequence of~\eqref{Equ:K_infty} and the assumption that
   $K$ is a mod-$p$ homology rose, we can conclude that
   $H_i(X';\F_p)  \overset{t_m-1}{\xrightarrow{\hspace*{0.7cm}}} H_i(X';\F_p)$
   is an isomorphism for any $i\geq 2$ and an injection for $i=1$. 
      \nn
      
            From the covering $X'$ over $X$, we have the short exact sequence
       \begin{equation} \label{Equ:K_infty-2}
            0 \longrightarrow C_*(X';\F_p)
            \overset{t_m^p-1}{\xrightarrow{\hspace*{0.7cm}}} C_*(X';\F_p) \longrightarrow
            C_*(X;\F_p) \longrightarrow 0.
        \end{equation}
                
       Notice that, over $\F_p$, $(t_m^p-1) = (t_m-1)^p$. So from
       the homology long exact sequence of~\eqref{Equ:K_infty-2}, we similarly obtain that
     $H_i(X';\F_p)  \overset{t^p_m-1}{\xrightarrow{\hspace*{0.7cm}}} H_i(X';\F_p)$ 
     is an isomorphism for any $i\geq 2$ and an injection for $i=1$. This implies that
            $H_i(X;\F_p)=0,\ i\geq 2$, i.e. $X$ is a mod-$p$ homology rose.\nn

      Moreover, since $K$ is a finite CW-complex, the Euler
         characteristics of $X$ and $K$ satisfy the relation $\chi(X) = p \cdot \chi(K)$. This
         implies that $X$ is a mod-$p$ homology rose with $(m-1)p +1$ petals. \nn
         
       (2) Conversely, assume $X$ is a mod-$p$ homology rose. 
               The covering canonically determines a fibration
       \begin{equation} \label{Equ:Serre-Fibration}
          X\longhookrightarrow X_{\Z_p} \longrightarrow B\Z_p
        \end{equation}  
        where $X_{\Z_p} = E\Z_p \times X\slash \sim$ is the Borel construction of
       the $\Z_p$-action on $X$ and $B\Z_p$ is the classifying space of $\Z_p$ 
       (see~\cite{AllPupp93}).
       Since the action of $\Z_p$ on $X$ is free, $X_{\Z_p} \simeq X\slash \Z_p = K$. 
       The cohomology Laray-Serre spectral sequence $(E_*,d_*)$ (with $\F_p$-coefficients) 
       of the fibration~\eqref{Equ:Serre-Fibration}
       converges to $H^*(X_{\F_p};\F_p) = H^*(K;\F_p)$ whose $E_2$-term is
       \[  E_2^{j,k} =  H^j(B\Z_p ;  \mathcal{H}^k(X;\F_p)) = 
            H^j(\Z_p ;  \mathcal{H}^k(X;\F_p))    \] 
        where $\mathcal{H}^k(X;\F_p)$ denotes $H^k(X;\F_p)$ as a
        $\F_p[\Z_p]$-module (see~\cite[Chapter 5]{SpecSeq2001}).
       Here $\F_p$-coefficient is implicitly assumed in all the cohomology groups.
        \nn
         Since $X$ is a mod-$p$ homology rose, 
        $ E_2^{j,k} = 0$ if $k\neq 0,1$.
        For convenience, let
         $$G_j := E^{j,1}_2 = H^j(\Z_p;  \mathcal{H}^1(X;\F_p)).$$   
        The differential $d_2 : E_2^{j,k} \rightarrow E_2^{j+2, k-1}$ is as shown in 
        Figure~\ref{Diagram:SpecSeq}.
  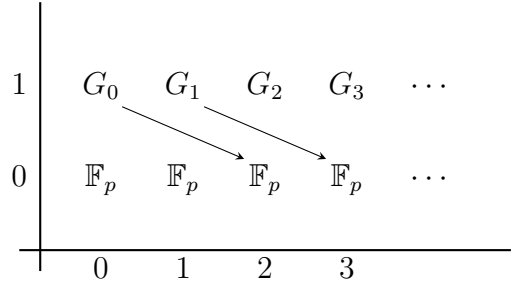
\begin{figure}
    \begin{tikzpicture} 
      \matrix (m) [matrix of math nodes,
    nodes in empty cells,nodes={minimum width=5ex,
    minimum height=5ex,outer sep=-5pt},
    column sep=1ex,row sep=1ex]{
                &      &     &     &  &  & \\
          1     &  G_0 &  G_1  & G_2 & G_3 & \cdots & \\
          0     &  \F_p  & \F_p &  \F_p  & \F_p & \cdots & \\
    \quad\strut &   0  &  1  &  2  & 3 &  & \strut \\};
    \draw[-stealth] (m-2-2.south east) -- (m-3-4.north west);
    \draw[-stealth] (m-2-3.south east) -- (m-3-5.north west);
    \draw[thick] (m-1-1.east) -- (m-4-1.east) ;
   \draw[thick] (m-4-1.north) -- (m-4-7.north) ;
   \end{tikzpicture}  
    \caption{The $E_2$-term}\label{Diagram:SpecSeq}
  \end{figure}   
   To compute $G_j$, we need to know the $\F_p[\Z_p]$-module structure
   of $\mathcal{H}^1(X;\F_p)$. \nn
   
   In the rest of the proof, we identify $\F_p[\Z_p]$ with $\F_p[x]\slash x^p$ where $x=t-1$ and
   $t$ is a generator of $\Z_p$. Then
   by the theory of modular representation of finite groups (see~\cite{Alperin86}),
   any indecomposable $\F_p[\Z_p]$-module over $\F_p$ is isomorphic to
   $\mathcal{R}_k = \F_p[a]\slash a^k , 1\leq k \leq p$ where  
   $x\in \F_p[x]\slash x^p$ 
   acts on $\mathcal{R}_k$ by multiplying each element of $\mathcal{R}_k$ by $a$.
   If we choose $\{ 1, a,\cdots, a^{k-1} \}$ as a linear basis of $\mathcal{R}_k$, 
   then
   \begin{equation}\label{Equ:x-action}
       x\cdot (1, a,\cdots, a^{k-1}) = (1, a,\cdots, a^{k-1}) 
           \begin{pmatrix}
             0 & 0 & \cdots & 0 & 0   \\
             1 & 0 & \cdots & 0 & 0\\
             0 & 1 &  \cdots & 0 & 0 \\
             \vdots &\vdots &\vdots &\vdots & \vdots\\
            0 & 0 &  \cdots & 1 & 0 
       \end{pmatrix} .
     \end{equation}
   Notice that $\mathcal{R}_1 = \F_p$ is the trivial $\F_p[\Z_p]$-module.
   Let $N_k$ denote the $k\times k$ matrix on the right hand side of~\eqref{Equ:x-action}. 
   It is clear that $\mathrm{rank}_{\F_p}(N_k) = k-1$.
       Suppose 
   \[ \mathcal{H}^1(X;\F_p) \cong \mathcal{R}_1^{l_1} \oplus \cdots \oplus \mathcal{R}_p^{l_p},
     \ \, l_1,\cdots, l_p \geq 0.\]
  Then by the Lemma~\ref{Lem:Homology-TwistCoeff} below,
  \begin{equation} \label{Equ:G_j}
     G_j \cong \begin{cases}
       (\F_p)^{\sum^{p}_{i=1} l_i},   &   \text{ $j=0$; } \\
       (\F_p)^{\sum^{p-1}_{i=1} l_i},     &   \text{ $j\geq 1$.} \\
 \end{cases}
   \end{equation}
   Let us see how to determine $l_1,\cdots, l_p$ from our conditions.
    Suppose $X$ is mod-$p$ homology rose with $n$-petals. Then
    \begin{equation} \label{Equ:n-equality-1}
     \mathrm{rank}_{\F_p}(\mathcal{H}^1(X;\F_p)) =   n=l_1 + 2\cdot l_2 +\cdots + p\cdot l_p 
     \end{equation}     
  In addition, the relation of Euler characteristics $\chi(X)=p\cdot\chi(K)$ implies
   \begin{equation} \label{Equ:n-equality-2}
       n = 1-p\cdot \chi(K).
     \end{equation}  
  \textbf{Claim:} $l_1=1$, $l_2=\cdots = l_{p-1} =0$, $l_p= -\chi(K)$.\nn
  
 Indeed, the above spectral sequence converges to $H^*(K;\F_p)$ which is nontrivial
       in only finite dimensions. Then since $G_j$, $j\geq 1$ are all isomorphic, we must have
       \begin{equation} \label{Equ:sum-of-di}
          \mathrm{rank}_{\Z_p}(G_j) = l_1 + \cdots + l_{p-1}  = 1, \ \forall\, j\geq 1.
       \end{equation}   
       This is because if $l_1 + \cdots + l_{p-1}  = 0$, every $E_2^{j,0}$ ($j\geq 3$) 
        will survive to $E_{\infty}$-term which contradicts the homological finiteness of $K$
        (see Figure~\ref{Diagram:SpecSeq}).
        Similarly, if $l_1 + \cdots + l_{p-1}  > 1$, $G_j=E_2^{j,1}$ ($j\geq 0$) can not be
         killed by $d_2$, which again contradicts the homological finiteness of $K$.\nn
         
        By~\eqref{Equ:sum-of-di}, exactly one of $l_1,\cdots, l_{p-1}$ is equal to one, 
        others are all zero. If we assume $l_k=1$ for some $1\leq k \leq p-1$,
              then $n=k + pl_p$ by~\eqref{Equ:n-equality-1}. 
              By plugging this into~\eqref{Equ:n-equality-2}, we get
         \[ k + p\cdot l_p = 1-p\cdot \chi(K) \ \Longrightarrow  \  
         p \cdot (l_p + \chi(K))=1-k.  \]
        Since $1\leq k \leq p-1$, 
        the only possibility is $k=1$ and $l_p=-\chi(K)$. The claim is proved.   \nn

      Therefore, $\mathcal{H}^1(X;\F_p) = \mathcal{R}_1 \oplus 
      \mathcal{R}_p^{-\chi(K)}$ and so
      \[    E_2^{j,1} =   H^j(\Z_p;  \mathcal{H}^1(X;\F_p)) = H^j(\Z_p; \mathcal{R}_1 \oplus
         \mathcal{R}_p^{-\chi(K)} ) = H^j(\Z_p; \mathcal{R}_1) \oplus 
          H^j(\Z_p; \mathcal{R}_p^{-\chi(K)})  \]
      
   Next, let us see what $d_2:  E_2^{j,1} \rightarrow  E_2^{j+2,0}$ should be
  when restricted to the $H^j(\Z_p; \mathcal{R}_1)\cong \F_p \subset E_2^{j,1}$
   (see Figure~\ref{Diagram:SpecSeq-2}). In this case, since $\mathcal{R}_1=\F_p$ is 
   the trivial $\F_p[\Z_p]$-module, the  
  multiplicative structure of the spectral sequence implies that 
  $d_2|_{H^j(\Z_p; \mathcal{R}_1)}$ is determined by $d_2|_{H^0(\Z_p; \mathcal{R}_1)}$. 
  Then we claim that $d_2$ must map 
   the $H^0(\Z_p; \mathcal{R}_1) \subset E_2^{0,1}$ 
   isomorphically onto $E_2^{2,0} \cong \F_p$, because otherwise
   all the $H^j(\Z_p; \mathcal{R}_1) \subset E_2^{j,1}$ $(j\geq 0)$ will survive
   to $E_{\infty}$, which contradicts the homological finiteness of $K$.
  \n
  
   Moreover, 
     $H^j(\Z_p; \mathcal{R}_p^{-\chi(K)})=0$ for all $j\geq 1$, hence has no contribution to
     $H^{\geq 2}(K;\F_p)$. So we obtain that $H^i(K;\F_p)=0$ for all $i\geq 2$, i.e. 
     $K$ is mod-$p$ homology rose.   
     This finishes the proof of Theorem~\ref{Thm:Covering}. 
  \end{proof}
  
    \begin{figure}
    \begin{tikzpicture} 
      \matrix (m) [matrix of math nodes,
    nodes in empty cells,nodes={minimum width=5ex,
    minimum height=5ex,outer sep=-5pt},
    column sep=1ex,row sep=1ex]{
                &      &     &     &  &  & \\
          1     &  \F_p &  \F_p  & \F_p & \F_p & \cdots & \\
          0     &  \F_p  & \F_p &  \F_p  & \F_p & \cdots & \\
    \quad\strut &   0  &  1  &  2  & 3 &  & \strut \\};
    \draw[-stealth] (m-2-2.south east) -- (m-3-4.north west);
    \draw[-stealth] (m-2-3.south east) -- (m-3-5.north west);
    \draw[thick] (m-1-1.east) -- (m-4-1.east) ;
   \draw[thick] (m-4-1.north) -- (m-4-7.north) ;
   \end{tikzpicture}  
    \caption{}\label{Diagram:SpecSeq-2}
  \end{figure}

 \begin{lem} \label{Lem:Homology-TwistCoeff}
  For any $1\leq k \leq p-1$, $H^j(\Z_p; \mathcal{R}_k) = \F_p$ for all $j\geq 0$.
  For $k=p$,
  \[ H^j(\Z_p; \mathcal{R}_p) =  \begin{cases}
    \F_p,   &   \text{ $j = 0$, } \\
     0     &   \text{ $j\neq 0$. } 
       \end{cases}
  \] 
 \end{lem}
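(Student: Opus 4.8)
The plan is to compute $H^*(\Z_p;\mathcal{R}_k)$ directly from the standard $2$-periodic free resolution of the trivial module over the group ring of a cyclic group, exploiting the peculiar shape this resolution takes over $\F_p$. Let $t$ be a generator of $\Z_p$ and let $N=1+t+\cdots+t^{p-1}$ be the norm element. The resolution
\[
 \cdots \xrightarrow{\,t-1\,} \F_p[\Z_p] \xrightarrow{\,N\,} \F_p[\Z_p] \xrightarrow{\,t-1\,} \F_p[\Z_p] \longrightarrow \F_p \longrightarrow 0
\]
becomes, after applying $\mathrm{Hom}_{\F_p[\Z_p]}(-,\mathcal{R}_k)$, the cochain complex
\[
 \mathcal{R}_k \xrightarrow{\,t-1\,} \mathcal{R}_k \xrightarrow{\,N\,} \mathcal{R}_k \xrightarrow{\,t-1\,} \mathcal{R}_k \xrightarrow{\,N\,} \cdots ,
\]
so that $H^0=\ker(t-1)$, $H^{2i}=\ker(t-1)/\mathrm{im}(N)$ for $i\geq 1$, and $H^{2i+1}=\ker(N)/\mathrm{im}(t-1)$ for $i\geq 0$.

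The key simplification, which I would single out first, is that over $\F_p$ both differentials are powers of $a$. Indeed, in $\F_p[\Z_p]=\F_p[x]/x^p$ with $x=t-1$, the identity $t^p-1=(t-1)^p$ gives $N=(t^p-1)/(t-1)=(t-1)^{p-1}=x^{p-1}$. Since $x$ acts on $\mathcal{R}_k=\F_p[a]/a^k$ as multiplication by $a$, the map $t-1$ is multiplication by $a$ and the map $N$ is multiplication by $a^{p-1}$. Thus the entire computation reduces to reading off kernels and images of multiplication by $a$ and by $a^{p-1}$ on $\mathcal{R}_k$.

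These are immediate from the monomial basis $\{1,a,\dots,a^{k-1}\}$: multiplication by $a$ has kernel $\langle a^{k-1}\rangle$ of dimension $1$ and image $\langle a,\dots,a^{k-1}\rangle$ of dimension $k-1$, so $H^0=\ker(a)\cong\F_p$ for every $k$. For $1\leq k\leq p-1$ one has $a^{p-1}=0$ in $\mathcal{R}_k$, hence $N=0$; therefore $H^{2i}=\ker(a)\cong\F_p$ for $i\geq 1$ and $H^{2i+1}=\mathcal{R}_k/\mathrm{im}(a)\cong\F_p$ for $i\geq 0$, giving $H^j=\F_p$ for all $j\geq 0$. For $k=p$, the module $\mathcal{R}_p=\F_p[\Z_p]$ is the regular module, and multiplication by $a^{p-1}$ has image $\langle a^{p-1}\rangle$ and kernel $\langle a,\dots,a^{p-1}\rangle$; comparing with the data for $a$ one checks $\mathrm{im}(a^{p-1})=\ker(a)$ and $\ker(a^{p-1})=\mathrm{im}(a)$, so all positive-degree quotients vanish and only $H^0\cong\F_p$ survives.

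I expect no genuine obstacle; the whole argument collapses to a one-line dimension count once the identity $N=x^{p-1}$ is in hand, and that identity is the only step deserving care. As an independent check on the $k=p$ case, the vanishing of $H^{j}(\Z_p;\mathcal{R}_p)$ for $j\geq 1$ also follows conceptually, since the regular module $\F_p[\Z_p]$ is coinduced and Shapiro's lemma forces its higher cohomology to vanish.
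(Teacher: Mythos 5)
Your proof is correct and is essentially the paper's argument: the paper works with the very same resolution, written directly as $\cdots \xrightarrow{\cdot x} \F_p[\Z_p] \xrightarrow{\cdot x^{p-1}} \F_p[\Z_p] \xrightarrow{\cdot x} \F_p[\Z_p] \to \F_p$ in $\F_p[x]/x^p$, which is exactly your standard norm resolution after the identity $N=(t-1)^{p-1}=x^{p-1}$, and it then reads off the cohomology from the ranks of multiplication by $a$ and $a^{p-1}$ on $\mathcal{R}_k$ (phrased there as $\mathrm{rank}_{\F_p}(N_k)=k-1$ and $\mathrm{rank}_{\F_p}(N_k^{p-1})=0$ or $1$), which matches your kernel/image count on the monomial basis. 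Your explicit derivation of $N=x^{p-1}$ and the Shapiro's-lemma cross-check for $k=p$ are pleasant additions but do not change the route.
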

 \begin{proof}
  If we identity $\F_p[\Z_p]$ with $\F_p[x]\slash x^p$, 
   a projective resolution of the trivial $\F_p[\Z_p]$-module $\F_p$ is:
  \[ \cdots\ \,  \F_p[\Z_p] \overset{\cdot x}{\xrightarrow{\hspace*{0.6cm}}} \F_p[\Z_p] 
     \overset{\cdot\, x^{p-1}}{\xrightarrow{\hspace*{0.6cm}}} \F_p[\Z_p]
     \overset{\cdot x}{\xrightarrow{\hspace*{0.6cm}}} \F_p[\Z_p] \overset{\varepsilon}{\xrightarrow{\hspace*{0.6cm}}} \F_p  
  \]
 where $\varepsilon$ is a ring homomorphism defined by 
  $\varepsilon(x)=0$ and $\varepsilon(1)=1$. Then 
  $H^j(\Z_p; \mathcal{R}_k)$ is the $j$-th cohomology group of the following chain complex
   \[ \cdots\ \, 
   \mathrm{Hom}_{\F_p[\Z_p]}(\F_p[\Z_p], \mathcal{R}_k) \overset{\delta_2}{\xleftarrow{\hspace*{0.6cm}}} \mathrm{Hom}_{\F_p[\Z_p]}(\F_p[\Z_p], \mathcal{R}_k)
    \overset{\delta_1}
  {\xleftarrow{\hspace*{0.6cm}}} \mathrm{Hom}_{\F_p[\Z_p]}(\F_p[\Z_p], \mathcal{R}_k)
     \]
     \[ \qquad\qquad \qquad\qquad \qquad\qquad \qquad\qquad \qquad\qquad 
     \overset{\delta_0}{\xleftarrow{\hspace*{0.6cm}}} 
     \mathrm{Hom}_{\F_p[\Z_p]}(\F_p[\Z_p], \mathcal{R}_k) \]
 where $\delta_{even} = \mathrm{Hom}_{\F_p[\Z_p]}(\cdot x)$,
 $\delta_{odd} = \mathrm{Hom}_{\F_p[\Z_p]}(\cdot x^{p-1})$.\n
 
  By identifying $\mathrm{Hom}_{\F_p[\Z_p]}(\F_p[\Z_p], \mathcal{R}_k)$ with $(\F_p)^k$ as 
 a module over $\F_p$, we can write the above chain complex explicitly as
   \begin{equation} \label{Equ:Seq-R_k}
   \cdots\ \,  (\F_p)^k \overset{N^t_k}{\xleftarrow{\hspace*{1cm}}} (\F_p)^k 
     \overset{\ (N_k^t)^{p-1}}{\xleftarrow{\hspace*{1cm}}} (\F_p)^k 
          \overset{N^t_k}{\xleftarrow{\hspace*{1cm}}} (\F_p)^k  
     \end{equation}
 where $ N_k^t$ is the transpose of the matrix $N_k$.  
 Then our lemma follows from the fact that $\mathrm{rank}_{\F_p}(N_k) = k-1$ and
 $$\mathrm{rank}_{\F_p} (N_k^{p-1}) = \begin{cases}
  0,   &   \text{ $1\leq k \leq p-1$; } \\
   1,     &   \text{ $k=p$. } \\
 \end{cases}$$

 \end{proof}
  \nn

\section{Fundamental Groups of Homology Roses} \label{Sec3}
 First of all, let us prove Proposition~\ref{Prop:Fund-Group}.
 \nn
 
 \begin{proof}[\textbf{Proof of Proposition~\ref{Prop:Fund-Group}}]
   The necessity follows from Lemma~\ref{Lem:H_1}. 
  For the sufficiency, let us first choose a connected $2$-complex $K$ with
  $\pi_1(K)=\Gamma$. By
  the following short exact sequence due to Hopf (see~\cite[Ch.II $\S 5$]{Brown82}) 
    \begin{equation} \label{Equ:Hopf-1}
        \pi_2(K) \longrightarrow H_2(K) \longrightarrow H_2(\Gamma) \longrightarrow 0, 
    \end{equation}    
    we get a new short exact sequence
    \begin{equation}  \label{Equ:Hopf-2}
         \pi_2(K)\otimes R \longrightarrow H_2(K)\otimes R
    \longrightarrow H_2(\Gamma)\otimes R \longrightarrow 0. 
   \end{equation} 
   The assumption $H_2(\Gamma;R)=0$ implies 
   $$\mathrm{Tor}(H_1(K);R)=\mathrm{Tor}(H_1(\Gamma);R)=0,\ \
   H_2(\Gamma)\otimes R = 0.$$ 
   So
    $H_2(K;R) = H_2(K)\otimes R$.
   Then from Equation~\eqref{Equ:Hopf-2}, we obtain a surjective 
   map 
   $ \pi_2(K)\otimes R \longrightarrow H_2(K;R)$. This implies that any
    $\alpha\in H_2(K;R)$ can be represented by a continuous map
   $\varphi_{\alpha}: S^2 \rightarrow K$. Since $H_2(K)$ is clearly a free abelian group,
   $H_2(K;R)$ is a free $R$-module.
    Suppose $\{ \alpha_i \}_{i\in I}$ is a 
   a set of generators of $H_2(K;R)$ over $R$. Then for all $i\in I$, we glue
   a $3$-ball $D^3$ to $K$ via $\varphi_{\alpha_i}$, which gives us
    a $3$-complex
   $K'$. It is easy to check that $K'$ is an $R$-homology rose with $\pi_1(K')\cong \Gamma$.
   Note that if $\Gamma$ is finitely presentable,
      $K'$ can be taken to be a finite $3$-complex.  
 \end{proof}

     Note that the same argument as the proof of 
    Proposition~\ref{Prop:Fund-Group} can be used to
      prove the following proposition.\n     
     
     \begin{prop} \label{Prop:mod-p-Acyclic}
    For any commutative ring $R$ with a unit, a group $\Gamma$
     can be realized as the fundamental group of an $R$-acyclic space
      if and only if
     $H_1(\Gamma;R)= H_2(\Gamma;R)=0$.
    \end{prop}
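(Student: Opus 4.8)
The plan is to adapt, almost verbatim, the construction used in the proof of Proposition~\ref{Prop:Fund-Group}; the only structural change is that the prescribed first homology $H_1(\Gamma;R)=R^m$ is replaced by $H_1(\Gamma;R)=0$, so that the space we build must be reduced-acyclic in \emph{every} positive degree rather than merely have $H_2=0$. Accordingly I would split the argument into necessity and sufficiency, reusing Hopf's sequence~\eqref{Equ:Hopf-1}, the universal coefficient theorem, and the cell-attachment trick already employed above.

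For the necessity, suppose $B$ is an $R$-acyclic space with $\pi_1(B)\cong\Gamma$. Since $H_1$ of a path-connected space depends only on its fundamental group, $H_1(\Gamma;R)=H_1(B;R)=0$. For the second condition I would use that, after a CW approximation, a $K(\Gamma,1)$ is obtained from $B$ by attaching cells of dimension $\geq 3$; such attachments never create $2$-dimensional homology, so the induced map $H_2(B;R)\to H_2(\Gamma;R)$ is surjective, and $H_2(B;R)=0$ forces $H_2(\Gamma;R)=0$. (Equivalently, tensor~\eqref{Equ:Hopf-1} with $R$ and feed the result into the universal coefficient theorem, exactly as in Lemma~\ref{Lem:H_1}.)

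For the sufficiency, assume $H_1(\Gamma;R)=H_2(\Gamma;R)=0$ and choose a connected $2$-complex $K$ with $\pi_1(K)\cong\Gamma$. The universal coefficient theorem turns $H_2(\Gamma;R)=0$ into the two statements $\mathrm{Tor}(H_1(\Gamma);R)=0$ and $H_2(\Gamma)\otimes R=0$, whence $H_2(K;R)=H_2(K)\otimes R$ is a free $R$-module and the tensored Hopf sequence~\eqref{Equ:Hopf-2} supplies a surjection $\pi_2(K)\otimes R\to H_2(K;R)$. As in Proposition~\ref{Prop:Fund-Group} this lets me represent a generating family $\{\alpha_i\}$ of $H_2(K;R)$ by maps $\varphi_{\alpha_i}\colon S^2\to K$ and cone each off with a $3$-ball to obtain a $3$-complex $K'$ with $\pi_1(K')\cong\Gamma$. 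Coning off $3$-balls does not change $H_1$, so $H_1(K';R)=H_1(K;R)=H_1(\Gamma;R)=0$, while the attached cells kill $H_2(K;R)$, giving $H_2(K';R)=0$.

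The step I expect to be the real obstacle is the top degree: I must verify that attaching the $3$-cells does not leave a spurious $H_3(K';R)$, i.e.\ that $K'$ is acyclic in \emph{all} positive degrees. This reduces to arranging that the cellular boundary $\partial_3\colon C_3(K';R)\to C_2(K;R)$ is injective, i.e.\ that the $\alpha_i$ are chosen to form a free $R$-basis of $H_2(K;R)$ rather than a redundant generating set. The delicate point is that a single map $S^2\to K$ realizes only an \emph{integral} spherical class, so the freedom to prescribe a basis is constrained by the image of the Hurewicz map; a short computation with the sequence $0\to A\to H_2(K)\to H_2(\Gamma)\to 0$ (with $A$ the spherical subgroup) shows that $\ker(\partial_3\otimes R)$ is governed by $\mathrm{Tor}(H_2(\Gamma);R)$, which vanishes under the present hypotheses in the cases of interest (for instance when $R$ is a field or the homology of $\Gamma$ is finitely generated). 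Once this top-degree vanishing is checked — the same verification that is implicit in Proposition~\ref{Prop:Fund-Group} — the remaining bookkeeping is routine, and finite presentability of $\Gamma$ makes $K'$ a finite $3$-complex.
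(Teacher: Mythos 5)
Your proposal is correct and follows essentially the same route as the paper, whose entire proof of this proposition is the remark that the argument of Proposition~\ref{Prop:Fund-Group} applies verbatim: choose a connected $2$-complex $K$ with $\pi_1(K)\cong\Gamma$, use Hopf's sequence~\eqref{Equ:Hopf-2} to get a surjection $\pi_2(K)\otimes R \to H_2(K;R)$, and attach $3$-cells along spherical representatives of generators. Your extra care about the top degree addresses a point the paper dismisses as ``easy to check'' (and it is a genuine subtlety: a redundant generating set $\{\alpha_i\}$ really does produce $H_3(K';R)=\ker\partial_3\neq 0$, since $C_4(K')=0$), and your fix is sound, with one small correction: over a field the right argument is to extract from the spanning image of the spherical subgroup a linearly independent subset, since $\mathrm{Tor}(H_2(\Gamma);R)$ itself need not vanish for a field when $H_2(\Gamma)$ is not finitely generated (e.g.\ $H_2(\Gamma)$ a Pr\"ufer $p$-group with $R=\F_p$), whereas for finitely presentable $\Gamma$ — the case the paper actually uses to obtain finite $3$-complexes — $H_2(\Gamma)$ is finitely generated and the Tor-vanishing you cite does hold.
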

    \n
    
    \begin{proof}[\textbf{Proof of Theorem~\ref{thm:Carlsson}}]
     Let $\Gamma=\pi_1(B)$ and let
     $b_1(\Gamma;\F_p) = b_1(B;\F_p)=m$. Then 
     since $X$ is a regular $(\Z_p)^r$-covering over $B$, we must have $r\leq m$.
         \n
           If $m=0$, our claim clearly holds.           
            If $m\geq 1$, 
      Proposition~\ref{Prop:Fund-Group} tells us that
      there exists a finite CW-complex $B'$ with $\pi_1(B')\cong \Gamma$ and
      $B'$ is a mod-$p$ homology rose with $m$ petals.
      In addition, the fundamental group $\pi_1(X)=N$
      can be identified with a normal subgroup of $\Gamma$ so that
       $\Gamma\slash N \cong (\Z_p)^r$. Then we can also think of $N$ as a 
       subgroup of $\pi_1(B')$, which determines a regular $(\Z_p)^r$-covering $X'$ over $B'$ with $\pi_1(X')\cong N$. 
       By Theorem~\ref{Thm:Covering}, $X'$ is also a mod-$p$ homology rose.
       Then from the fact that $b_1(B';\F_p)=b_1(\Gamma;\F_p)=m$ and
        the Euler characteristic $\chi(X')= p^r\cdot \chi(B')$,
       we obtain that
       $$ \ \ \ b_1(X;\F_p) = b_1(N;\F_p) = b_1(X';\F_p) = p^r(m-1) +1 \geq p^r(r-1)+1.$$        
     \begin{itemize}
       \item When $r=1$, $ b_1(X;\F_p) \geq 1 =2^r-1$;
                        \n
       \item When $r\geq 2$, $ b_1(X;\F_p) \geq p^r+ 1 > 2^r -1$.
     \end{itemize}
     
       So the theorem is proved. 
    \end{proof}
    \n
    
    \begin{proof}[\textbf{Proof of Corollary~\ref{Cor:Carlsson}}]
    Note that $H_2(B;\F_p)=0$ implies $H_2(B)\otimes \F_p =0$ and 
      $\mathrm{Tor}(H_1(B);\F_p)=0$. Then by~\eqref{Equ:Hopf-2},
       $H_2(\pi_1(B);\F_p)=0$. So by  
    Theorem~\ref{thm:Carlsson}, we obtain that
    $\sum^{\infty}_{i=0} b_i(X;\F_p) \geq b_0(X;\F_p) + b_1(X;\F_p) \geq 1 + (2^r-1) =2^r$.
    \end{proof}
  \n
  
   Suppose $G$ is a finitely presentable group. Let
     $\mathcal{P} = \langle a_1\cdots, a_n \,| \, r_1,\cdots, r_m \rangle $ be
      a finite presentation of $G$. The integer $n-m$ is called the \emph{deficiency} of 
      $\mathcal{P}$, denoted by $\mathrm{def}(\mathcal{P})$.
     The \emph{deficiency} of $G$, denoted by
      $\mathrm{def}(G)$,
      is the maximum over all its finite presentations, of the deficiency of each presentation.
      Note that if $G$ is not finitely presentable, we can still define 
      the presentation complex of any presentation of $G$, but the notion 
      of deficiency does not make sense anymore.\nn

      Any presentation $\mathcal{P}$ canonically determines
      a $2$-dimensional CW-complex $K_{\mathcal{P}}$ called the
      \emph{presentation complex} of $\mathcal{P}$.
         \nn
     \begin{itemize}
       \item $K_{\mathcal{P}}$ has a single vertex $q_0$,
       and one oriented $1$-cell $\gamma_j$ attached to $q_0$ for each generator $a_j$
    ($1\leq j \leq n$). So the $1$-skeleton of $K_{\mathcal{P}}$ is a bouquet of $n$ circles attached to $q_0$.\nn

       \item $K_{\mathcal{P}}$ has one oriented $2$-cell $\beta_i$ for each 
       relator $r_i$ ($1\leq i \leq m$), where $\beta_i$ is attached to the $1$-skeleton of 
       $K_{\mathcal{P}}$ via a map defined by $r_i$.\nn
     \end{itemize}

      \begin{lem} \label{Lem:Deficiency}
         For any finitely presentable group $G$ and any field $\F$,
         $$ {\rm def}(G) \leq b_1(G;\F) - b_2(G;\F).$$
      \end{lem}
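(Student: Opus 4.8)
The plan is to deduce the inequality $\mathrm{def}(G) \leq b_1(G;\F) - b_2(G;\F)$ by passing to the presentation complex and comparing its cellular chain homology with the group homology of $G$. Let $\mathcal{P} = \langle a_1,\dots,a_n \mid r_1,\dots,r_m\rangle$ be a finite presentation achieving the deficiency, so $\mathrm{def}(G) = n - m$, and let $K_{\mathcal{P}}$ be its presentation complex. Since $K_{\mathcal{P}}$ is a connected $2$-complex with a single $0$-cell, $n$ oriented $1$-cells and $m$ oriented $2$-cells, its cellular chain complex with $\F$-coefficients is $0 \to \F^m \to \F^n \to \F \to 0$, whence $\chi(K_{\mathcal{P}}) = 1 - n + m$ and equivalently $\mathrm{def}(\mathcal{P}) = 1 - \chi(K_{\mathcal{P}})$. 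Computing the Euler characteristic from $\F$-Betti numbers gives
\[
  \chi(K_{\mathcal{P}}) = b_0(K_{\mathcal{P}};\F) - b_1(K_{\mathcal{P}};\F) + b_2(K_{\mathcal{P}};\F) = 1 - b_1(K_{\mathcal{P}};\F) + b_2(K_{\mathcal{P}};\F),
\]
so that $\mathrm{def}(G) = n - m = b_1(K_{\mathcal{P}};\F) - b_2(K_{\mathcal{P}};\F)$.

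The remaining task is to compare the $\F$-homology of $K_{\mathcal{P}}$ with the group homology of $G = \pi_1(K_{\mathcal{P}})$. First, because $K_{\mathcal{P}}$ is connected with $\pi_1(K_{\mathcal{P}}) = G$, we have $H_1(K_{\mathcal{P}};\F) \cong H_1(G;\F)$, so $b_1(K_{\mathcal{P}};\F) = b_1(G;\F)$; the two first Betti numbers agree exactly. Next I would invoke Hopf's exact sequence~\eqref{Equ:Hopf-1} already recalled in the excerpt, $\pi_2(K_{\mathcal{P}}) \to H_2(K_{\mathcal{P}}) \to H_2(G) \to 0$, and tensor with $\F$ as in~\eqref{Equ:Hopf-2} to obtain a surjection $H_2(K_{\mathcal{P}};\F) \twoheadrightarrow H_2(G;\F)$ (using that the $\mathrm{Tor}$ terms vanish after tensoring, exactly as in the proof of Proposition~\ref{Prop:Fund-Group}). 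This surjection yields $b_2(K_{\mathcal{P}};\F) \geq b_2(G;\F)$.

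Combining the two comparisons gives the result directly:
\[
  \mathrm{def}(G) = b_1(K_{\mathcal{P}};\F) - b_2(K_{\mathcal{P}};\F) = b_1(G;\F) - b_2(K_{\mathcal{P}};\F) \leq b_1(G;\F) - b_2(G;\F),
\]
where the final inequality uses $b_2(K_{\mathcal{P}};\F) \geq b_2(G;\F)$. The main obstacle, and the only genuinely nonformal step, is establishing the surjectivity $H_2(K_{\mathcal{P}};\F) \twoheadrightarrow H_2(G;\F)$ at the level of $\F$-coefficients rather than integral coefficients; the Hopf sequence is integral, so I must be careful that tensoring with $\F$ preserves surjectivity and that the universal coefficient contributions do not spoil the comparison of the second Betti numbers. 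Since tensoring is right exact this surjectivity is automatic, and the degree-one agreement is exact, so no further delicacy arises; the inequality is not generally an equality precisely because $b_2(K_{\mathcal{P}};\F)$ can strictly exceed $b_2(G;\F)$ when $\pi_2(K_{\mathcal{P}})$ contributes nontrivially.
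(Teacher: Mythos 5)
Your overall route is viable, but the one step you yourself flag as nonformal is justified incorrectly, and the error is exactly at the point you declare harmless. You claim the surjection $H_2(K_{\mathcal{P}};\F)\twoheadrightarrow H_2(G;\F)$ follows from tensoring Hopf's integral sequence~\eqref{Equ:Hopf-1} with $\F$, ``using that the Tor terms vanish after tensoring, exactly as in the proof of Proposition~\ref{Prop:Fund-Group}.'' In that proposition the Tor terms vanish only because of the standing hypothesis $H_2(\Gamma;R)=0$; in the present lemma there is no such hypothesis, and in general they do not vanish. By universal coefficients, $H_2(K_{\mathcal{P}};\F)\cong \bigl(H_2(K_{\mathcal{P}})\otimes\F\bigr)\oplus \mathrm{Tor}(H_1(K_{\mathcal{P}}),\F)$ and likewise for $H_2(G;\F)$; already for $G=\Z_p$ and $\F=\F_p$ the Tor term is nonzero. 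Right-exactness of $-\otimes\F$ gives surjectivity only of $H_2(K_{\mathcal{P}})\otimes\F \to H_2(G)\otimes\F$, which is not the map of $\F$-homology groups, so ``no further delicacy arises'' is precisely where the delicacy arises. Fortunately the inequality $b_2(K_{\mathcal{P}};\F)\geq b_2(G;\F)$ that you actually need survives, for either of two reasons: (a) the Tor correction terms on the two sides are equal, since $H_1(K_{\mathcal{P}})\cong H_1(G)$ is the abelianization of $G$, so the inequality on the tensor parts suffices; or (b) more directly, build a $K(G,1)$ from $K_{\mathcal{P}}$ by attaching cells of dimension $\geq 3$: in cellular homology with any coefficients, $H_2$ of the result is a quotient of $\ker\partial_2 = H_2(K_{\mathcal{P}};\F)$, giving the surjection at the level of $\F$-coefficients with no Tor bookkeeping. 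With either repair, your chain $\mathrm{def}(\mathcal{P}) = b_1(K_{\mathcal{P}};\F)-b_2(K_{\mathcal{P}};\F) = b_1(G;\F)-b_2(K_{\mathcal{P}};\F) \leq b_1(G;\F)-b_2(G;\F)$ is correct. One further small point: choosing a presentation ``achieving the deficiency'' presupposes that the maximum exists, which is the kind of boundedness this lemma supplies; since your argument in fact bounds $\mathrm{def}(\mathcal{P})$ for \emph{every} finite presentation $\mathcal{P}$, phrase it that way and the issue disappears.

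For comparison, the paper's proof sidesteps the universal-coefficient subtlety entirely: it extends the partial free resolution $\cdots\to\Z[G]^{\oplus m}\to\Z[G]^{\oplus n}\to\Z[G]\to\Z\to 0$ arising from the universal cover of $K_{\mathcal{P}}$ to a full free resolution, applies $-\otimes_{\Z[G]}\F$ to obtain a single complex whose homology is literally $H_*(G;\F)$, and invokes the Morse inequality $1-n+m\geq b_0(G;\F)-b_1(G;\F)+b_2(G;\F)$. Working upstairs on the resolution means no comparison between $H_*(K_{\mathcal{P}};\F)$ and $H_*(G;\F)$ is ever needed, which is exactly the comparison where your write-up stumbles; your downstairs argument, once patched as above, is a legitimate and slightly more geometric alternative.
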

      \begin{proof}
           For any finite presentation 
           $\mathcal{P} = \langle a_1\cdots, a_n \,| \, r_1,\cdots, r_m \rangle $
           of $G$, there is an associated resolution of $\Z$ by projective $\Z G$-modules
           of the following form:
           \[   \cdots \rightarrow \Z [G]^{\oplus m} \rightarrow  \Z[G]^{\oplus n} \rightarrow
           \Z[G] \rightarrow \Z \rightarrow 0 \]
      Indeed, this comes from the cellular decomposition of the universal covering space of the
      $K(G,1)$ space built from the presentation complex $K_{\mathcal{P}}$ of $\mathcal{P}$.
      Then applying the functor $\otimes_{\Z[G]} \F$ to this projective resolution, we get
      a chain complex:
      \[  \cdots \rightarrow \Z [G]^{\oplus m} \otimes_{\Z[G]} \F \rightarrow  
             \Z[G]^{\oplus n}\otimes_{\Z[G]} \F \rightarrow
           \Z[G]\otimes_{\Z[G]} \F \rightarrow  \F \rightarrow 0\]
     whose homology groups are just $H_*(G;\F)$ (see~\cite{Brown82}).
      Moreover, by the Morse inequality of this chain complex, we obtain that
      \[ 1-n +m \geq b_0(G;\F) - b_1(G;\F) + b_2(G;\F). \]
      So $ n -m \leq b_1(G;\F) - b_2(G;\F)$. Since this argument works for
      arbitrary finite presentations of $G$, so we get
      $ {\rm def}(G) \leq b_1(G;\F) - b_2(G;\F)$.
       \end{proof}

   \begin{prop} \label{prop:Deficiency-1}
    If a finite $2$-complex $K$ is an 
    $\F$-homology rose with $m$ petals, then the deficiency of the 
    fundamental group $\pi_1(K)$ of 
    $K$ is equal to $m$. If $K$ is an $\F$-acyclic space, 
    then the deficiency of $\pi_1(K)$ is equal to $0$.
  \end{prop}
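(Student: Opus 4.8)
The plan is to prove both statements by sandwiching $\mathrm{def}(\pi_1(K))$ between a homological upper bound coming from Lemma~\ref{Lem:Deficiency} and a geometric lower bound coming from $K$ itself, and then observing that the two coincide. Write $G = \pi_1(K)$. For the upper bound in the homology-rose case, I would recall from~\eqref{Equ:b_1} that $b_1(G;\F) = m$ and $b_2(G;\F) = 0$, so Lemma~\ref{Lem:Deficiency} gives $\mathrm{def}(G) \le b_1(G;\F) - b_2(G;\F) = m$. For the $\F$-acyclic case the same input is needed with $m$ replaced by $0$: since $H_1$ of a space depends only on its fundamental group, $\F$-acyclicity forces $b_1(G;\F) = b_1(K;\F) = 0$, and the surjection $H_2(K;\F) \twoheadrightarrow H_2(G;\F)$ coming from the five-term exact sequence of the universal cover (the Cartan--Leray spectral sequence, using $H_1(\widetilde{K};\F) = 0$) forces $b_2(G;\F) = 0$; Lemma~\ref{Lem:Deficiency} then yields $\mathrm{def}(G) \le 0$.

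For the lower bound I would turn $K$ itself into a finite presentation of $G$ and read off its deficiency from the Euler characteristic. Choose a maximal tree $T$ in the $1$-skeleton of $K$ and collapse it. Since $T$ is contractible, the quotient $\overline{K} = K/T$ is homotopy equivalent to $K$, hence has the same fundamental group $G$ and the same Euler characteristic; moreover $\overline{K}$ has a single vertex, so it is the presentation complex $K_{\mathcal{P}}$ of a finite presentation $\mathcal{P}$ of $G$ whose generators are the $1$-cells and whose relators are the $2$-cells. If $\overline{K}$ has $n$ one-cells and $m'$ two-cells, then $\mathrm{def}(\mathcal{P}) = n - m'$ while $\chi(\overline{K}) = 1 - n + m'$, so $\mathrm{def}(\mathcal{P}) = 1 - \chi(K)$.

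It then remains to evaluate $\chi(K)$ from the homology hypotheses. In the homology-rose case $\chi(K) = b_0(K;\F) - b_1(K;\F) + b_2(K;\F) = 1 - m + 0$, whence $\mathrm{def}(\mathcal{P}) = 1 - (1 - m) = m$ and therefore $\mathrm{def}(G) \ge m$; combined with the upper bound this gives $\mathrm{def}(G) = m$. In the $\F$-acyclic case $\chi(K) = 1$, so $\mathrm{def}(\mathcal{P}) = 0$ and $\mathrm{def}(G) \ge 0$, which together with $\mathrm{def}(G) \le 0$ gives $\mathrm{def}(G) = 0$.

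The only point that really requires care is the lower bound: one must know that the homotopy-invariant Euler characteristic, which is computable over $\F$ precisely because $K$ is finite, is the same quantity governing the deficiency of the presentation obtained by collapsing a maximal tree. Both the finiteness of $K$ (so that $\chi$ and the presentation are finite) and its $2$-dimensionality (so that the presentation complex carries no higher cells) are used here; these are exactly the hypotheses that make the homological inequality of Lemma~\ref{Lem:Deficiency} sharp and force equality rather than a mere bound.
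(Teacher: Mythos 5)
Your proposal is correct and takes essentially the same approach as the paper: the upper bound $\mathrm{def}(\pi_1(K))\leq m$ (resp.\ $\leq 0$) via Lemma~\ref{Lem:Deficiency} with $b_1(\pi_1(K);\F)=m$, $b_2(\pi_1(K);\F)=0$, and the lower bound by reducing $K$ to a single-vertex complex and counting cells, which is exactly the paper's ``without loss of generality, $K$ has a single $0$-cell'' step. Your explicit maximal-tree collapse, the Euler-characteristic bookkeeping $\mathrm{def}(\mathcal{P})=1-\chi(K)$, and the Cartan--Leray justification of $b_2(\pi_1(K);\F)=0$ in the acyclic case merely spell out details the paper leaves implicit.
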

  \begin{proof}
    Without loss of generality, we can assume that $K$ has a single $0$-cell and $n$ $1$-cells. 
    Then the number of $2$-cells in $K$ is $n-m$.
    So $\mathrm{def}(\pi_1(K)) \geq m$. On the other hand,
    since $b_1(\pi_1(K);\F)=m$ and $b_2(\pi_1(K);\F)=0$,
    Lemma~\ref{Lem:Deficiency} implies that
    $\mathrm{def}(\pi_1(K))\leq m$. So we must have $\mathrm{def}(\pi_1(K)) = m$.
    The same argument clearly works for mod-$p$ acyclic spaces.
  \end{proof}
  
 It is clear that any group with positive deficiency must be infinite. 
 So we obtain the following corollary.\n
   
  \begin{cor}
    If a finite $2$-complex $K$ is an 
    $\F$-homology rose, $\pi_1(K)$ must be infinite.
  \end{cor}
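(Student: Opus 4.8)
The plan is to combine the deficiency computation in Proposition~\ref{prop:Deficiency-1} with the elementary fact that a group of positive deficiency cannot be finite. First I would observe that, by definition, an $\F$-homology rose has $m \geq 1$ petals, so Proposition~\ref{prop:Deficiency-1} applied to the finite $2$-complex $K$ gives $\mathrm{def}(\pi_1(K)) = m \geq 1 > 0$. Thus it suffices to establish the general claim, already flagged as clear in the text, that any finitely presentable group $G$ with $\mathrm{def}(G) > 0$ must be infinite.

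For that claim I would argue by contraposition: suppose $G$ is finite, and show $\mathrm{def}(G) \leq 0$. The key input is that a finite group has vanishing rational homology in all positive degrees. Indeed, for each $i \geq 1$ the group $H_i(G;\Q)$ is a $\Q$-vector space, yet the standard transfer (averaging) argument shows it is annihilated by $|G|$; since $|G|$ is invertible in $\Q$, this forces $H_i(G;\Q) = 0$. In particular $b_1(G;\Q) = b_2(G;\Q) = 0$. Then Lemma~\ref{Lem:Deficiency}, applied with the field $\F = \Q$, yields
\[ \mathrm{def}(G) \leq b_1(G;\Q) - b_2(G;\Q) = 0. \]
Hence a finite group never has positive deficiency, which is precisely the contrapositive of the desired claim.

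Putting these together, $\mathrm{def}(\pi_1(K)) = m \geq 1$ means $\pi_1(K)$ cannot be finite, so $\pi_1(K)$ is infinite. The argument has no real obstacle: the deficiency value is handed to us by Proposition~\ref{prop:Deficiency-1}, and the only substantive point is the standard vanishing of rational homology for finite groups, which feeds directly into Lemma~\ref{Lem:Deficiency}. The mildest subtlety worth double-checking is simply that the definition of $\F$-homology rose enforces $m \geq 1$, so that the deficiency is strictly positive rather than merely nonnegative; this is what rules out the finite case.
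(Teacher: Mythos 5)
Your proof is correct and follows the paper's own route: the paper likewise deduces the corollary from Proposition~\ref{prop:Deficiency-1} together with the fact, stated there as clear, that a group of positive deficiency must be infinite. Your only addition is to justify that fact explicitly, via the transfer argument giving $b_1(G;\Q)=b_2(G;\Q)=0$ for finite $G$ and then Lemma~\ref{Lem:Deficiency} with $\F=\Q$, which is a valid filling-in of the detail the paper leaves unproved.
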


  Next, we investigate which finitely generated 
  abelian groups can be realized as the fundamental groups of
     mod-$p$ homology roses.\nn
    
  \begin{lem} \label{Lem:Deficiency-Abelian}
    For a finitely generated abelian group
    $A=\Z^{r} \oplus \Z_{d_1}\oplus \Z_{d_2}\oplus\cdots \oplus \Z_{d_k}$ where $d_1>1$ and
    $d_1 | d_2 | \cdots | d_k$,
    $\mathrm{def}(A) = r - \binom{r+k}{2}$.   
  \end{lem}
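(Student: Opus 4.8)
The plan is to establish the equality by a two-sided estimate: a lower bound from an explicit presentation of $A$, and a matching upper bound from Lemma~\ref{Lem:Deficiency} applied to a carefully chosen field.

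For the lower bound, I would write down the standard presentation of $A$ on a minimal generating set. Take generators $x_1,\dots,x_r$ for the free part and $y_1,\dots,y_k$ for the torsion part, so there are $r+k$ generators in total. As relators take the $\binom{r+k}{2}$ commutators $[g_i,g_j]$ over all pairs of the $r+k$ generators (forcing the group to be abelian) together with the $k$ power relations $y_l^{d_l}$ ($1\le l\le k$). One checks directly that this presents $A$, and its deficiency is
\[ (r+k) - \binom{r+k}{2} - k = r - \binom{r+k}{2}, \]
so $\mathrm{def}(A) \ge r - \binom{r+k}{2}$.

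For the upper bound, the key observation is to choose $\F = \F_p$ for a prime $p$ dividing $d_1$. Since $d_1 \mid d_2 \mid \cdots \mid d_k$, such a $p$ divides every $d_l$, so \emph{every} torsion factor contributes to mod-$p$ homology. I would then compute $H_*(A;\F_p)$ via the K\"unneth theorem, using $H_*(\Z;\F_p)$ (Poincar\'e series $1+t$) and $H_*(\Z_{d_l};\F_p)=\F_p$ in every nonnegative degree (Poincar\'e series $(1-t)^{-1}$). The Poincar\'e series of $A$ is thus $(1+t)^r(1-t)^{-k}$, and reading off the coefficients of $t^1$ and $t^2$ gives $b_1(A;\F_p)=r+k$ and $b_2(A;\F_p)=\binom{r}{2}+rk+\binom{k+1}{2}$. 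A short simplification of binomial coefficients yields $b_1 - b_2 = r - \binom{r+k}{2}$, whence Lemma~\ref{Lem:Deficiency} gives $\mathrm{def}(A)\le r-\binom{r+k}{2}$. Combining the two bounds proves the lemma.

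The main obstacle, modest though it is, lies in the upper bound: one must realize that the right field to feed into Lemma~\ref{Lem:Deficiency} is $\F_p$ with $p\mid d_1$, so that the divisibility chain makes all of the torsion visible in homology, and then carry out the binomial bookkeeping confirming that $b_1-b_2$ collapses exactly to $r-\binom{r+k}{2}$. A field of characteristic $0$, or of characteristic not dividing $d_1$, would undercount the torsion and yield only a weaker (larger) upper bound; the divisibility hypothesis is precisely what forces the two bounds to coincide.
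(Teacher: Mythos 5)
Your proof is correct, but your upper bound takes a genuinely different route from the paper's. The lower bound is identical: both you and the paper write down the standard presentation with $r+k$ generators, $\binom{r+k}{2}$ commutators and $k$ power relations. For the upper bound, however, the paper does not use Lemma~\ref{Lem:Deficiency} at all; it instead invokes the classical inequality $\mathrm{def}(G) \leq r(G_{ab}) - d(M(G))$ (Robinson, 14.1.5), computes the Schur multiplicator integrally as $M(A) \cong \Z^{\binom{r}{2}} \oplus \Z_{d_1}^{r+k-1} \oplus \cdots \oplus \Z_{d_k}^{r}$, and then shows $d(M(A)) = \binom{r+k}{2}$ by reducing modulo a prime $p \mid d_1$ so that $M(A)\slash pM(A)$ is an $\F_p$-vector space of dimension $\binom{r+k}{2}$. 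Your route replaces the multiplicator inequality by the paper's own Lemma~\ref{Lem:Deficiency} with $\F = \F_p$, $p \mid d_1$, and a K\"unneth computation of $b_1(A;\F_p)$ and $b_2(A;\F_p)$; your binomial arithmetic checks out ($b_2 = \binom{r}{2} + rk + \binom{k+1}{2}$ agrees with the paper's $l=1$ computation in Example~\ref{Exam:Gap-Abelian}, and $b_1 - b_2 = r - \binom{r+k}{2}$). Note that both arguments pivot on exactly the same key choice --- the prime $p \mid d_1$, which by the divisibility chain makes every torsion factor visible --- so the two proofs are morally cousins. What yours buys is self-containment: it needs only Lemma~\ref{Lem:Deficiency}, which is proved in the paper, plus an elementary K\"unneth calculation, and it avoids both the external citation and the unproved integral formula for $H_2(A;\Z)$. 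What the paper's buys is that integral formula itself, which is stated as Equation~\eqref{Equ:H2-Abelian} and reused later (in Example~\ref{Exam:Gap-Abelian}, in deriving~\eqref{Equ:Fund-Homology-Rose}, and in the criterion that $H_2(A)=0$ iff $A$ is cyclic), so the paper gets more mileage out of its slightly heavier machinery.
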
 
  \begin{proof}
    It is clear that $A$ can be presented by 
    \begin{equation} \label{Equ:Presentation-Abelian}
       \mathcal{P}_A=\langle \, a_1,\cdots, a_{r}, b_1,\cdots, b_k \ |\ b^{d_1}_1,\cdots, 
          b^{d_k}_k,\, [a_i,a_{i'}], [b_j,b_{j'}], [a_i,b_j], \qquad\ \ 
     \end{equation}
     $$\qquad \qquad\qquad\qquad\ \ 1\leq i < i' \leq r, 1\leq j < j'\leq k    
        \, \rangle.$$ 
        So $\mathrm{def}(A)\geq \mathrm{def}(\mathcal{P}_A)= r - \binom{r+k}{2}$.
     On the other hand, for a finitely presented group $G$, it is well known that (see~\cite[14.1.5]{Robin96})
     \begin{equation} \label{Def:Deficiency-G}
        \mathrm{def}(G) \leq r(G_{ab}) - d(M(G)) 
     \end{equation}   
     where $r(G_{ab})$ is the free rank of the abelianization $G_{ab}$ of $G$ (i.e.
     $r(G_{ab}) = b_1(G;\Q)$), 
     $M(G)\cong H_2(G;\Z)$ is the
     \emph{Schur multiplicator} of $G$ and
     $d(M(G))$ is the minimum number of elements that can generate $M(G)$.
    For our group $A$, 
      \begin{equation} \label{Equ:H2-Abelian}
           M(A)= H_2(A;\Z) \cong \Z^{\binom{r}{2}}\oplus \Z^{r+k-1}_{d_1} \oplus 
            \Z^{r+k-2}_{d_2}\oplus  \cdots \oplus \Z^r_{d_k}.
      \end{equation}        
   The number of elements that can generate $M(A)$ is at least 
    $$ \binom{r}{2} + (r+k-1) + (r+k-2) + \cdots + r = \binom{r+k}{2}.$$  
    This is because if choose a prime $p|d_1$, then $M(A)\slash pM(A)$ will be a 
    vector space over $\F_p$ of dimension $\binom{r+k}{2}$, which 
    requires at least $\binom{r+k}{2}$ generators for $M(A)$.
   So 
    \begin{equation}\label{Equ:d(M(A))}
       d(M(A)) = \binom{r+k}{2}.
     \end{equation}  
   Then by~\eqref{Def:Deficiency-G}, we have
    $ \mathrm{def}(A) \leq r(A) - d(M(A)) = r - \binom{r+k}{2}$. The lemma is proved.    
  \end{proof}
    
   Suppose an abelian group 
   $A=\Z^{r} \oplus \Z_{d_1}\oplus \Z_{d_2}\oplus\cdots \oplus \Z_{d_k}$  
   where $d_1>1$ and $d_1 | d_2 | \cdots | d_k$ 
   is the fundamental group of a mod-$p$ homology rose $K$.
   Then $A \cong H_1(K)$ has no $p$-torsion and $H_2(A;\F_p)=0$ 
   by Proposition~\ref{Prop:Fund-Group}.
     So by~\eqref{Equ:H2-Abelian} and the fact that $A \cong H_1(K)$ is an 
       infinite group (by Lemma~\ref{Lem:H_1}), we must have
            \begin{equation}\label{Equ:Fund-Homology-Rose}
         \text{$r =1$ and $p\nmid d_i$, $1\leq i \leq k$.} 
      \end{equation}   
     
    Conversely, Proposition~\ref{Prop:Fund-Group} implies that 
    any finitely generated abelian group $A$ with free rank $1$ and without 
     $p$-torsion can be realized as the fundamental group of some mod-$p$ homology rose.
    Indeed, there is a more direct way to see this fact. 
    let $\mathbb{P}_m$ be the mapping cone of a map $f: S^1 \rightarrow S^1$ with degree $m$
    (called a \emph{pseudo-projective plane} of order $m$). For any $p\nmid m$, $\mathbb{P}_m$
    is a mod-$p$ acyclic space. 
    We can write $A=\Z\oplus\Z_{d_1}\oplus \Z_{d_2} \oplus \cdots \oplus \Z_{d_k}$
   where $p\nmid d_i$, $1\leq i \leq k$. Define
    $$L_A = S^1 \times \mathbb{P}_{d_1}\times \mathbb{P}_{d_2}\times 
    \cdots \times \mathbb{P}_{d_k}.$$ 
    It is clear that $L_A$ is a mod-$p$ homology circle whose fundamental group
       is $A$. So we obtain the 
       following result.
       \nn
       
     \begin{prop} \label{prop:Homology-Rose-Abelian}
        A finitely generated abelian group $A$ can be realized as the fundamental group 
        of a mod-$p$ homology rose if and only if 
        $A$ has free rank $1$ and has no $p$-torsion. 
     \end{prop}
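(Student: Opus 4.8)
The plan is to reduce everything to Proposition~\ref{Prop:Fund-Group} applied with $R=\F_p$, which tells us that $A$ is the fundamental group of some mod-$p$ homology rose precisely when $H_1(A;\F_p)\cong(\F_p)^m$ for some $m\geq 1$ and $H_2(A;\F_p)=0$. Since $A$ is abelian we have $H_1(A;\Z)=A$, so both conditions are computable directly from the invariant factors of $A$ together with the universal coefficient theorem. The one nontrivial input I will need is the structure formula~\eqref{Equ:H2-Abelian} for the integral second homology (Schur multiplicator) of a finitely generated abelian group; the key feature of that formula is that the free rank of $H_2(A;\Z)$ equals $\binom{r}{2}$.

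For the necessity direction I would argue as follows. Suppose $A$ is realized by a mod-$p$ homology rose $K$. By Lemma~\ref{Lem:H_1}, $A\cong H_1(K)$ has no $p$-torsion and free rank $m\geq 1$; writing $A=\Z^r\oplus\Z_{d_1}\oplus\cdots\oplus\Z_{d_k}$ in invariant-factor form, this means $r=m\geq 1$ and $p\nmid d_i$ for every $i$. The universal coefficient theorem gives $H_2(A;\F_p)\cong\bigl(H_2(A;\Z)\otimes\F_p\bigr)\oplus\mathrm{Tor}(A,\F_p)$, and the absence of $p$-torsion kills the $\mathrm{Tor}$ summand. Feeding~\eqref{Equ:H2-Abelian} into $H_2(A;\Z)\otimes\F_p$, every cyclic summand $\Z_{d_i}$ dies because $p\nmid d_i$, leaving exactly the free part $(\F_p)^{\binom{r}{2}}$. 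Proposition~\ref{Prop:Fund-Group} forces this to vanish, so $\binom{r}{2}=0$, i.e. $r\leq 1$; combined with $r\geq 1$ this yields $r=1$, as required.

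For the sufficiency direction, assume $A$ has free rank $1$ and no $p$-torsion, so $A=\Z\oplus\Z_{d_1}\oplus\cdots\oplus\Z_{d_k}$ with $p\nmid d_i$. The cleanest route is the explicit construction: take the pseudo-projective plane $\mathbb{P}_{d_i}$ (the mapping cone of a degree-$d_i$ self-map of $S^1$), which is mod-$p$ acyclic when $p\nmid d_i$, and set $L_A=S^1\times\mathbb{P}_{d_1}\times\cdots\times\mathbb{P}_{d_k}$. A Künneth computation over $\F_p$ then shows $L_A$ is a mod-$p$ homology circle with $\pi_1(L_A)\cong A$. Alternatively one can verify the hypotheses of Proposition~\ref{Prop:Fund-Group} abstractly: here $H_1(A;\F_p)=A\otimes\F_p=\F_p$, and the same universal-coefficient computation as above, now with $r=1$, gives $H_2(A;\F_p)=0$.

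I expect no genuine obstacle beyond having the structure formula~\eqref{Equ:H2-Abelian} in hand; once it is, the whole argument is bookkeeping of free ranks modulo $p$. The conceptual heart is the observation that the obstruction $\binom{r}{2}$ is precisely the rank of the exterior square $\Lambda^2\Z^r$ appearing in the free part of the multiplicator, and it is exactly this quantity that rules out free rank greater than $1$.
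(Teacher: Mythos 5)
Your proposal is correct and takes essentially the same route as the paper: necessity via Lemma~\ref{Lem:H_1} and Proposition~\ref{Prop:Fund-Group} combined with the multiplicator formula~\eqref{Equ:H2-Abelian}, whose free part $\binom{r}{2}$ forces $r\leq 1$, and sufficiency via the explicit product $L_A=S^1\times\mathbb{P}_{d_1}\times\cdots\times\mathbb{P}_{d_k}$, which is exactly the paper's construction. The only cosmetic difference is that you obtain $r\geq 1$ from the free rank $m\geq 1$ in Lemma~\ref{Lem:H_1}, while the paper invokes the same lemma to say $A$ is infinite---the identical fact.
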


      \begin{cor}
         If the fundamental group of a mod-$p$ homology rose $K$ is a finitely 
         generated abelian  group, $K$ must be a mod-$p$ homology circle.
      \end{cor}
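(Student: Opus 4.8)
The plan is to reduce the statement to a one-line dimension count by combining Proposition~\ref{prop:Homology-Rose-Abelian} with the homological description of the number of petals. Recall that if $K$ is a mod-$p$ homology rose with $m$ petals, then by definition $b_1(K;\F_p)=m$, and Equation~\eqref{Equ:b_1} identifies this number as $m=b_1(\pi_1(K);\F_p)$. Hence it suffices to prove that $b_1(\pi_1(K);\F_p)=1$, which forces $m=1$ and exhibits $K$ as a mod-$p$ homology circle.

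First I would apply Proposition~\ref{prop:Homology-Rose-Abelian}. Since $\pi_1(K)$ is a finitely generated abelian group realized as the fundamental group of a mod-$p$ homology rose, the proposition guarantees that $\pi_1(K)$ has free rank $1$ and no $p$-torsion. I may therefore write
\[ \pi_1(K) \cong \Z \oplus \Z_{d_1} \oplus \cdots \oplus \Z_{d_k}, \qquad p \nmid d_i \ (1\leq i \leq k). \]

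Next I would compute $b_1(\pi_1(K);\F_p) = \dim_{\F_p}\bigl(\pi_1(K)\otimes \F_p\bigr)$. Because $p\nmid d_i$ for every $i$, each torsion summand satisfies $\Z_{d_i}\otimes \F_p \cong \Z_{\gcd(d_i,p)} = 0$, so only the free $\Z$-summand survives the reduction modulo $p$. This gives $\pi_1(K)\otimes \F_p \cong \F_p$, and hence $b_1(\pi_1(K);\F_p)=1$. Combining this with the reduction above yields $m=1$, i.e.\ $K$ is a mod-$p$ homology circle.

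This argument has no substantial obstacle, since it is essentially an immediate corollary of the structural result already established. The only points demanding care are the identification of the petal count $m$ with $b_1(\pi_1(K);\F_p)$ via Equation~\eqref{Equ:b_1}, and the observation that torsion coprime to $p$ contributes nothing after tensoring with $\F_p$.
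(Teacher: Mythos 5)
Your proposal is correct and follows essentially the same route as the paper: apply Proposition~\ref{prop:Homology-Rose-Abelian} to see that $\pi_1(K)$ has free rank $1$ and no $p$-torsion, deduce $b_1(\pi_1(K);\F_p)=1$, and conclude via the identification of the petal count with $b_1(\pi_1(K);\F_p)$. You merely spell out the tensor computation $\Z_{d_i}\otimes\F_p=0$ that the paper leaves implicit, which is a harmless elaboration.
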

      \begin{proof}
         By Proposition~\ref{prop:Homology-Rose-Abelian}, $\pi_1(K)$ must 
       be an abelian group with free rank $1$ and has no $p$-torsion.
         Then $b_1(\pi_1(K);\F_p)=1$, which implies that $K$ is a mod-$p$ homology circle.
      \end{proof}
      \n

      \begin{cor} \label{Cor:Homology-Rose-S1}
      A $2$-dimensional finite complex $K$ is a
       mod-$p$ homology rose with abelian fundamental group
      if and only if $K$ is homotopy equivalent to $S^1$.
    \end{cor}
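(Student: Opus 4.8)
The plan is to prove the two implications separately, with all the difficulty concentrated in the forward direction. The converse is immediate: $S^1$ is itself a mod-$p$ homology circle (hence a mod-$p$ homology rose with one petal) with abelian fundamental group $\pi_1(S^1)\cong\Z$, and both properties are homotopy invariants, so any $K\simeq S^1$ qualifies.

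For the forward direction, I would first pin down the fundamental group exactly. Since $\pi_1(K)$ is finitely generated abelian, the previous corollary shows that $K$ is in fact a mod-$p$ homology circle, so by Proposition~\ref{prop:Homology-Rose-Abelian} the group $A:=\pi_1(K)$ has free rank $1$ and no $p$-torsion; write $A=\Z\oplus\Z_{d_1}\oplus\cdots\oplus\Z_{d_k}$ with $d_1\mid\cdots\mid d_k$. Here two-dimensionality enters decisively: because $K$ is a finite $2$-complex and a mod-$p$ homology rose with one petal, Proposition~\ref{prop:Deficiency-1} gives $\mathrm{def}(A)=1$, while Lemma~\ref{Lem:Deficiency-Abelian} evaluates $\mathrm{def}(A)=1-\binom{1+k}{2}$. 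Equating the two forces $\binom{1+k}{2}=0$, hence $k=0$ and $A\cong\Z$. (Without the $2$-dimensional hypothesis one would only get $\mathrm{def}(A)\le 1$, and indeed $S^1\times\mathbb{P}_{d}$ shows the torsion cannot be excluded in higher dimensions.)

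It remains to upgrade $\pi_1(K)\cong\Z$ to the homotopy statement $K\simeq S^1$, and this is the main obstacle. Since $S^1=K(\Z,1)$, it suffices to show $K$ is aspherical, i.e. that its universal cover $\widetilde K$ is contractible. After collapsing a spanning tree I may assume $K$ has a single $0$-cell, so the cellular chain complex of $\widetilde K$ is a complex of finitely generated free modules $0\to\Lambda^{c_2}\xrightarrow{\partial_2}\Lambda^{c_1}\xrightarrow{\partial_1}\Lambda\to 0$ over the Noetherian domain $\Lambda=\Z[t,t^{-1}]$, with $H_0(\widetilde K)=\Z=\Lambda/(t-1)$, $H_1(\widetilde K)=0$, and $H_2(\widetilde K)=\pi_2(K)=\ker\partial_2$. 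The key computation is a rank count. First, $K$ being a mod-$p$ homology circle with $\pi_1(K)\cong\Z$ gives $H_*(K;\Z)\cong H_*(S^1;\Z)$: the group $H_2(K)$ is free abelian, being a subgroup of the cellular $2$-chains, and $H_2(K)\otimes\F_p=H_2(K;\F_p)=0$ (the $\mathrm{Tor}$ term vanishes since $H_1(K)=\Z$) forces $H_2(K)=0$, so $\chi(K)=0$. Tensoring the complex above with the fraction field $\Q(t)$ and comparing Euler characteristics yields $\sum_i(-1)^i\,\mathrm{rank}_\Lambda H_i(\widetilde K)=c_0-c_1+c_2=\chi(K)=0$; since $\mathrm{rank}_\Lambda H_0=\mathrm{rank}_\Lambda H_1=0$, we conclude $\mathrm{rank}_\Lambda H_2(\widetilde K)=0$. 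Thus $\pi_2(K)=\ker\partial_2$ is a $\Lambda$-torsion module, but as a submodule of the free module $\Lambda^{c_2}$ over the domain $\Lambda$ it is also torsion-free, so $\pi_2(K)=0$.

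Finally, $\widetilde K$ is a simply connected $2$-complex with $H_2(\widetilde K)=\pi_2(K)=0$ and $H_i(\widetilde K)=0$ for $i\ge 3$ by dimension, hence contractible by the Hurewicz and Whitehead theorems. Therefore $K$ is a $K(\Z,1)$, and any map $S^1\to K$ inducing an isomorphism on $\pi_1$ is a homotopy equivalence between aspherical spaces, giving $K\simeq S^1$. I expect the rank/torsion argument over $\Z[t,t^{-1}]$ to be the crux; the reduction of $\pi_1(K)$ to $\Z$ is routine given the earlier deficiency results.
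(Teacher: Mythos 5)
Your proof is correct, and its first half coincides with the paper's: like the paper, you pin down $\pi_1(K)$ by playing Proposition~\ref{prop:Deficiency-1} (which gives $\mathrm{def}(\pi_1(K))=m$ from $2$-dimensionality) against Lemma~\ref{Lem:Deficiency-Abelian} (which gives $\mathrm{def}(A)=1-\binom{k+1}{2}$ for the groups allowed by Proposition~\ref{prop:Homology-Rose-Abelian}), forcing $m=1$, $k=0$ and $\pi_1(K)\cong\Z$; your detour through the preceding corollary to get $m=1$ first is an immaterial reordering. Where you genuinely diverge is the final step. The paper simply invokes Wall's theorem (Theorem~\ref{Thm:Wall}): $K$ is homotopy equivalent to a bouquet of $1$- and $2$-spheres, and $b_1(K;\F_p)=1$ together with $b_2(K;\F_p)=0$ leaves only $S^1$. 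You instead prove asphericity directly: after showing $H_2(K;\Z)=0$ (it is free abelian as $\ker\partial_2$, and $H_2(K)\otimes\F_p=0$ with vanishing Tor term since $H_1(K)\cong\Z$), hence $\chi(K)=0$, you tensor the cellular chain complex of $\widetilde K$ over $\Lambda=\Z[t,t^{-1}]$ with the fraction field $\Q(t)$, deduce $\mathrm{rank}_\Lambda H_2(\widetilde K)=0$ from the Euler characteristic count (using that $H_0(\widetilde K)\cong\Lambda/(t-1)$ is $\Lambda$-torsion), and conclude $\pi_2(K)=H_2(\widetilde K)=0$ because $\ker\partial_2$ is a torsion-free submodule of a free module over a domain; Hurewicz--Whitehead then gives $K\simeq K(\Z,1)=S^1$. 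This argument is sound as written (flatness of $\Q(t)$ over $\Lambda$ is what makes homology commute with the base change). What each route buys: the paper's citation of Wall is shorter and disposes of possible $S^2$ summands in one stroke, while your argument is self-contained, in effect reproving exactly the special case of Wall's classification that is needed, and it isolates a reusable fact slightly stronger than the corollary demands, namely that any finite $2$-complex with $\pi_1\cong\Z$ and $\chi=0$ is aspherical.
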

    \begin{proof}
     The sufficiency is trivial. For the necessity, let $K$ be a mod-$p$ homology
     rose with $m$-petals. By 
     Proposition~\ref{prop:Homology-Rose-Abelian}, we 
     can assume that
    $\pi_1(K)= H_1(K) = \Z \oplus \Z_{d_1}\oplus \Z_{d_2}\oplus\cdots \oplus \Z_{d_k}$ where $d_1>1$, $d_1 | d_2 | \cdots | d_k$ and $p\nmid d_i$, $1\leq i \leq k$.
    Then since $K$ is $2$-dimensional, 
    Proposition~\ref{prop:Deficiency-1} 
     and Lemma~\ref{Lem:Deficiency-Abelian}
     imply that 
       \[ 1-\binom{k+1}{2} =  \mathrm{def}(\pi_1(K)) = m.\] 
       This forces
    $m=1$ and $k=0$, and hence $\pi_1(K)\cong \Z$. 
    Finally, Theorem~\ref{Thm:Wall} below implies that
       $K$ must be homotopy equivalent to $S^1$.
    \end{proof}
    
     \begin{thm}[see~\cite{Wall65} Proposition 3.3] \label{Thm:Wall}
    Every compact, connected $2$-complex with free fundamental
  group is homotopy equivalent to a finite bouquet of $1$- and
  $2$-dimensional spheres.
  \end{thm}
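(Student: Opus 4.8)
The plan is to prove the statement by analysing the second homotopy group $\pi_2(K)$ as a module over the integral group ring $\Z[F]$, where $F=\pi_1(K)$ is free of rank $\mu=b_1(K;\Q)$, and then to realize the homotopy type of $K$ by an explicit bouquet via Whitehead's theorem. First I would reduce to the case where $K$ has a single $0$-cell, so that the cellular chain complex of the universal cover $\widetilde{K}$, regarded over $\Z[F]$, has the form $0\to C_2\to C_1\to C_0\to 0$ with each $C_i$ a finitely generated free $\Z[F]$-module. Since $\widetilde{K}$ is a simply connected $2$-complex, the Hurewicz theorem gives $\pi_2(K)\cong\pi_2(\widetilde{K})\cong H_2(\widetilde{K})=\ker(\partial_2)\subseteq C_2$, a $\Z[F]$-submodule of a finitely generated free module, while $H_1(\widetilde{K})=0$ and $H_0(\widetilde{K})=\Z$.

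The crux is to show that $\pi_2(K)$ is a \emph{free} $\Z[F]$-module. Exactness of the augmented chain complex yields
\[ 0\longrightarrow \pi_2(K)\longrightarrow C_2\longrightarrow C_1\longrightarrow C_0\longrightarrow \Z\longrightarrow 0, \]
a finite free resolution of the trivial module $\Z$. Because $F$ is free, $\Z$ has projective dimension $1$ over $\Z[F]$: concretely the augmentation ideal $I\subset\Z[F]$ is free, being free on $\{g-1\}$ for $g$ ranging over a basis of $F$. Splitting the resolution into short exact sequences $0\to I\to C_0\to\Z\to 0$, $0\to Z_1\to C_1\to I\to 0$ and $0\to\pi_2(K)\to C_2\to Z_1\to 0$, the freeness of $I$ forces the second sequence to split, so $Z_1$ is projective, and then the third sequence splits, exhibiting $\pi_2(K)$ as a direct summand of $C_2$; hence $\pi_2(K)$ is a finitely generated projective $\Z[F]$-module. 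Finally I would invoke the fact that every finitely generated projective module over $\Z[F]$ with $F$ free is free (equivalently $\widetilde{K}_0(\Z[F])=0$), so $\pi_2(K)\cong\Z[F]^{\nu}$ for some $\nu\geq 0$. Taking coinvariants identifies $\nu=\mathrm{rank}_\Z H_2(K;\Z)=b_2(K;\Q)$.

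With freeness in hand I would realize the homotopy type. Set $W=(\bigvee_\mu S^1)\vee(\bigvee_\nu S^2)$ and build a map $f:W\to K$ sending the circles to loops representing a free basis of $F=\pi_1(K)$ and the spheres to maps $S^2\to K$ representing a $\Z[F]$-basis of $\pi_2(K)$. Then $f_\ast:\pi_1(W)\to\pi_1(K)$ is an isomorphism of free groups of rank $\mu$, so $f$ lifts to a map $\widetilde{f}:\widetilde{W}\to\widetilde{K}$ of universal covers. The universal cover $\widetilde{W}$ is a contractible tree (the cover of $\bigvee_\mu S^1$) with copies of $S^2$ attached, so $H_\ast(\widetilde{W})$ is concentrated in degrees $0$ and $2$ with $H_2(\widetilde{W})=\Z[F]^{\nu}$ carrying exactly the chosen basis; by construction $\widetilde{f}_\ast$ sends this basis to the chosen basis of $H_2(\widetilde{K})=\pi_2(K)$ and is therefore an isomorphism, while $H_0$ and $H_1$ agree trivially. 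Thus $\widetilde{f}$ is a homology isomorphism between simply connected CW-complexes, hence a homotopy equivalence by Whitehead's theorem, and consequently $f$ is a homotopy equivalence.

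The main obstacle is the algebraic input in the second step, namely that finitely generated projective modules over $\Z[F]$ are free. The projective-dimension bookkeeping is routine once one knows $\mathrm{cd}(F)=1$, but the passage from projective to free is genuinely nontrivial and is precisely where the special structure of group rings of free groups enters; everything else (the Hurewicz and Whitehead theorems, the splitting arguments, and the construction of $f$) is formal.
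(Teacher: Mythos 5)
The paper itself gives no proof of this statement---it is quoted directly from Wall \cite{Wall65}, Proposition 3.3---and your argument is correct and essentially reconstructs Wall's original one: split the free resolution $0\to\pi_2(K)\to C_2\to C_1\to C_0\to\Z\to 0$ using the freeness of the augmentation ideal of $\Z[F]$ to show $\pi_2(K)$ is finitely generated projective, invoke Bass's theorem that finitely generated projective $\Z[F]$-modules over a free group $F$ are free, and conclude by mapping in a wedge of circles and spheres and applying Whitehead's theorem to universal covers. One small correction: your parenthetical ``equivalently $\widetilde{K}_0(\Z[F])=0$'' is inaccurate, since vanishing of $\widetilde{K}_0$ only gives stable freeness of $\pi_2(K)$, which would not suffice to build the bouquet for $K$ itself (only for $K$ wedged with extra $2$-spheres); you genuinely need the stronger statement that projectives over $\Z[F]$ are free, which is exactly what Bass proved.
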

   \n
   
  For mod-$p$ acyclic spaces, we can easily obtain the following result from
  Proposition~\ref{Prop:mod-p-Acyclic} and Lemma~\ref{Lem:Deficiency-Abelian}.\nn
    
   \begin{prop} \label{Prop:Acyclic-1}
     A finitely generated abelian group $A$ can be realized as the fundamental group 
        of a mod-$p$ acyclic space if and only if $A$ is finite and has no $p$-torsion.
   \end{prop}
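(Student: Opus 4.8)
The plan is to read off the realizability criterion from Proposition~\ref{Prop:mod-p-Acyclic}: with $R=\F_p$, a group $A$ is the fundamental group of a mod-$p$ acyclic space if and only if $H_1(A;\F_p)=0$ and $H_2(A;\F_p)=0$. So the whole task reduces to deciding which finitely generated abelian groups satisfy both vanishing conditions, and I would handle $H_1$ and $H_2$ in turn.

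First I would dispose of $H_1$. Write $A=\Z^{r}\oplus \Z_{d_1}\oplus\cdots\oplus\Z_{d_k}$ with $d_1>1$ and $d_1\mid\cdots\mid d_k$. Since $A$ is abelian, $H_1(A;\Z)\cong A$, and the universal coefficient theorem gives $H_1(A;\F_p)\cong A\otimes\F_p\cong \F_p^{\,r}\oplus\bigoplus_{i=1}^{k}(\Z_{d_i}\otimes\F_p)$. As $\Z_{d_i}\otimes\F_p$ equals $\F_p$ when $p\mid d_i$ and vanishes otherwise, the condition $H_1(A;\F_p)=0$ holds exactly when $r=0$ and $p\nmid d_i$ for all $i$, that is, when $A$ is finite and has no $p$-torsion. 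This already yields the \emph{only if} direction.

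Next I would check that these two conditions make $H_2(A;\F_p)=0$ automatic, so that the second hypothesis of Proposition~\ref{Prop:mod-p-Acyclic} costs nothing. I would feed $r=0$ into formula~\eqref{Equ:H2-Abelian} for $M(A)\cong H_2(A;\Z)$: the result is a finite abelian group whose torsion is supported at the primes dividing the $d_i$, hence is $p$-torsion free under our hypothesis. Then universal coefficients, $H_2(A;\F_p)\cong\bigl(H_2(A;\Z)\otimes\F_p\bigr)\oplus\mathrm{Tor}(H_1(A;\Z),\F_p)$, vanishes term by term: the first summand because a finite $p$-torsion-free group tensored with $\F_p$ is zero, the second because $H_1(A;\Z)\cong A$ has no $p$-torsion. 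Combined with the $H_1$ computation, Proposition~\ref{Prop:mod-p-Acyclic} then supplies a mod-$p$ acyclic space with fundamental group $A$, giving the \emph{if} direction.

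I do not expect a genuine obstacle: once the explicit description~\eqref{Equ:H2-Abelian} of the Schur multiplicator is available, the whole argument is universal-coefficient bookkeeping. The one conceptual point worth flagging is that the vanishing of $H_1(A;\F_p)$ alone already characterizes $A$ completely, so the second defining condition $H_2(A;\F_p)=0$ turns out to be redundant for abelian $A$ and the criterion collapses to a single condition on $A$.
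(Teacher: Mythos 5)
Your proof is correct and follows essentially the same route as the paper, which obtains the proposition from Proposition~\ref{Prop:mod-p-Acyclic} together with the multiplicator formula~\eqref{Equ:H2-Abelian} established inside Lemma~\ref{Lem:Deficiency-Abelian}; your universal-coefficient computations simply make that cited sketch explicit. Your closing observation that $H_1(A;\F_p)=0$ alone already forces $A$ to be finite without $p$-torsion, so that the condition $H_2(A;\F_p)=0$ is automatic for abelian $A$, is a correct refinement of the same argument rather than a different approach.
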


   \begin{cor} \label{Cor:Mod-Acyclic-Fund}
      A $2$-dimensional finite complex $K$ is a
       mod-$p$ acyclic space with abelian fundamental group
      if and only if $K$ is homotopy equivalent to a pseudo-projective plane $\mathbb{P}_m$
      with $p \nmid m$.
    \end{cor}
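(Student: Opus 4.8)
The plan is to mirror the proof of Corollary~\ref{Cor:Homology-Rose-S1}, replacing Lemma~\ref{Lem:H_1} and Theorem~\ref{Thm:Wall} by Proposition~\ref{Prop:Acyclic-1} together with a homotopy classification of $2$-complexes over finite cyclic groups. The sufficiency is immediate: for $p \nmid m$ the pseudo-projective plane $\mathbb{P}_m$ was already observed to be mod-$p$ acyclic, and $\pi_1(\mathbb{P}_m) \cong \Z_m$ is abelian (with $\mathbb{P}_1 \simeq \ast$). For the necessity, suppose $K$ is a finite $2$-complex which is mod-$p$ acyclic with $\pi_1(K) = A$ abelian. By Proposition~\ref{Prop:Acyclic-1}, $A$ is finite and has no $p$-torsion, so we may write $A = \Z_{d_1} \oplus \cdots \oplus \Z_{d_k}$ with $d_1 > 1$, $d_1 \mid \cdots \mid d_k$ and $p \nmid d_i$; in particular its free rank is $r = 0$.

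Next I would pin down $k$. Since $K$ is a finite $2$-dimensional mod-$p$ acyclic complex, Proposition~\ref{prop:Deficiency-1} gives $\mathrm{def}(A) = 0$, while Lemma~\ref{Lem:Deficiency-Abelian} with $r = 0$ gives $\mathrm{def}(A) = -\binom{k}{2}$. Hence $\binom{k}{2} = 0$, forcing $k \le 1$. If $k = 0$ then $A$ is trivial, so $K$ is simply connected; since $H_1(K) = 0$, the universal coefficient theorem gives $H_2(K;\F_p) = H_2(K) \otimes \F_p$, and as $H_2(K)$ is a finitely generated free abelian group this vanishes only when $H_2(K) = 0$. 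Thus $K$ is a simply connected integrally acyclic complex, hence contractible by Hurewicz and Whitehead, i.e.\ $K \simeq \ast = \mathbb{P}_1$.

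It remains to treat $k = 1$, where $A \cong \Z_m$ with $m = d_1 > 1$ and $p \nmid m$. Because the Euler characteristic does not depend on the coefficient field, mod-$p$ acyclicity gives $\chi(K) = b_0(K;\F_p) - b_1(K;\F_p) + b_2(K;\F_p) = 1 = \chi(\mathbb{P}_m)$. Thus $K$ and $\mathbb{P}_m$ are finite $2$-complexes with the same finite cyclic fundamental group $\Z_m$ and the same Euler characteristic. The concluding step---and the only non-formal ingredient---is to invoke the homotopy classification of finite $2$-complexes with finite cyclic fundamental group, namely that such complexes are determined up to homotopy equivalence by $\pi_1$ and $\chi$; this yields $K \simeq \mathbb{P}_m$ and finishes the proof. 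The main obstacle is precisely this last uniqueness statement: it is the analogue for $\Z_m$ of the role played by Theorem~\ref{Thm:Wall} (free $\pi_1$) in Corollary~\ref{Cor:Homology-Rose-S1}, and unlike the deficiency bookkeeping it cannot be extracted from the elementary results established so far, so it must be imported as a known classification theorem for $2$-complexes over finite cyclic groups.
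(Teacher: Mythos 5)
Your proposal is correct and follows essentially the same route as the paper: the identical deficiency bookkeeping (Proposition~\ref{prop:Deficiency-1} plus Lemma~\ref{Lem:Deficiency-Abelian}) forces $\pi_1(K)$ to be cyclic, and the classification you flag as the one non-formal import is exactly the paper's Theorem~\ref{Thm:Mod-p-2dim} (Dyer--Sieradski), from which $\chi(K)=1$ (equivalently $H_2(K)=0$, since $H_2$ of a $2$-complex is free abelian) rules out any $S^2$ wedge summands. The only differences are cosmetic: you handle the trivial-group case $k=0$ explicitly via Hurewicz--Whitehead (giving $K\simeq \mathbb{P}_1$), which the paper leaves implicit, and you count spheres by Euler characteristic rather than by the rank of $H_2(K)$ directly.
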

    \begin{proof}
      The sufficiency is trivial. For the necessity, by 
     Proposition~\ref{prop:Homology-Rose-Abelian} we 
     can assume that
    $\pi_1(K)= H_1(K) =  \Z_{d_1}\oplus \Z_{d_2}\oplus\cdots \oplus \Z_{d_k}$ where
     $d_1>1$, $d_1 | d_2 | \cdots | d_k$ and $p\nmid d_i$, $1\leq i \leq k$.
     Then since $K$ is $2$-dimensional, 
    Proposition~\ref{prop:Deficiency-1} 
     and Lemma~\ref{Lem:Deficiency-Abelian}
     imply that 
       \[ \binom{k}{2} =  \mathrm{def}(\pi_1(K)) = 0.\] 
      So $k$ has to be $1$, i.e. $\pi_1(K)$ is a finite cyclic group without $p$-torsion.
      Then the corollary follows from Theorem~\ref{Thm:Mod-p-2dim} below.
    \end{proof}

  \begin{thm}[see~\cite{DyerSier73}] \label{Thm:Mod-p-2dim}
    Let $K$ be a connected $2$-dimensional finite CW-complex with
      fundamental group $\Z_m$. Then $X$ 
    has the homotopy type of the wedge sum $\mathbb{P}_m \vee S^2 \vee\cdots\vee S^2$
     of the pseudo-projective plane $\mathbb{P}_m$ and $rank$ $H_2(K)$-copies of 
     the $2$-sphere $S^2$.
  \end{thm}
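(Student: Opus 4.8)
The plan is to classify $K$ up to homotopy through the algebra of the cellular chain complex of its universal cover $\widetilde K$, isolating the arithmetic of the group ring $\Z[\Z_m]$ as the essential ingredient. First I would normalize $K$: collapsing a spanning tree, we may assume $K$ has a single $0$-cell, so that $K$ is the presentation complex of a presentation $\langle x_1,\dots,x_n \mid \rho_1,\dots,\rho_q\rangle$ of $\Z_m$. Since $H_1(K)\cong\Z_m$ is torsion, $b_1(K)=0$ and $\chi(K)=1-n+q=1+\mathrm{rank}\,H_2(K)$, so the number of $2$-cells exceeds the number of $1$-cells by exactly $r:=\mathrm{rank}\,H_2(K)$. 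The target $\mathbb{P}_m\vee S^2\vee\cdots\vee S^2$ ($r$ copies) has the same fundamental group and the same Euler characteristic, so the whole problem is to promote this numerical coincidence to a homotopy equivalence.

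The next step is to compute $\pi_2$. Writing $G=\Z_m$ and identifying $\Z[G]=\Z[t]\slash(t^m-1)$, the cellular chain complex of $\widetilde K$ is a complex of finitely generated free modules
\[ 0\to \Z[G]^{q}\xrightarrow{\partial_2}\Z[G]^{n}\xrightarrow{\partial_1}\Z[G]\to 0, \]
with $\pi_2(K)=H_2(\widetilde K)=\ker\partial_2$ and $\ker\partial_1\slash\mathrm{im}\,\partial_2=H_1(\widetilde K)=0$. For the minimal complex $\mathbb{P}_m=\langle a\mid a^m\rangle$ the boundary maps are $\partial_1=t-1$ and $\partial_2=N:=1+t+\cdots+t^{m-1}$, and the standard $2$-periodic resolution
\[ \cdots\xrightarrow{t-1}\Z[G]\xrightarrow{N}\Z[G]\xrightarrow{t-1}\Z[G]\xrightarrow{\varepsilon}\Z\to 0 \]
identifies $\pi_2(\mathbb{P}_m)=\ker N = I$, the augmentation ideal. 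Both $\pi_2(K)$ and $\pi_2(\mathbb{P}_m)=I$ arise as the appropriate syzygy of the trivial module $\Z$ over $\Z[G]$; applying the generalized Schanuel lemma to the two partial free resolutions of $\Z$, they become isomorphic after adding free summands, and comparing $\Z$-ranks, namely $\mathrm{rank}_{\Z}\pi_2(K)=mr+(m-1)=\mathrm{rank}_{\Z}(I\oplus\Z[G]^r)$, pins the free part down and yields a \emph{stable} isomorphism $\pi_2(K)\oplus\Z[G]^a\cong I\oplus\Z[G]^{r}\oplus\Z[G]^a$.

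The crux, and the step I expect to be the main obstacle, is cancellation: upgrading this stable isomorphism to an honest isomorphism $\pi_2(K)\cong I\oplus\Z[G]^{r}$. This is precisely where the special arithmetic of $\Z[\Z_m]$ enters, and where the analogous statement fails for general fundamental groups (producing exotic $2$-complexes). I would invoke the cancellation theorems for lattices over $\Z[\Z_m]$ in the style of Swan and Jacobinski–Bass (the relevant orders satisfy an Eichler-type condition), which allow the free summand $\Z[G]^a$ to be cancelled; verifying that our modules meet the hypotheses of such a cancellation result is the technical heart of the argument.

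Finally, I would realize the algebra geometrically. Using the module isomorphism $\pi_2(K)\cong\pi_2(L)$ for $L:=\mathbb{P}_m\vee(\vee_r S^2)$, together with the matching identification of $\pi_1$, I would build a $\Z[G]$-chain map between the cellular chain complexes of $\widetilde L$ and $\widetilde K$ inducing isomorphisms on $H_0,H_1,H_2$; since both complexes consist of free modules, such a homology isomorphism is automatically a chain homotopy equivalence. The homotopy type of a finite $2$-complex is determined by the chain homotopy type of the cellular chain complex of its universal cover together with the $\pi_1$-action (equivalently, by the algebraic $2$-type $(\pi_1,\pi_2,k)$), so this chain homotopy equivalence is induced by a genuine homotopy equivalence $K\simeq \mathbb{P}_m\vee S^2\vee\cdots\vee S^2$, with $r=\mathrm{rank}\,H_2(K)$ copies of $S^2$, as claimed.
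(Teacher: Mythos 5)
You should first note that the paper contains no proof of this statement at all: it is quoted verbatim from Dyer--Sieradski \cite{DyerSier73}, so your attempt can only be measured against the cited classification argument. Up through the cancellation step your outline is sound: the reduction to a presentation complex, the identification $\pi_2(K)=\ker\partial_2$ and $\pi_2(\mathbb{P}_m)\cong I$ via the $2$-periodic resolution, the generalized Schanuel lemma plus the rank count $m\chi(K)-1=mr+(m-1)$, and the appeal to Jacobinski--Eichler cancellation are all correct; since $\Q[\Z_m]$ is commutative, no totally definite quaternion component occurs, the Eichler condition holds, and the free summand may indeed be cancelled to give $\pi_2(K)\cong I\oplus\Z[G]^r$.

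The genuine gap is in your final step, and it is not where you located the ``technical heart.'' From an abstract module isomorphism $\pi_2(K)\cong\pi_2(L)$ you cannot simply ``build a $\Z[G]$-chain map inducing isomorphisms on $H_0,H_1,H_2$'': a chain map lifted over the identity of $\Z$ induces \emph{some} map on $H_2$, and the obstruction to realizing a \emph{prescribed} isomorphism on $H_2$ is exactly the comparison of the Yoneda classes of the two exact sequences $0\to\pi_2\to C_2\to C_1\to C_0\to\Z\to 0$ in $\mathrm{Ext}^3_{\Z[G]}(\Z,\pi_2)\cong H^3(\Z_m;\pi_2)$, i.e.\ the algebraic $k$-invariant. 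The pair $(\pi_1,\pi_2)$ does not determine the chain homotopy type; the $k$-invariants must be matched up to the action of (semi)linear automorphisms of $\pi_2$, and this is precisely the step that fails for non-cyclic fundamental groups (the Metzler--Sieradski bias phenomenon), so it cannot be waved through. To close the argument for $\Z_m$ you must compute $H^3(\Z_m;I\oplus\Z[G]^r)\cong H^3(\Z_m;I)\cong H^2(\Z_m;\Z)\cong\Z_m$, observe that exactness with free middle terms forces both classes to be generators, and then show the relevant automorphisms act transitively on generators --- geometrically, that every $q$ with $(q,m)=1$ is realized by the self-map of $\mathbb{P}_m$ extending $a\mapsto a^q$. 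This unit-realization argument is the actual core of the Dyer--Sieradski proof and is entirely missing from your proposal. (Your background principle that the polarized chain homotopy type of $C_*(\widetilde{K})$ determines the homotopy type of a finite $2$-complex is correct but itself nontrivial: it follows from the MacLane--Whitehead classification of $2$-types together with the vanishing of the lifting obstructions, which live in $H^{\geq 4}$ of a $2$-complex; see Johnson \cite{John03}. It should be cited or proved, not asserted as ``equivalently.'')
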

  \nn

\section{$\F$-gap of a group}
    For any finitely presentable group $G$ and a field $\F$,
    Lemma~\ref{Lem:Deficiency} tells us that       
         $$ \mathrm{def}(G) \leq b_1(G;\F) - b_2(G;\F).$$
   Notice that both sides of the inequality are invariants of the group $G$. 
  Let
      $$ \mathrm{gap}(G;\F) := b_1(G;\F) - b_2(G;\F) - \mathrm{def}(G) \in \Z_{\geq 0}. $$ 
     We call $\mathrm{gap}(G;\F)$ the \emph{$\F$-gap} of $G$ which
      is also an invariant of $G$.

  \begin{exam} \label{Exam:Gap-Abelian}
     Let $A=\Z^{r} \oplus \Z_{d_1}\oplus \Z_{d_2}\oplus\cdots \oplus \Z_{d_k}$  
   where $d_1>1$ and $d_1 | d_2 | \cdots | d_k$. 
   First of all, we compute $\mathrm{gap}(A, \Q)$ where $\Q$ is the rational numbers.
   By Lemma~\ref{Lem:Deficiency-Abelian} and Equation~\eqref{Equ:H2-Abelian}, we obtain
   \[ \mathrm{gap}(A, \Q)= r -\binom{r}{2} - \left(r-\binom{r+k}{2} \right) 
    = \binom{r+k}{2} - \binom{r}{2}.
    \] 
    So $\mathrm{gap}(A, \Q)=0$ if and only if $k=0$ (i.e. $A$ is free abelian) 
    or $r=0$ and $k=1$ (i.e. $A$ is 
    a cyclic group).
    \n
   
   Next, we compute $\mathrm{gap}(A,\F_p)$ for any prime $p$.\n
   \begin{itemize}
    \item If $k=0$, i.e. $A\cong \Z^r$,  $\mathrm{gap}(A;\F_p) =0$.\n
    
    \item If $k\geq 1$, assume that $p | d_l $ but $p\nmid d_{l-1}$ 
     for some $1 \leq l \leq k$, then 
        $$b_1(A;\F_p)= r+k-l+1,$$ 
   $$ b_2(A;\F_p)= (k-l+1) + \binom{r}{2} + \sum^k_{j=l} (r+k-j).$$
    So by Lemma~\ref{Lem:Deficiency-Abelian}, we obtain
    \begin{align*}
       \mathrm{gap}(A;\F_p) &= \binom{r+k}{2} -  \binom{r}{2} - \sum^k_{j=l} (r+k-j) \\
        &=  \frac{1}{2} (2r+2k-l) (l-1).
    \end{align*}   
    Then $\mathrm{gap}(A;\F_p)=0$ if and only if $l=1$, i.e. 
    the torsion of $A$ is a $p$-group.\n
    
  \item If $k\geq 1$ and $p\nmid d_i$ for all $1\leq i \leq k$, we have 
  $\mathrm{gap}(A;\F_p) = \binom{r+k}{2} -  \binom{r}{2}$.
  In this case,  $\mathrm{gap}(A;\F_p) = 0$ if and only if $r=0$ and $k=1$, i.e. $A$ is 
  a finite cyclic group with no $p$-torsion.
    \end{itemize}
    
   By the above discussion, 
 both $\mathrm{gap}(A;\Q)$ and $\mathrm{gap}(A,\F_p)$ can take arbitrarily large values
  among finitely generated abelian groups.
 \end{exam}
   
   \n

   Next, let us interpret $\F$-gap of a group from some other viewpoints.
   
   \begin{prop} \label{Prop:Gap-G}
     For a finitely presentable group $G$ and a field $\F$,
          \begin{align*}
            \mathrm{gap}(G;\F) &= \mathrm{min}\{ b_2(K_{\mathcal{P}};\F) \, ;  
     \, \mathcal{P} \ \text{is a finite presentation of}\ G \} - b_2(G;\F) \\
     &= \mathrm{min}\{ b_2(K;\F) \, ;  
     \, K \ \text{is a connected finite $2$-complex with}\ \pi_1(K)\cong G \}\\
     & \ \ \ \  - b_2(G;\F).
     \end{align*}
    \end{prop}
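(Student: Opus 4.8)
The plan is to prove both equalities by relating the second Betti number of a finite $2$-complex to its Euler characteristic, and then reducing an arbitrary finite $2$-complex to a presentation complex. The starting observation is that for any connected finite $2$-complex $K$ with $\pi_1(K)\cong G$ one has $b_0(K)=1$, $b_i(K;\F)=0$ for $i\geq 3$, and $b_1(K;\F)=b_1(G;\F)$ (since $H_1(K;\Z)\cong G_{\mathrm{ab}}$ depends only on $\pi_1$, and $\F$ is a field, so $H_1(K;\F)\cong G_{\mathrm{ab}}\otimes\F$). Hence the Euler characteristic gives
\[ b_2(K;\F) = \chi(K) - 1 + b_1(G;\F). \]

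First I would establish the first equality. Given a finite presentation $\mathcal{P}=\langle a_1,\dots,a_n \mid r_1,\dots,r_m\rangle$ of $G$, the presentation complex $K_{\mathcal{P}}$ has one $0$-cell, $n$ $1$-cells, and $m$ $2$-cells, so $\chi(K_{\mathcal{P}})=1-n+m=1-\mathrm{def}(\mathcal{P})$. Substituting into the displayed formula yields
\[ b_2(K_{\mathcal{P}};\F) = b_1(G;\F) - \mathrm{def}(\mathcal{P}). \]
Taking the minimum over all finite presentations $\mathcal{P}$ and using $\mathrm{def}(G)=\max_{\mathcal{P}}\mathrm{def}(\mathcal{P})$ (which is attained and finite, being bounded above by Lemma~\ref{Lem:Deficiency}), I obtain $\min_{\mathcal{P}} b_2(K_{\mathcal{P}};\F) = b_1(G;\F) - \mathrm{def}(G)$. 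Subtracting $b_2(G;\F)$ recovers exactly $\mathrm{gap}(G;\F)=b_1(G;\F)-b_2(G;\F)-\mathrm{def}(G)$, which is the first equality.

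For the second equality, the inequality ``$\leq$'' is immediate, since every presentation complex $K_{\mathcal{P}}$ is itself a connected finite $2$-complex with fundamental group $G$, so the minimum over all such $2$-complexes is no larger than the minimum over presentation complexes. For ``$\geq$'', the key move is to reduce an arbitrary connected finite $2$-complex $K$ with $\pi_1(K)\cong G$ to a presentation complex without changing $b_2$. I would choose a maximal spanning tree $T$ of the $1$-skeleton of $K$ and collapse it: since $T$ is contractible, the quotient map $K\to K/T$ is a homotopy equivalence, and $K/T$ is a finite $2$-complex with a single $0$-cell whose $1$-skeleton is a bouquet of circles. Thus $K/T=K_{\mathcal{P}}$ is precisely the presentation complex of a finite presentation $\mathcal{P}$ of $G$, and
\[ b_2(K;\F)=b_2(K/T;\F)=b_2(K_{\mathcal{P}};\F)\geq \min_{\mathcal{P}'} b_2(K_{\mathcal{P}'};\F). \]
Minimizing the left side over all such $K$ gives the reverse inequality, and hence equality.

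The routine bookkeeping is the Euler-characteristic computation; the one step deserving care is the tree-collapse reduction, where I must confirm that collapsing a contractible subcomplex yields an honest CW presentation complex (one $0$-cell, a wedge of circles as $1$-skeleton, $2$-cells attached along words) with the same fundamental group and the same homotopy type, so that $b_2$ is genuinely preserved. I do not expect a real obstacle here, as these are standard facts about CW-complexes; the actual content of the proposition is the clean translation of the purely group-theoretic quantity $\mathrm{gap}(G;\F)$ into the minimal ``excess'' second homology carried by any finite $2$-complex modeling $G$.
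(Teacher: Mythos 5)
Your proof is correct and follows essentially the same route as the paper's: the paper also derives the first equality from the Euler-characteristic identity $b_1(K_{\mathcal{P}};\F)-b_2(K_{\mathcal{P}};\F)=\mathrm{def}(\mathcal{P})$ and minimization over finite presentations. The only difference is that you spell out the spanning-tree collapse justifying the second equality, which the paper leaves implicit behind the remark that it suffices to prove the first equality; your flagged caveat there (that attaching maps of $K/T$ may need a cellular homotopy to become genuine word maps) is handled by standard facts and does not affect $b_2$.
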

    \begin{proof}
       It is sufficient to prove the first equality of the proposition.
      Note that for any finite presentation $\mathcal{P}$ of $G$,
      $ b_1(K_{\mathcal{P}};\F) = b_1(G;\F)$. 
      So we have
         \begin{align*} 
          b_2(K_{\mathcal{P}};\F) - b_2(G;\F) &= 
         b_2(K_{\mathcal{P}};\F) - b_1(K_{\mathcal{P}};\F) - (b_2(G;\F) -b_1(G;\F))\\
         &= b_1(G;\F)) - b_2(G;\F) - 
         (b_1(K_{\mathcal{P}};\F) - b_2(K_{\mathcal{P}};\F))
          \end{align*}
          Observe that $b_1(K_{\mathcal{P}};\F) - b_2(K_{\mathcal{P}};\F)$
          coincides with $\mathrm{def}(\mathcal{P})$. Then
                    \[ \mathrm{min}\{ - (b_1(K_{\mathcal{P}};\F) - 
                          b_2(K_{\mathcal{P}};\F)) \, ;\,
            \mathcal{P} \ \text{is a finite presentation of}\ G  \}  = - \mathrm{def}(G). \]
        This implies the first equality of the proposition.
    \end{proof}

    For any group $G$, the \emph{geometric dimension} $\mathrm{gd}(G)$ of $G$ is 
    the minimal dimension of the $K(G,1)$-complex. 
    The \emph{cohomological dimension} $\mathrm{cd}(G)$ of $G$ is the minimal length of a
    projective resolution of (the trivial $\Z[G]$-module) $\Z$. It is easy to see that
    $\mathrm{cd}(G) = \mathrm{cd}(K(G,1))$ and $\mathrm{gd}(G) \geq \mathrm{cd}(G)$.\n
     
       The following corollary is immediate from Proposition~\ref{Prop:Gap-G}.\n
    
   \begin{cor} \label{Cor:gd(G)}
   Let $G$ be a finitely presentable group
  with $\mathrm{gap}(G;\F)>0$ for some field $\F$. Then
   the geometric dimension of $G$ is at least $3$.
   \end{cor}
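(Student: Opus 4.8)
The plan is to prove the contrapositive: assuming $\mathrm{gd}(G)\le 2$, I will show that $\mathrm{gap}(G;\F)=0$, which contradicts the hypothesis $\mathrm{gap}(G;\F)>0$ and forces $\mathrm{gd}(G)\ge 3$. The whole argument rests on the second description of the gap furnished by Proposition~\ref{Prop:Gap-G}, namely that $\mathrm{gap}(G;\F)$ equals the minimum of $b_2(K;\F)$ over all connected finite $2$-complexes $K$ with $\pi_1(K)\cong G$, minus the fixed quantity $b_2(G;\F)$. Thus it suffices to exhibit a single connected finite $2$-complex model of $G$ whose second $\F$-Betti number already equals $b_2(G;\F)$.

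First I would produce the right model. Since $\mathrm{gd}(G)\le 2$, the group $G$ admits a $2$-dimensional $K(G,1)$-complex, and in particular $\mathrm{cd}(G)\le 2$. Because $G$ is finitely presentable it is of type $F_2$, and combining $\mathrm{cd}(G)\le 2$ with type $F_2$ the finiteness theory of Wall~\cite{Wall65,Wall66} guarantees that $G$ is realized by a \emph{finite} $2$-dimensional $K(G,1)$-complex $L$. Concretely, for any finite presentation the $\pi_2$ of the presentation complex becomes a finitely generated projective $\Z[G]$-module once $\mathrm{cd}(G)\le 2$, and Wall's results then allow one to refine the finite $2$-skeleton of an aspherical model into a genuine finite aspherical $2$-complex. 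Securing this finite aspherical model is the only delicate point, and I expect it to be the main obstacle: asphericity and finiteness must be achieved simultaneously in dimension $2$, which is exactly the borderline phenomenon underlying the $\mathrm{D(2)}$-problem treated later in the paper.

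Once such an $L$ is in hand the conclusion is immediate. As $L$ is aspherical we have $H_*(L;\F)\cong H_*(G;\F)$, so in particular $b_2(L;\F)=b_2(G;\F)$. Since $L$ is a connected finite $2$-complex with $\pi_1(L)\cong G$, Proposition~\ref{Prop:Gap-G} gives
\[
   \mathrm{gap}(G;\F)\le b_2(L;\F)-b_2(G;\F)=0.
\]
As $\mathrm{gap}(G;\F)\in\Z_{\ge 0}$ by definition, this yields $\mathrm{gap}(G;\F)=0$, contradicting $\mathrm{gap}(G;\F)>0$. Hence no $2$-dimensional $K(G,1)$-complex for $G$ can exist, and $\mathrm{gd}(G)\ge 3$, as claimed.
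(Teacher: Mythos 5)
Your overall route is the paper's: argue the contrapositive, produce a finite $2$-dimensional aspherical model $L$ of $G$, and feed $b_2(L;\F)=b_2(G;\F)$ into Proposition~\ref{Prop:Gap-G} to force $\mathrm{gap}(G;\F)=0$. The paper states the corollary as immediate from Proposition~\ref{Prop:Gap-G} for exactly this reason: that proposition quantifies over \emph{all} connected finite $2$-complexes $K$ with $\pi_1(K)\cong G$, so $\mathrm{gap}(G;\F)>0$ says every such $K$ has $b_2(K;\F)>b_2(G;\F)$, whereas a finite $2$-dimensional $K(G,1)$ would give equality. Your final paragraph is fine; the problem is how you manufacture $L$.

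The genuine gap is the step you yourself flag and then wave through: the claim that finite presentability (type $F_2$) together with $\mathrm{cd}(G)\le 2$ lets ``Wall's finiteness theory'' produce a \emph{finite} $2$-dimensional $K(G,1)$. No such theorem exists. What Wall's results actually yield from those hypotheses is that $\pi_2$ of a finite presentation complex is a finitely generated projective $\Z[G]$-module, hence $G$ is of type $FP$ and its $K(G,1)$ is finitely dominated; to get a finite $K(G,1)$ at all one needs the Wall finiteness obstruction in $\widetilde{K}_0(\Z[G])$ to vanish, and even then Wall only provides a finite model of dimension $\max(3,\mathrm{cd}(G))$ --- compressing to dimension $2$ is precisely the hard part. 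Indeed the paper itself quotes Hillman: for $\mathrm{cd}(G)=2$, a finite $2$-dimensional $K(G,1)$ exists if and only if $G$ is \emph{efficient}, and it remarks that for finitely presentable $G$ the Eilenberg--Ganea conjecture is equivalent to ``$\mathrm{cd}(G)=2$ implies $\mathrm{gap}(G;\Q)=0$.'' Note that your intermediate argument uses only $\mathrm{cd}(G)\le 2$ and type $F_2$, so if it were correct it would prove the Eilenberg--Ganea conjecture for all finitely presentable groups, which the paper treats as open. Nor does the stronger hypothesis $\mathrm{gd}(G)\le 2$ rescue the step: an infinite $2$-dimensional aspherical complex only gives an infinitely generated free resolution of $\Z$ over $\Z[G]$, and by an Eilenberg swindle this puts no constraint on the finiteness obstruction or on efficiency. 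The honest repair is to read the corollary the way Proposition~\ref{Prop:Gap-G} forces: $\mathrm{gap}(G;\F)>0$ rules out any \emph{finite} $2$-dimensional $K(G,1)$ directly, with no appeal to Wall; if one insists on the classical definition of $\mathrm{gd}$ allowing infinite $K(G,1)$'s, then the deduction is not immediate, and your proof (like the paper's one-line assertion) leaves that case unaddressed.
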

  \n
   \begin{prop}  \label{prop:Interp-Gap}
  For a finitely presentable group $G$, the following statements are equivalent.
  \begin{itemize}
    \item[(i)] $\mathrm{gap}(G;\F)=0$.\n
    
    \item[(ii)] There exists a $K(G, 1)$ complex $X$ so that
  $X$ has only finitely many cells in each dimension and 
   the cellular boundary map $\partial_3 : C_3(X;\F) \rightarrow C_2(X;\F)$ is trivial.
   \item[(iii)] There exists a finite presentation $\mathcal{P}$ of $G$ so that 
         the map $h: \pi_2(K_{\mathcal{P}}) \rightarrow H_2(K_{\mathcal{P}},\F)$ 
          induced from the Hurewicz map is the zero map.
         \end{itemize}
   \end{prop}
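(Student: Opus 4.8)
The plan is to prove the cycle (ii) $\Rightarrow$ (i) $\Leftrightarrow$ (iii) $\Rightarrow$ (ii), with the equivalence (i) $\Leftrightarrow$ (iii) carrying the main content and resting on a single bookkeeping identity that relates $b_2(G;\F)$, the second Betti number of a presentation complex, and the rank of the Hurewicz-induced map $h$ of (iii). Throughout I write $K = K_{\mathcal{P}}$ for the presentation complex of a finite presentation $\mathcal{P}$ (single $0$-cell, $n$ one-cells, $m$ two-cells), so that $C_1(K;\F) = \F^n$, $C_2(K;\F) = \F^m$, $\partial_1 = 0$, and, because $K$ is $2$-dimensional, $H_2(K;\F) = Z_2(K;\F) = \ker\partial_2$.

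First I would establish the geometric input. Attach $3$-cells to $K$ along maps $\phi_j\colon S^2 \to K$ whose classes generate $\pi_2(K)$ as a $\Z[G]$-module, and then attach cells of dimension $\geq 4$ to obtain a $K(G,1)$-complex $X$ with $X^{(2)} = K$. The cellular boundary of the $j$-th $3$-cell is precisely the Hurewicz image $h_{\Z}([\phi_j]) \in H_2(K;\Z) \subset C_2(K;\Z)$, and cells of dimension $\geq 4$ do not touch $C_2$. Since the Hurewicz homomorphism $\pi_2(K) \to H_2(K;\Z)$ is invariant under the $\pi_1(K)$-action on $\pi_2(K)$ (it factors through the coinvariants $\pi_2(K)_{\pi_1(K)}$), the image of $h$ is already spanned by the classes $h([\phi_j])$. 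Consequently
\[
  B_2(X;\F) = \mathrm{im}\big(\partial_3 \otimes \F \colon C_3(X;\F) \to C_2(X;\F)\big) = \mathrm{im}(h) \subseteq H_2(K;\F).
\]
As $H_2(X;\F) = H_2(G;\F)$ and $Z_2(X;\F) = H_2(K;\F)$, comparing dimensions yields the bookkeeping identity
\[
  b_2(G;\F) = b_2(K;\F) - \mathrm{rank}(h),
\]
which I note uses only the $2$- and $3$-dimensional part of $X$, so no finiteness of $X$ above dimension $3$ is needed to obtain it.

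Granting this identity, the equivalence (i) $\Leftrightarrow$ (iii) is immediate. By Proposition~\ref{Prop:Gap-G}, $\mathrm{gap}(G;\F) = 0$ exactly when some finite presentation $\mathcal{P}$ satisfies $b_2(K_{\mathcal{P}};\F) = b_2(G;\F)$; by the identity this holds for a given $\mathcal{P}$ precisely when $\mathrm{rank}(h) = 0$, i.e.\ when the map $h$ of (iii) vanishes. For (ii) $\Rightarrow$ (i) I would start from a $K(G,1)$-complex $X$ with finitely many cells in each dimension and $\partial_3 \otimes \F = 0$, collapse a maximal tree in its (finite) $1$-skeleton so that $X$ has a single $0$-cell, and observe that $X^{(2)}$ is then a finite presentation complex; since $B_2(X;\F) = 0$ we get $b_2(X^{(2)};\F) = \dim Z_2(X;\F) = b_2(X;\F) = b_2(G;\F)$, and Proposition~\ref{Prop:Gap-G} delivers $\mathrm{gap}(G;\F) = 0$.

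The remaining implication (iii) $\Rightarrow$ (ii) is where I expect the real work, and the main obstacle, to lie. Beginning from a presentation with $h = 0$, the construction above produces $X$ with $X^{(2)} = K$ and, because $h = 0$, with $\partial_3 \otimes \F = 0$, so the \emph{algebraic} conditions of (ii) are realized for free. What is not automatic is the demand that $X$ have finitely many cells in \emph{each} dimension: killing $\pi_2(K)$ by $3$-cells requires a $\Z[G]$-generating set of $\pi_2(K)$, and for a general finitely presentable $G$ this module (and the higher homotopy modules) need not be finitely generated, i.e.\ $G$ need not be of type $F_\infty$. The crux is therefore to arrange finiteness in the higher skeleta while preserving $\partial_3 \otimes \F = 0$; I would handle it by invoking the finiteness hypotheses on $G$ under which a finite-type $K(G,1)$ exists (so that only finitely many cells occur in each dimension), the vanishing of $\partial_3\otimes\F$ being unaffected by the choices made above dimension $3$.
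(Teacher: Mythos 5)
Your core identity $b_2(G;\F) = b_2(K_{\mathcal{P}};\F) - \mathrm{rank}(h)$ is correct, and it is precisely the quantitative form of the argument the paper uses: there, the authors prove (i) $\Rightarrow$ (ii) by taking a finite $2$-complex $K$ realizing the minimum in Proposition~\ref{Prop:Gap-G}, killing $\pi_2(K)$ with $3$-cells, and observing that the attachment ``will not reduce $b_2(K;\F)$'' because $b_2(G;\F)$ is already attained --- which is exactly your statement that minimality forces $\mathrm{rank}(h)=0$, hence $\partial_3\otimes\F=0$, read in the other direction. Your derivation of the identity is sound: the cellular boundary of a $3$-cell is the Hurewicz image of its attaching class, the image of $h$ is generated by the images of $\Z[G]$-module generators since $h$ factors through coinvariants, and no cell above dimension $3$ touches $Z_2$ or $B_2$; and since the derivation tolerates infinitely many $3$-cells, it gives an unconditional proof that $\mathrm{gap}(G;\F)=\min_{\mathcal{P}}\mathrm{rank}(h_{\mathcal{P}})$, hence of (i) $\Leftrightarrow$ (iii). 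Your (ii) $\Rightarrow$ (i) also agrees with the paper's (take the $2$-skeleton after collapsing a maximal tree). In these respects your route is cleaner than the paper's, which proves (i) $\Leftrightarrow$ (ii) directly and then disposes of (iii) in a single sentence.

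The difficulty you flag in (iii) $\Rightarrow$ (ii) is real, but your proposed repair does not close it. Condition (ii) is precisely the existence of a finite-type $K(G,1)$ (with an extra property on $\partial_3$), so ``invoking the finiteness hypotheses on $G$ under which a finite-type $K(G,1)$ exists'' assumes the conclusion: a finitely presentable group need not be of type $\mathrm{FP}_3$, let alone $\mathrm{F}_\infty$ (the Bieri--Stallings groups are finitely presented but not $\mathrm{FP}_3$), and nothing in (i) or (iii) supplies such a hypothesis. You should note, however, that the paper's own proof contains the identical lacuna: the assertion ``by attaching finitely many $3$-cells, we can kill $\pi_2(K)$'' holds if and only if $\pi_2(K)\cong H_2(\widetilde K)=\ker(\partial_2\colon C_2(\widetilde K)\to C_1(\widetilde K))$ is finitely generated over $\Z[G]$, i.e.\ iff $G$ is of type $\mathrm{FP}_3$, and the finiteness of the higher skeleta needs type $\mathrm{F}_\infty$; the paper passes over this silently. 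If one does grant $G$ of type $\mathrm{F}_\infty$, then by Schanuel's lemma $\pi_2$ of \emph{any} finite $2$-complex with fundamental group $G$ is finitely generated over $\Z[G]$, and similarly in each higher degree, so your construction completes (iii) $\Rightarrow$ (ii) exactly as you sketch, with $h=0$ guaranteeing that the $3$-cells have trivial mod-$\F$ boundaries. So your attempt proves exactly as much as the paper's proof does, while making explicit the finiteness assumption that both arguments in fact need.
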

   \begin{proof}
  First, let us assume $\mathrm{gap}(G;\F)=0$.
  Then by Proposition~\ref{Prop:Gap-G},
    there exists a connected finite $2$-complex $K$ with
   $b_2(K;\F) = b_2(G;\F)$. Then by attaching finitely many $3$-cells, we can
   kill $\pi_2(K)$. Note that this process will not reduce $b_2(K;\F)$ because otherwise
   $b_2(G;\F)$ would be reduced as well, which is absurd. 
   Furthermore, by adding some higher dimensional cells (finitely many in each dimension),
   we will obtain a $K(G, 1)$ space that satisfies the requirements in (ii).\n
   Conversely, suppose $X$ is a $K(G, 1)$ space
    with only finitely many cells in each dimension and
  $\partial_3 : C_3(X;\F) \rightarrow C_2(X;\F)$ is trivial.  
  Then the $2$-skeleton $X^{(2)}$ of $X$ is a 
   connected finite $2$-complex with $\pi_1(X^{(2)}) \cong G$ and $b_2(X^{(2)};\F) = b_2(G;\F)$. 
  This implies that $\mathrm{gap}(G;\F)=0$ by Proposition~\ref{Prop:Gap-G}. So the equivalence of 
  (i) and (ii) is proved.
  In addition, $X^{(2)}$ is clearly homotopy equivalent to 
   the presentation complex of some finite presentation of $G$.
    Then the equivalence of (ii) and (iii) follows easily from the construction of $X$.
   \end{proof}
   
     \begin{rem}
      For a prime $p$, a group $G$ with $\mathrm{gap}(G;\F_p)=0$ is called 
      \emph{$p$-efficient} (see~\cite{Epstein60}).
       A finite presentation $\mathcal{P}$ of $G$ is called $p$-Cockcroft 
      (see~\cite{KilPride96}) if $\mathcal{P}$ satisfies the condition in 
      Proposition~\ref{prop:Interp-Gap} (iii).
     So the $\F$-gap can be thought of as a generalization of
     $p$-efficiency and the existence of $p$-Cockcroft presentation of a group $G$.
    \end{rem}

     The $\Q$-gap of a group is related to
      two other known concepts.
      A finitely presentable group $G$ is called \emph{efficient} if 
      there exists a connected finite $2$-complex $K$ with $\pi_1(K)\cong G$
      and $b_2(K;\Q) = d(H_2(G))$ where $d( - )$ is the minimal number of 
      generators of a group. Indeed, we always have
      $b_2(K;\Q) \geq d(H_2(G))$ since $H_2(K)$ is free abelian and $H_2(G)$
      is a quotient of $H_2(K)$ (see~\eqref{Equ:Hopf-1}). In addition,
      a connected $2$-complex is said to have the \emph{Cockcroft property} 
      if the Hurewicz map $ h: \pi_2(K) \rightarrow H_2(K) $ is the zero map.
      The Cockcroft property was first 
      studied by Cockcroft~\cite{Cock51} in connection with the \emph{Whitehead conjecture} 
      (which states that a subcomplex of an aspherical 2-complex is itself aspherical).
      \n      
   \begin{prop} \label{Prop:Efficient}
    For any finitely presentable group $G$, the following statements are equivalent.
    \begin{itemize}
    \item[(i)] $\mathrm{gap}(G;\Q)=0$.\n
    \item[(ii)] $G$ is efficient and $H_2(G)$ has no torsion.\n
    \item[(iii)] $G$ has a finite presentation $\mathcal{P}$ whose presentation complex 
     $K_{\mathcal{P}}$ has the Cockcroft property.
    \end{itemize}
   \end{prop}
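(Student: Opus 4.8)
The plan is to establish (i) $\Leftrightarrow$ (ii) through the minimax description of the gap furnished by Proposition~\ref{Prop:Gap-G}, and (i) $\Leftrightarrow$ (iii) by specializing Proposition~\ref{prop:Interp-Gap} to the field $\F=\Q$ and then comparing the rational and integral Hurewicz maps. The whole argument reduces the statement to organizing a short chain of inequalities together with one genuinely topological observation about $2$-complexes.

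For (i) $\Leftrightarrow$ (ii), I would first record that $b_2(G;\Q)=\dim_{\Q}H_2(G;\Q)$ equals the free rank $r(H_2(G))$ of the Schur multiplicator, since $\Q$ is flat over $\Z$ and kills torsion. By Proposition~\ref{Prop:Gap-G},
$$ \mathrm{gap}(G;\Q) = \min\{ b_2(K;\Q) \,;\, K \text{ a connected finite $2$-complex with } \pi_1(K)\cong G\} - b_2(G;\Q). $$
The key is then the chain of inequalities
$$ \min_K b_2(K;\Q) \ \geq\ d(H_2(G)) \ \geq\ r(H_2(G)) \ =\ b_2(G;\Q). $$
The first inequality holds because the Hopf sequence~\eqref{Equ:Hopf-1} presents $H_2(G)$ as a quotient of the free abelian group $H_2(K)$, so $d(H_2(G))\leq \mathrm{rank}\,H_2(K)=b_2(K;\Q)$; the second holds because the minimal number of generators of a finitely generated abelian group is always at least its free rank, with equality exactly when the group is torsion-free. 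Consequently $\mathrm{gap}(G;\Q)=0$ if and only if the entire chain collapses to equalities. The equality $\min_K b_2(K;\Q)=d(H_2(G))$ is precisely the assertion that $G$ is efficient (the existence of a realizing $K$ matches the attained minimum, which exists as a minimum of nonnegative integers), while $d(H_2(G))=r(H_2(G))$ is exactly the condition that $H_2(G)$ has no torsion. This yields (i) $\Leftrightarrow$ (ii).

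For (i) $\Leftrightarrow$ (iii), I would apply Proposition~\ref{prop:Interp-Gap} with $\F=\Q$: $\mathrm{gap}(G;\Q)=0$ if and only if $G$ admits a finite presentation $\mathcal{P}$ for which the $\Q$-coefficient Hurewicz map $\pi_2(K_{\mathcal{P}})\to H_2(K_{\mathcal{P}};\Q)$ vanishes. It then remains to identify this rational Cockcroft condition with the integral Cockcroft property, that $h\colon \pi_2(K_{\mathcal{P}})\to H_2(K_{\mathcal{P}};\Z)$ is the zero map. This is the one step needing care, and I expect it to be the crux: since $K_{\mathcal{P}}$ is $2$-dimensional, $H_2(K_{\mathcal{P}};\Z)$ is the kernel of $\partial_2$ and hence free abelian. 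The rational Hurewicz map factors as $\pi_2(K_{\mathcal{P}})\xrightarrow{h} H_2(K_{\mathcal{P}};\Z)\to H_2(K_{\mathcal{P}};\Z)\otimes\Q$, so if it vanishes the image of $h$ lies in the torsion subgroup of a free abelian group, which is trivial; thus $h=0$. The reverse implication is immediate. Therefore (iii) is equivalent to the $\F=\Q$ case of Proposition~\ref{prop:Interp-Gap}(iii), and hence to (i), completing the cycle of equivalences.
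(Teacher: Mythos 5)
Your proposal is correct and follows essentially the same route as the paper, whose proof is the one-line assertion that the equivalences follow from Hopf's exact sequence~\eqref{Equ:Hopf-1} and Proposition~\ref{Prop:Gap-G}: your chain $\min_K b_2(K;\Q)\geq d(H_2(G))\geq r(H_2(G))=b_2(G;\Q)$ is exactly the expansion of that remark, and your routing of (i)$\Leftrightarrow$(iii) through Proposition~\ref{prop:Interp-Gap} with $\F=\Q$, upgraded to the integral Cockcroft property via the freeness of $H_2(K_{\mathcal{P}};\Z)$ for a $2$-complex, is the same mechanism (the Hopf sequence plus freeness of $H_2$ of a $2$-complex) in a slightly different package. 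In short, you have supplied precisely the details the paper leaves implicit, including the one point needing care (rational versus integral vanishing of the Hurewicz map), which you handle correctly.
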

    \begin{proof}
     These equivalences follow easily from the Hopf's 
     exact sequence~\eqref{Equ:Hopf-1} and Proposition~\ref{Prop:Gap-G}.
    \end{proof}
    
   By the above proposition,
    the set of non-efficient finitely presentable groups is included in 
     the set of groups with nonzero $\Q$-gaps. Moreover, this inclusion is strict.
    For example,   
    any finitely generated abelian group $ A$ is efficient. But our calculation in 
    Example~\ref{Exam:Gap-Abelian} shows that
    $\mathrm{gap}(A;\Q) > 0$ if $A$ is not free abelian or cyclic (where $H_2(A)$ 
    has torsion).\n

   The notion of $\Q$-gap is also related to the Eilenberg-Ganea conjecture.
    The \emph{Eilenberg-Ganea conjecture} claims that a group 
 of cohomological dimension $2$ is of geometric dimension $2$. 
  Let $G$ be a finitely presentable group with
  $\mathrm{cd}(G)=2$. 
  
 \begin{itemize}
   \item  If $\mathrm{gap}(G;\Q) =0$, then $G$ is efficient by Proposition~\ref{Prop:Efficient}.  
   It is shown in~\cite{Hillman89} that when $\mathrm{cd}(G)=2$, $G$ is efficient if and only if 
   there exists a finite $2$-dimensional $K(G,1)$-complex. So we obtain $\mathrm{gd}(G)=2$.\n
   
   \item If $\mathrm{gap}(G;\Q) > 0$, $\mathrm{gd}(G)\geq 3$ 
   by Corollary~\ref{Cor:gd(G)}. Then $G$ is a counterexample of 
   the Eilenberg-Ganea conjecture.   
 \end{itemize}  
   
  So for a finitely presentable group $G$, the Eilenberg-Ganea conjecture is equivalent to say
  that $\mathrm{cd}(G) = 2$ implies $\mathrm{gap}(G;\Q) =0$.
   \n
    
    \begin{exam}
     If a finite $2$-complex $K$ is 
    an $\F$-homology rose or 
     $\F$-acyclic space, then $\mathrm{gap}(K;\F)=0$.
    In particular, for any knot $N$ in $S^3$, 
       $\mathrm{gap}(\pi_1(S^3\backslash N);\F)=0$ for any field $\F$ since
        $S^3\backslash N$ is homotopy equivalent to a finite $2$-complex. 
   \end{exam}

   \begin{exam} \label{Exam:3-manifolds}
    Let $M^3$ be a closed connected 
     $3$-manifold. If $M^3$ is orientable,
     there exists a finite cell decomposition
     of $M^3$ with only a single $3$-cell. Let $K$ be the $2$-skeleton of this cell decomposition.
    Since $M^3$ is orientable, the boundary of the $3$-cell is $0$. This implies that
      $H_2(K;\F)=H_2(M^3;\F)$ for any field $\F$. \n
     \begin{itemize}
    \item If $M^3$ is aspherical, $b_2(\pi_1(M^3);\F) =b_2(M^3;\F)$.
     Then since
          $$\qquad \quad \ \ \ \mathrm{gap}(\pi_1(M^3);\F) \leq b_2(K;\F) - b_2(\pi_1(M^3);\F) =
          b_2(K;\F) - b_2(M^3;\F) = 0,$$
        we obtain $\mathrm{gap}(\pi_1(M^3);\F)=0$.\n  
    
    \item If $M^3$ is a spherical manifold (i.e. the universal cover of $M^3$ is $S^3$),
            we also have $\mathrm{gap}(\pi_1(M^3);\F)=0$. 
            This is because $\pi_2(M^3)\cong \pi_2(S^3)=0$, hence $M^3$ can serve as
            the $3$-skeleton of a $K(\pi_1(M^3),1)$ space.
            \n
             
     \end{itemize}   
     If $M^3$ is non-orientable and aspherical, we can show that 
     $\mathrm{gap}(\pi_1(M^3);\F_2)=0$ by the same argument. 
     For a general field $\F$, we can only show
      $\mathrm{gap}(\pi_1(M^3);\F) \leq 1$.  \n
   \end{exam}

   Next, we discuss the relation between the $\mathbb Q$-gap and an arbitrary $\mathbb F$-gap 
   of a group.
   For a finitely presentable group $G$, let
    $$H_{i}(G;\mathbb Z) \cong \mathbb Z^{b_{i}} \oplus T_{i},\ \, i=1,2,$$
     where  $b_i = b_{i}(G;\mathbb Q)$ and 
   $T_{i}$ stands for the torsion subgroup of $G$. So we have 
 \begin{equation}  \label{Equ:H_1-Relation}
     H_{1}(G; \mathbb F) \cong H_{1}(G) \otimes \mathbb F \cong \mathbb F^{b_{1}} 
     \oplus (T_{1} \otimes \mathbb F),
 \end{equation}    
   \begin{equation} \label{Equ:H_2-Relation}
      H_{2}(G;\mathbb F) \cong H_{2}(G) \otimes \mathbb F 
      \oplus \mathrm{Tor}_{\mathbb Z}(H_{1}(G); \mathbb F) \cong \mathbb F^{b_{2}} 
      \oplus (T_{2} \otimes \mathbb F) \oplus \mathrm{Tor}_{\mathbb Z}(T_{1};\mathbb F).
   \end{equation}
Note that any finite abelian group $T$ can be defined by an exact sequence
 $$ 0 \to \mathbb Z^{n} \to \mathbb Z^{n} \to T \to 0. $$ 
 Then we have an exact sequence
$0 \to \mathrm{Tor}_{\mathbb Z}(T, \mathbb F) \to \mathbb F^{n} 
 \to \mathbb F ^{n} \to T \otimes \mathbb F \to 0$.
This implies that $\dim_{\mathbb F}\mathrm{Tor}_{\mathbb Z}(T, \mathbb F) =\dim_{\mathbb F} T \otimes \mathbb F$. So we obtain
 \begin{equation}\label{Equ:Relation_gap}
    \mathrm{gap}(G;\mathbb F)=b_{1}(G;\mathbb F) - b_{2}(G;\mathbb F)- \mathrm{def}(G)
 =\mathrm{gap}(G;\mathbb Q) - \dim_{\mathbb F}(T_{2} \otimes \mathbb F).
 \end{equation}
  This relation implies the following.\n
  
  \begin{prop} \label{Prop:Relation-gap}
 For a finitely presentable group $G$, $\mathrm{gap}(G;\mathbb Q)=0$ implies that
   $\mathrm{gap}(G;\mathbb F)=0$ for any field $\mathbb F$;
   and $\mathrm{gap}(G;\mathbb Q) >0$ implies that $\mathrm{gap}(G;\mathbb F_{p}) > 0$ 
   for almost all prime $p$. 
  \end{prop}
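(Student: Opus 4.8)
The plan is to read off both claims directly from the key identity~\eqref{Equ:Relation_gap}, namely
\[
  \mathrm{gap}(G;\mathbb F) = \mathrm{gap}(G;\mathbb Q) - \dim_{\mathbb F}(T_2 \otimes \mathbb F),
\]
which is established in the text preceding the proposition. The only facts I need in addition are that $\mathrm{gap}(G;\mathbb F)\in\mathbb Z_{\geq 0}$ (from the definition together with Lemma~\ref{Lem:Deficiency}) and that $T_2$, the torsion subgroup of $H_2(G;\mathbb Z)$, is a \emph{finite} abelian group because $G$ is finitely presentable. First I would dispose of the first assertion: if $\mathrm{gap}(G;\mathbb Q)=0$, then since $\dim_{\mathbb F}(T_2\otimes\mathbb F)\geq 0$, the identity gives $\mathrm{gap}(G;\mathbb F)=-\dim_{\mathbb F}(T_2\otimes\mathbb F)\leq 0$; combined with $\mathrm{gap}(G;\mathbb F)\geq 0$ this forces $\mathrm{gap}(G;\mathbb F)=0$ for every field $\mathbb F$. (In fact this also shows $T_2\otimes\mathbb F=0$ for all $\mathbb F$, so $T_2=0$, consistent with Proposition~\ref{Prop:Efficient}.)

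For the second assertion, suppose $\mathrm{gap}(G;\mathbb Q)>0$. The identity shows that $\mathrm{gap}(G;\mathbb F_p)=0$ precisely when $\dim_{\mathbb F_p}(T_2\otimes\mathbb F_p)=\mathrm{gap}(G;\mathbb Q)$. The decisive point is that $T_2$ is a finite abelian group, so only finitely many primes divide its order; for every prime $p$ not dividing $|T_2|$ we have $T_2\otimes\mathbb F_p=0$, hence $\dim_{\mathbb F_p}(T_2\otimes\mathbb F_p)=0$. For all such $p$ the identity yields $\mathrm{gap}(G;\mathbb F_p)=\mathrm{gap}(G;\mathbb Q)>0$. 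Since the primes dividing $|T_2|$ are finite in number, this establishes $\mathrm{gap}(G;\mathbb F_p)>0$ for almost all $p$, which is exactly the claim.

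The argument is essentially a bookkeeping consequence of~\eqref{Equ:Relation_gap}, so there is no serious obstacle; the only thing to verify carefully is the finiteness of $T_2$. Here I would note that a finitely presentable group has a $K(G,1)$ with finite $2$-skeleton, so $H_2(G;\mathbb Z)$ is a finitely generated abelian group and its torsion subgroup $T_2$ is therefore finite. One small subtlety worth flagging is that the phrase \emph{almost all} must be read as \emph{all but finitely many}: the excluded primes are exactly those dividing $|T_2|$ (more precisely, those $p$ for which $T_2$ has $p$-torsion), and for such $p$ the gap may genuinely drop, so one should not expect $\mathrm{gap}(G;\mathbb F_p)>0$ for literally every $p$. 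This is the only place where care is needed, and it is handled by the finiteness of $T_2$.
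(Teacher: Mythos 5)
Your proposal is correct and follows exactly the paper's route: the paper states Proposition~\ref{Prop:Relation-gap} as an immediate consequence of the identity~\eqref{Equ:Relation_gap}, $\mathrm{gap}(G;\mathbb F)=\mathrm{gap}(G;\mathbb Q)-\dim_{\mathbb F}(T_2\otimes\mathbb F)$, combined with the nonnegativity of the gap and the fact that the finite group $T_2$ has $p$-torsion for only finitely many primes $p$. Your additional observations (that $\mathrm{gap}(G;\mathbb Q)=0$ forces $T_2=0$, and the explicit justification that $T_2$ is finite because $H_2(G;\mathbb Z)$ is finitely generated for finitely presentable $G$) are sound refinements of the same argument rather than a different approach.
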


     Finally, let us discuss the property of $\F$-gap under
  the free product and direct product of 
  groups. Let $G_1$ and $G_2$ be two finitely presentable groups.\n
  
  \begin{itemize}
    \item If $\mathrm{gap}(G_1;\F) = \mathrm{gap}(G_2;\F)=0$, we must have 
    $\mathrm{gap}(G_1*G_2;\F) = 0$ and
    $\mathrm{def}(G_1*G_2) = \mathrm{def}(G_1)+\mathrm{def}(G_2)$. The proof is 
    parallel to the proof of~\cite[Lemma 1.6]{Epstein60}.\n

    \item That
  $\mathrm{gap}(G_1;\F)= \mathrm{gap}(G_2;\F)=0$ does not
  imply $ \mathrm{gap}(G_1\times G_2;\F)=0$. For example, for $d_1,d_2>1$ with 
   $p\nmid d_1$ and $p\nmid d_2$, $\mathrm{gap}(\Z_{d_1};\F_p)=0$ and 
   $\mathrm{gap}(\Z_{d_1d_2};\F_p)=0$. But by the calculation in 
   Example~\ref{Exam:Gap-Abelian}, we have
   $$ \mathrm{gap}((\Z_{d_1})^2;\F_p)=1,
    \ \, \mathrm{gap}(\Z_{d_1} \times \Z_{d_1d_2};\F_p)=1.$$
     Conversely, we may ask the following question.\nn
  \end{itemize}
  
 \textbf{Question:}
    If $\mathrm{gap}(G_1\times G_2;\F)=0$, 
  is it necessary that both $\mathrm{gap}(G_1;\F)=0$ and $\mathrm{gap}(G_2;\F)=0$.
  \n
  The answer to this question is yes when $G_1$ and $G_2$ are both finitely generated abelian groups (see Example~\ref{Exam:Gap-Abelian}). But the general answer is not clear to us.\\

   \section{Digression to the $\mathrm{D(2)}$-problem}
    By Proposition~\ref{Prop:Fund-Group} and Proposition~\ref{Prop:mod-p-Acyclic},
   any finitely presentable group $\Gamma$ with $b_2(\Gamma;\F)=0$ can be realized as
    the fundamental group of a finite $3$-complex $B$ which is an 
    $\F$-homology rose or an $\F$-acyclic space.
     But generally speaking, we can not
     require $B$ to be $2$-dimensional.
      Indeed, Proposition~\ref{Prop:Gap-G} implies the following.\nn
      
   \begin{prop}\label{Prop:2-dim-gap}
    For any field $\F$, 
    a finitely presentable 
    group $\Gamma$ can be realized as the fundamental group of a connected finite $2$-complex $K$
    with $b_2(K;\F)=0$ if and only if $b_2(\Gamma;\F)=0$ and
    $\mathrm{gap}(\Gamma;\F) = 0$. \n
   \end{prop}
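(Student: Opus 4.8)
The plan is to deduce the statement almost immediately from Proposition~\ref{Prop:Gap-G}, which already expresses the $\F$-gap as a minimal second Betti number:
\[ \mathrm{gap}(\Gamma;\F) = \min\{\, b_2(K;\F) \, ; \, K \text{ a connected finite $2$-complex with } \pi_1(K)\cong \Gamma \,\} - b_2(\Gamma;\F). \]
Writing $M := \min\{\, b_2(K;\F)\, ;\, \pi_1(K)\cong\Gamma \,\}$ for the displayed minimum, the content of Proposition~\ref{Prop:Gap-G} is the single identity $M = \mathrm{gap}(\Gamma;\F) + b_2(\Gamma;\F)$. So the whole argument reduces to analysing when $M = 0$.

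First I would observe that the existence of a finite $2$-complex $K$ with $\pi_1(K)\cong\Gamma$ and $b_2(K;\F)=0$ is equivalent to $M=0$. Indeed, every $b_2(K;\F)$ is a nonnegative integer, so $M\geq 0$; if such a $K$ exists then $0\leq M\leq b_2(K;\F)=0$, whence $M=0$, and conversely any complex attaining the minimum $M=0$ is precisely the desired $K$. Here I use the fact, implicit in the proof of Proposition~\ref{Prop:Gap-G}, that the minimum is genuinely \emph{attained} by a finite $2$-complex (a presentation complex $K_{\mathcal P}$ realising $\mathrm{def}(\Gamma)$), not merely approached.

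Next I would combine this with the identity $M = \mathrm{gap}(\Gamma;\F) + b_2(\Gamma;\F)$ together with the two nonnegativity facts $\mathrm{gap}(\Gamma;\F)\geq 0$, which is built into the definition of the $\F$-gap, and $b_2(\Gamma;\F)\geq 0$. Since a sum of two nonnegative integers vanishes if and only if both vanish, we get $M=0$ if and only if $b_2(\Gamma;\F)=0$ \emph{and} $\mathrm{gap}(\Gamma;\F)=0$. Chaining this with the equivalence of the previous paragraph yields exactly the claimed biconditional.

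I do not expect a genuine obstacle: once Proposition~\ref{Prop:Gap-G} is in hand, the statement is a formal consequence of nonnegativity. The only point requiring a little care is the first step --- verifying that ``$b_2(K;\F)=0$ is achievable'' and ``$M=0$'' really say the same thing --- which hinges on the minimum in Proposition~\ref{Prop:Gap-G} being realised by a bona fide finite $2$-complex rather than only infimised; this is guaranteed because the presentation complexes of finite presentations of $\Gamma$ form the competing family, the relevant quantity takes values in $\Z_{\geq 0}$, and maximal deficiency is attained by some finite presentation.
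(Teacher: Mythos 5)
Your proof is correct and takes essentially the same route as the paper, which states Proposition~\ref{Prop:2-dim-gap} as an immediate consequence of Proposition~\ref{Prop:Gap-G} exactly as you do: writing the minimum as $\mathrm{gap}(\Gamma;\F)+b_2(\Gamma;\F)$ and using nonnegativity of both summands. Your explicit check that the minimum is attained (the family of presentation complexes of finite presentations of $\Gamma$ is nonempty and the values lie in $\Z_{\geq 0}$) is the only detail the paper leaves implicit, and it is handled correctly.
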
   

    So if $B$ is a $3$-complex with
       $b_2(B;\F)=0$ and $\mathrm{gap}(\pi_1(B);\F) > 0$, then
       $B$ can not be homotopy equivalent to a $2$-complex. More generally,
       we have:
     \n
     
     \begin{lem} \label{Lem:Principal}
       If $B$ is a connected finite $3$-complex with
       $b_2(B;\F)=b_2(\pi_1(B);\F)$ and $\mathrm{gap}(\pi_1(B);\F) > 0$, then
       $B$ is not homotopy equivalent to any finite $2$-complex.
     \end{lem}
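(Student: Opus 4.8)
The plan is to argue by contradiction, the whole weight of the argument resting on the minimax characterization of the $\F$-gap recorded in Proposition~\ref{Prop:Gap-G}. Write $\Gamma := \pi_1(B)$; since $B$ is a finite complex, $\Gamma$ is finitely presentable, so $\mathrm{gap}(\Gamma;\F)$ is defined and, by hypothesis, positive. Suppose, toward a contradiction, that $B$ were homotopy equivalent to some connected finite $2$-complex $K$. The first step is simply to extract what such a homotopy equivalence gives: because $\pi_1$ and $\F$-homology are homotopy invariants, we would have $\pi_1(K)\cong\Gamma$ and $b_2(K;\F)=b_2(B;\F)$. Combining this with the standing hypothesis $b_2(B;\F)=b_2(\pi_1(B);\F)=b_2(\Gamma;\F)$ yields the key identity $b_2(K;\F)=b_2(\Gamma;\F)$.

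The second step is to confront this identity with Proposition~\ref{Prop:Gap-G}. The second equality there identifies $\mathrm{gap}(\Gamma;\F)+b_2(\Gamma;\F)$ with the minimum of $b_2(K';\F)$ taken over all connected finite $2$-complexes $K'$ with $\pi_1(K')\cong\Gamma$. Since $K$ is exactly one such complex, it furnishes the lower bound $b_2(K;\F)\geq b_2(\Gamma;\F)+\mathrm{gap}(\Gamma;\F)$. Substituting $b_2(K;\F)=b_2(\Gamma;\F)$ forces $\mathrm{gap}(\Gamma;\F)\leq 0$, which contradicts $\mathrm{gap}(\Gamma;\F)>0$. Hence no finite $2$-complex in the homotopy type of $B$ can exist, which is the assertion of the lemma.

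As for obstacles, there is essentially no technical difficulty once Proposition~\ref{Prop:Gap-G} is in hand: the entire content is that the $\F$-gap is a lower bound for the second $\F$-Betti number realizable by a finite $2$-complex with the prescribed fundamental group, and that this bound is \emph{strict} precisely when the gap is positive. The only two points requiring care are both routine but must be invoked explicitly. First, the reduction ``homotopy equivalent $\Rightarrow$ identical $\pi_1$ and identical $b_2(\,\cdot\,;\F)$'', which is standard homotopy invariance. Second, the role of the hypothesis $b_2(B;\F)=b_2(\pi_1(B);\F)$: this is exactly the bridge that converts the homotopy-invariant number $b_2(B;\F)$ into the group invariant $b_2(\Gamma;\F)$ appearing in Proposition~\ref{Prop:Gap-G}, and without it the comparison would not close. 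Note finally that this lemma generalizes Proposition~\ref{Prop:2-dim-gap}, which treats the special case $b_2(\Gamma;\F)=0$; the present statement is what is needed to feed positive-gap groups into the search for potential counterexamples to the $\mathrm{D(2)}$-problem.
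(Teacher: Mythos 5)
Your proof is correct and follows exactly the route the paper intends: the lemma is stated right after Proposition~\ref{Prop:2-dim-gap} as its immediate generalization, with the minimax characterization of $\mathrm{gap}(\pi_1(B);\F)$ from Proposition~\ref{Prop:Gap-G} doing all the work, just as in your argument. Your contradiction via $b_2(K;\F)=b_2(\Gamma;\F)<b_2(\Gamma;\F)+\mathrm{gap}(\Gamma;\F)$ is precisely the implicit proof, and your remarks on the role of the hypothesis $b_2(B;\F)=b_2(\pi_1(B);\F)$ match the paper's use of the lemma.
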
  
       
       This simple fact motivates us to study
      Wall's $\mathrm{D(2)}$-problem (see~\cite{Wall65}) among
      $\F$-homology roses and $\F$-acyclic spaces.
             \nn
             
         \noindent \textbf{$\mathbf{D(2)}$-problem:} 
   Let $B$ be a finite $3$-complex with $\mathrm{cd}(B)=2$, i.e.
    $H^3(B; \mathcal{M}) = 0$ for all $\pi_1(B)$-module $\mathcal{M}$; is it true that $B$ is 
 homotopy equivalent to a finite $2$-complex?
      \n
      We say that the
      \emph{$\mathrm{D(2)}$-property} holds for a group $\Gamma$ if any connected 
      finite $3$-complex $B$ with $\mathrm{cd}(B)=2$ and $\pi_1(B)\cong \Gamma$
       is homotopy equivalent to a finite $2$-complex.
       The $\mathrm{D(2)}$-problem is equivalent to asking whether the 
       $\mathrm{D(2)}$-property holds for all finitely 
       presentable groups. So far the $\mathrm{D(2)}$-property is known to hold for\n
       \begin{itemize}
        \item free groups (Johnson~\cite{John03}),\n
        \item finite abelian groups (Browning, Latiolais~\cite{Browning78, Lat86}),\n
        \item $\Z\times \Z_n$ and $\Z^2$ (Edwards~\cite{Edward06}), \n
        \item dihedral groups of order $8$ 
        (Mannan~\cite{Man07}) and of order $4n+2$ (Johnson~\cite{John02}).
      \end{itemize} 
        \n
         On the other hand, no counterexamples of the $\mathrm{D(2)}$-problem are known so far,
       although people
      have tried many methods and proposed some candidates for such counterexamples
         (see~\cite{BriTwe07, Edward06, Harland98, John03, John04, Man09, RudWal08}). 
         Our discussion on $\F$-gaps 
         motivates us to search candidates for the counterexamples of 
     the $\mathrm{D(2)}$-problem among $\F$-homology roses and $\F$-acyclic spaces via the following strategy.\n   
     
      \textbf{Strategy:}\n
      \begin{itemize}
      \item Take a finitely presentable group $\Gamma$ with
       $H_2(\Gamma;\F)=0$ and $\mathrm{gap}(\Gamma;\F) > 0$.\n
      
      \item 
      By Proposition~\ref{Prop:Fund-Group} and Proposition~\ref{Prop:mod-p-Acyclic}, 
      there exists a connected finite $3$-complex $B$ with 
      $\pi_1(B) \cong \Gamma$ and $b_2(B;\F)=0$.
      If we can choose $B$ with $\mathrm{cd}(B) = 2$, then
      $B$ is a counterexample of the $\mathrm{D(2)}$-problem.
    \end{itemize}\n

   To actually put the above strategy into practice,
    we need
    to first construct enough examples of finite $3$-complexes
    with cohomological dimension two.
    A general way to do so is to use 
    Quillen's plus construction (see~\cite{Man09}).
         \n
     
      Let $K$ be a connected finite $2$-complex.
      Assume that the fundamental group $\pi_1(K)$ of $K$ has a 
    finitely closed perfect normal subgroup $N$. Here $N$ is \emph{finitely closed} means
    that it is the normal closure of a finitely generated subgroup of $\pi_1(K)$. And 
     $N$ is \emph{perfect} means $H_1(N;\Z)=0$ (i.e. $N =[N,N]$).   
    Then \emph{Quillen's plus construction} on $K$ gives us
     a finite $3$-complex $K^+$ with
         \begin{equation}
            H_*(K^+;\Z)\cong H_*(K;\Z), \ \pi_1(K^+) \cong \pi_1(K)\slash N,\ 
            \mathrm{cd}(K^+)=\mathrm{cd}(K)=2.
         \end{equation}        
       \n

   It is shown in~\cite[Theorem 3.4]{Man09} that any connected finite $3$-complex with 
    cohomological dimension $2$ can be obtained from a finite $2$-complex using
    the plus construction. So in particular, if a finite $3$-complex $B$ with
     $b_2(B;\F)=0$ has cohomological dimension $2$, then 
     there exists a finite $2$-complex $K$ so that $B$ is homeomorphic to 
     $K^+$ with respect to some finitely closed 
     perfect normal subgroup of $\pi_1(K)$. This implies that
     $b_2(K;\F)=0$, and so $b_2(\pi_1(K);\F)=0$ and
     $\mathrm{gap}(\pi_1(K);\F) = 0$ (by Proposition~\ref{Prop:2-dim-gap}).\nn
     
     So if we assume that a finite $3$-complex 
     $B$ is a counterexample of the $\mathrm{D(2)}$-problem and $B$
      is an $\F$-homology rose or an $\F$-acyclic space, we necessarily have
     an exact sequence of the following form. 
      \begin{equation}\label{Equ:D(2)-CoutExam}
        1 \longrightarrow N \longrightarrow  G \longrightarrow  \Gamma \longrightarrow 1 
        \ \ \text{where}
     \end{equation}
    \begin{itemize}
      \item $N$ is a nontrivial finitely closed perfect normal subgroup of $G$. \n
      \item $G$ is a finitely presentable group with $b_2(G;\F)=0$, $\mathrm{gap}(G;\F) = 0$.\n
      \item $b_2(\Gamma;\F)=0$, $\mathrm{gap}(\Gamma;\F) > 0$, $H_1(\Gamma;\Z)=H_1(G;\Z)$.
    \end{itemize}  
      These conditions further imply that 
    \begin{itemize}
     \item $\mathrm{def}(G) = b_1(G;\F) = b_1(\Gamma;\F)   \geq 0$ and 
    $\mathrm{def}(G) > \mathrm{def}(\Gamma)$. \n
    \end{itemize}

    Conversely, Proposition~\ref{Prop:Fund-Group} implies that
     any sequence as in~\eqref{Equ:D(2)-CoutExam} gives us 
     a counterexample $B$ of the $\mathrm{D(2)}$-problem where 
     $B$ is an $\F$-homology rose or an $\F$-acyclic space with $\pi_1(B)\cong \Gamma$.\n   
          
  Note that in~\eqref{Equ:D(2)-CoutExam},
   $b_{2}(G;\mathbb F)=0$ implies $b_{2}(G;\mathbb Q)=0$ and 
  $\mathrm{tors}(H_{2}(G)) \otimes \mathbb F=0$ (see~\eqref{Equ:H_2-Relation}).
  Then since $\mathrm{gap}(G;\mathbb F)=0$, we get $\mathrm{gap}(G;\mathbb Q)=0$ (see~\eqref{Equ:Relation_gap}). 
  Similarly, $b_2(\Gamma;\F)=0$ implies
   $b_{2}(\Gamma; \mathbb Q)=0$ and $\mathrm{gap}(\Gamma;\mathbb Q)
   =\mathrm{gap}(\Gamma;\mathbb F) > 0$. 
  So if we want to search counterexamples of the $\mathrm{D(2)}$-problem via
   the sequence~\eqref{Equ:D(2)-CoutExam},
     we only need to consider $\mathbb Q$-coefficients.\n

      We can think of $\Gamma$ in the exact sequence~\eqref{Equ:D(2)-CoutExam} 
       as the deck transformation group of a regular 
        covering space $p: X \rightarrow K$ where $X$ and $K$ are connected
        finite $2$-complexes with $\pi_1(X)\cong N$ and $\pi_1(K)\cong G$. 
        From this viewpoint,
        we can try the following ways
         to construct possible counterexamples of the $\mathrm{D(2)}$-problem.
        \nn  
        
     \begin{itemize}
       \item[(M1)] Let $X$ be a connected $2$-complex
          with $H_1(X;\Z)=0$. Suppose $\Gamma$ is a group with $\mathrm{gap}(\Gamma;\Q) > 0$.
          If $\Gamma$ can act freely on $X$ so that  
          the orbit space $K=X\slash \Gamma$
          is a connected finite $2$-complex with $b_2(K;\Q) =0$, then
          the plus-construction $K^+$ with
          respect to $\pi_1(X)$ is a counterexample of the $\mathrm{D(2)}$-problem. \n
          
          \item[(M2)] Let $K$ be a connected 
          finite $2$-complex with $b_2(K;\Q) =0$ and let
           $\Gamma$ be a
          group with $\mathrm{gap}(\Gamma;\Q) > 0$. If we can construct a regular 
          $\Gamma$-covering
          space $X$ over $K$ with $H_1(X;\Z)=0$, the plus construction $K^+$ 
          with respect to $\pi_1(X)$ is 
          a counterexample of the $\mathrm{D(2)}$-problem.\n
        \end{itemize}                   
                     
       Moreover, the above way of searching candidates for the counterexamples of the
        $\mathrm{D(2)}$-problem can be generalized as follows.\n
        
         \begin{itemize}
          \item[(M3)] Let $G$ be a finitely presented group.
            If there exists a finitely closed perfect normal subgroup $N$ 
            of $G$ so that the quotient group
            $\Gamma=G\slash N$ satisfies $\mathrm{gap}(\Gamma;\Q) > 0$ and 
            $b_2(\Gamma;\Q)= b_2(K_{\mathcal{P}};\Q)$, where $\mathcal{P}$
            is a finite presentation of $G$,
           then the plus construction $K^+_{\mathcal{P}}$ with respect to $N$ is a
          finite
           $3$-complex with $b_2(K^+_{\mathcal{P}};\Q)=b_2(\Gamma;\Q)$. So $K^+_{\mathcal{P}}$
          is a counterexample of the $\mathrm{D(2)}$-problem. \n
          \end{itemize}
        
        \begin{rem}
        The above way of searching counterexamples of the $\mathrm{D(2)}$-problem
        is essentially equivalent to the one proposed in~\cite[Theorem 4.5]{Man09}.\n
        \end{rem} 
              
  The following discussion tells us that
         if we want to search counterexamples of the $\mathrm{D(2)}$-problem among
        $\F$-homology roses, we need to assume, a priori, that the fundamental groups
        of the $\F$-homology 
        roses are infinite, non-abelian and have trivial multiplicators.\n
        
          \begin{prop} \label{prop:Homo-Except-Cases}
       Let $B$ be a finite $3$-complex.
       \begin{itemize}
        \item[(i)] If $B$ 
       is an $\mathbb F$-homology rose and $\pi_1(B)$ is finite, $\mathrm{cd}(B)=3$. \n
       
       \item[(ii)] If $B$ 
       is an $\mathbb F$-homology rose or an $\F$-acyclic space with $H_2(\pi_1(B))\neq 0$, then
               $\mathrm{cd}(B)=3$.
       \end{itemize}              
          \end{prop}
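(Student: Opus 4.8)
The plan is to exploit that $B$, being a finite $3$-complex, automatically satisfies $\mathrm{cd}(B)\le 3$, so in both cases it suffices to rule out $\mathrm{cd}(B)\le 2$, and I will argue by contradiction, assuming $H^3(B;\mathcal M)=0$ for every $\pi_1(B)$-module $\mathcal M$. The two parts will be handled by different mechanisms: a transfer/Euler-characteristic argument on the universal cover for (i), and the plus-construction characterization of $\mathrm{cd}=2$ complexes for (ii).

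For part (i), put $G=\pi_1(B)$ and let $\widetilde{B}$ be the universal cover, a compact (finite) $3$-complex carrying a free $G$-action with $\widetilde{B}\slash G=B$. The key point is the identification $H^n(B;\Z G)\cong H^n(\widetilde{B};\Z)$ valid when $G$ is finite: since $C_*(\widetilde{B})$ is a complex of free $\Z G$-modules and $\mathrm{Hom}_\Z(\Z G,\Z)\cong \Z G$ for finite $G$, the cochain complexes $\mathrm{Hom}_{\Z G}(C_*(\widetilde{B}),\Z G)$ and $\mathrm{Hom}_\Z(C_*(\widetilde{B}),\Z)$ agree. Thus $\mathrm{cd}(B)\le 2$ (applied to $\mathcal M=\Z G$) forces $H^3(\widetilde{B};\Z)=0$, whence $H_3(\widetilde{B};\Z)=0$ (it is free abelian, being a subgroup of $C_3(\widetilde{B})$, and is detected by $\mathrm{Hom}(-,\Z)$), so $b_3(\widetilde{B};\Q)=0$. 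Since $\widetilde{B}$ is simply connected I get $\chi(\widetilde{B})=1+b_2(\widetilde{B};\Q)\ge 1$. On the other hand $\chi$ is multiplicative in finite covers and an $\F$-homology rose with $m$ petals has $\chi(B)=1-m$, so $\chi(\widetilde{B})=|G|\,(1-m)\le 0$ because $m\ge 1$. This contradiction yields $\mathrm{cd}(B)=3$.

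For part (ii) the main obstacle is that I cannot detect $\mathrm{cd}(B)=3$ through ordinary integral cohomology: $H_2(B;\Z)$ may be torsion-free even when its quotient $H_2(\pi_1(B);\Z)$ (via the Hopf sequence~\eqref{Equ:Hopf-1}) carries torsion, and escaping this would otherwise force a delicate analysis of $H^3(B;\Z G)$ for infinite $G$. I avoid it using the plus-construction characterization from~\cite{Man09} quoted above: if $\mathrm{cd}(B)=2$ then $B$ is homotopy equivalent to $K^+$ for some connected finite $2$-complex $K$, and since the plus construction preserves integral homology, $H_2(B;\Z)\cong H_2(K;\Z)$, which is \emph{free abelian} because $K$ is a finite $2$-complex. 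First I would observe that in either case $b_2(B;\F)=0$ forces $H_2(B;\Z)$ to be finite: its free rank equals $b_2(B;\Q)$, and $b_2(B;\F)=0$ gives $b_2(B;\Q)=0$ (directly in characteristic $0$, and via $b_2(B;\F_p)\ge b_2(B;\Q)$ from the universal coefficient theorem in characteristic $p$).

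Combining these, $H_2(B;\Z)$ is simultaneously finite and free abelian, hence zero; the Hopf surjection $H_2(B;\Z)\twoheadrightarrow H_2(\pi_1(B);\Z)$ then forces $H_2(\pi_1(B);\Z)=0$, contradicting the hypothesis $H_2(\pi_1(B))\ne 0$. Therefore $\mathrm{cd}(B)=3$, and this argument applies verbatim whether $B$ is an $\F$-homology rose or an $\F$-acyclic space. The crux of the whole proof is the availability of the plus-construction theorem, which converts the subtle statement ``$\mathrm{cd}(B)=2$'' into the concrete homological constraint that $H_2(B;\Z)$ be free abelian; the elementary finiteness of $H_2(B;\Z)$ then collides with it immediately.
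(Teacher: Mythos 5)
Your proof is correct, and the two parts compare differently with the paper. Part (i) is essentially the paper's own argument: the identification $H^3(B;\Z[\pi_1(B)])\cong H^3(\widetilde{B};\Z)$ for finite fundamental group, the multiplicativity $\chi(\widetilde{B})=|\pi_1(B)|\cdot\chi(B)\leq 0$, and $\chi(\widetilde{B})=1+b_2(\widetilde{B};\Q)-b_3(\widetilde{B};\Q)$ for the simply connected cover; you merely run it as a contradiction (assuming $H^3(\widetilde{B})=0$, hence $H_3(\widetilde{B})=0$ since it is finitely generated free abelian) where the paper concludes $H_3(\widetilde{B})\neq 0$ directly. In part (ii) you take a genuinely different route. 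The paper's proof is purely homological: from $H_2(B;\F)=0$ it gets $H_2(B)$ finite, from the Hopf surjection $H_2(B)\twoheadrightarrow H_2(\pi_1(B))\neq 0$ it gets $H_2(B)$ a \emph{nontrivial finite} abelian group, and then $\mathrm{cd}(B)=3$ follows at once from the universal coefficient theorem with trivial coefficients, since $H^3(B;\Z)\supseteq \mathrm{Ext}(H_2(B),\Z)\neq 0$. You instead invoke Mannan's plus-construction theorem~\cite{Man09} to convert the contradiction hypothesis into the statement that $H_2(B;\Z)$ is free abelian, and collide that with finiteness. Both are sound (your reading of the hypothesis of Mannan's theorem as $H^3(B;\mathcal{M})=0$ for all modules matches the paper's loose usage of $\mathrm{cd}(B)=2$), but your stated motivation for the detour --- that ordinary integral cohomology cannot detect $\mathrm{cd}(B)=3$ because $H_2(B;\Z)$ might be torsion-free --- is refuted by your own intermediate facts: a finite group surjecting onto a nonzero group is nonzero and all torsion, so $\mathrm{Ext}(H_2(B),\Z)\neq 0$ and the two-line UCT computation closes the proof with no external input. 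What your route buys is an explicit view of the mechanism ($H_2$ of any finite $3$-complex of cohomological dimension $2$ must be free abelian, consistent with the paper's section-5 discussion); what it costs is reliance on a comparatively heavy theorem where an elementary argument suffices.
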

          \begin{proof}
     (i) Consider the universal cover $\widetilde B \to B$.
      Since $\chi(B)=1-b_{1}(B;\mathbb F) \leq 0$, 
      $\chi(\widetilde B)=|\pi_1(B)| \cdot \chi(B) \leq 0$. But since $\widetilde{B}$ is 
      simply connected,
      $$ \chi(\widetilde B)=1+b_{2}(\widetilde B;\Q)-b_{3}(\widetilde B;\Q).$$
      So $H_{3}(\widetilde B) \neq 0$. Then since
       $H^{3}(B; \mathbb Z[\pi_{1}(B)]) = H^{3}(\widetilde B) \neq 0$, 
       $\mathrm{cd}(B)=3$.\n
       
      (ii)  
       Since $H_2(B;\F)=0$, we have $H_2(B;\Q)=0$. So $H_2(B)$ is finite abelian.
         Then since $H_2(\pi_1(B)) \neq 0$, 
      the sequence~\eqref{Equ:Hopf-1} implies that $H_2(B)$ is non-trivial.
       Hence $H_2(B)$ is a non-trivial finite abelian group. This implies    
           $\mathrm{cd}(B)=3$. 
   \end{proof}
   
  \begin{cor}
   Suppose a finite $3$-complex $B$ with $\mathrm{cd}(B)=2$
   is an $\F$-homology rose or an $\F$-acyclic space.
    Then the following statements are equivalent.
      \begin{itemize}
         \item[(i)] $\pi_1(B)$ is abelian.\n
         \item[(ii)] $\pi_1(B)$ is a cyclic group.\n
         \item[(iii)] $B$ is homotopy equivalent to $S^1$ or $\mathbb{P}_m$, $m\geq 2$.
       \end{itemize}
    \end{cor}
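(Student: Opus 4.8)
The plan is to prove the cycle of implications (iii)$\Rightarrow$(i)$\Rightarrow$(ii)$\Rightarrow$(iii), which yields the full equivalence. The implication (iii)$\Rightarrow$(i) is immediate, since $\pi_1(S^1)\cong\Z$ and $\pi_1(\mathbb{P}_m)\cong\Z_m$ are abelian (indeed cyclic). The substance of the corollary is concentrated in the other two implications, and the decisive input will be the solved cases of Wall's $\mathrm{D(2)}$-problem for cyclic groups.

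For (i)$\Rightarrow$(ii) I would first extract algebraic consequences from the hypothesis $\mathrm{cd}(B)=2$. Since $B$ is a finite $3$-complex its cohomological dimension is either $2$ or $3$; as it equals $2$, the contrapositive of Proposition~\ref{prop:Homo-Except-Cases}(ii) forces the Schur multiplicator to vanish, i.e. $H_2(\pi_1(B);\Z)=0$. Now $\pi_1(B)$ is finitely generated abelian, say $\Z^r\oplus\Z_{d_1}\oplus\cdots\oplus\Z_{d_k}$, and substituting into the formula~\eqref{Equ:H2-Abelian} for $H_2(A;\Z)$, the vanishing of every summand forces $\binom{r}{2}=0$ and, whenever $k\geq 1$, also $r+k-1=0$. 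The only solutions are $\Z$, a finite cyclic group $\Z_d$, or the trivial group, all of which are cyclic; this is exactly (ii).

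For (ii)$\Rightarrow$(iii), assume $\pi_1(B)$ is cyclic and split into the two cases of the hypothesis. If $B$ is an $\F$-homology rose, then Proposition~\ref{prop:Homo-Except-Cases}(i) together with $\mathrm{cd}(B)=2$ shows that $\pi_1(B)$ is infinite, so $\pi_1(B)\cong\Z$, a free group; invoking the $\mathrm{D(2)}$-property for free groups, $B$ is homotopy equivalent to a finite $2$-complex $K$. Theorem~\ref{Thm:Wall} then presents $K$ as a bouquet of $1$- and $2$-spheres, and the two constraints $\pi_1(K)\cong\Z$ and $H_2(K;\F)=0$ leave a single $S^1$ and no $S^2$, so $B\simeq S^1$. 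If instead $B$ is an $\F$-acyclic space, then $H_1(B;\F)=0$ excludes $\pi_1(B)\cong\Z$ (which would give $H_1(\Z;\F)=\F\neq 0$), so $\pi_1(B)\cong\Z_m$ with $m\geq 2$ (the value $m=1$ would make $B$ contractible, contradicting $\mathrm{cd}(B)=2$); this is finite abelian, so the $\mathrm{D(2)}$-property for finite abelian groups again replaces $B$ by a homotopy equivalent finite $2$-complex $K$. By Theorem~\ref{Thm:Mod-p-2dim}, $K\simeq\mathbb{P}_m\vee S^2\vee\cdots\vee S^2$, and $H_2(K;\F)=0$ kills all the $S^2$-summands, giving $B\simeq\mathbb{P}_m$.

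The hard part will be the reduction of $B$ to a homotopy equivalent finite $2$-complex: this is precisely an instance of the $\mathrm{D(2)}$-problem, and it is available only because the fundamental groups that survive the analysis, namely $\Z$ and $\Z_m$, lie among the groups for which the $\mathrm{D(2)}$-property is already known (Johnson for free groups, Browning--Latiolais for finite abelian groups). Once that reduction is in hand, the two-dimensional classification results of Section~\ref{Sec3} finish the argument; the only remaining care is bookkeeping, namely keeping the homology-rose case (which produces $S^1$) separate from the acyclic case (which produces $\mathbb{P}_m$) and discarding the degenerate contractible case through the standing hypothesis $\mathrm{cd}(B)=2$.
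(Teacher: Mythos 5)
Your proposal is correct and follows essentially the same route as the paper: deduce $H_2(\pi_1(B))=0$ from $\mathrm{cd}(B)=2$ via Proposition~\ref{prop:Homo-Except-Cases}, conclude cyclicity from the formula~\eqref{Equ:H2-Abelian}, and then invoke the known $\mathrm{D(2)}$-property for cyclic groups to replace $B$ by a finite $2$-complex classified in Section~\ref{Sec3}. The only difference is cosmetic: where the paper simply cites Corollary~\ref{Cor:Homology-Rose-S1} and Corollary~\ref{Cor:Mod-Acyclic-Fund}, you inline their proofs via Theorem~\ref{Thm:Wall} and Theorem~\ref{Thm:Mod-p-2dim}, and you treat the degenerate trivial-group case explicitly, which the paper leaves implicit.
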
     
      \begin{proof}
      Since $\mathrm{cd}(B)=2$, we have 
      $H_2(\pi_1(B))=0$ by Proposition~\ref{prop:Homo-Except-Cases}.
       For a finitely generated abelian group $A$, $H_2(A) = 0$ if and only if 
       $A$ is cyclic (see~\eqref{Equ:H2-Abelian}). 
       So we can derive (ii) from (i) .
       Moreover, since the $\mathrm{D(2)}$-property holds for all cyclic groups, 
      (ii) implies that $B$ is homotopy equivalent to a finite $2$-complex, which further
      implies (iii) according to Corollary~\ref{Cor:Homology-Rose-S1} 
       and Corollary~\ref{Cor:Mod-Acyclic-Fund}.                 
      \end{proof}
      
      \nn
      
      \nn

  \section{Search among $3$-manifold groups}
  
           If we assume that the group $G$ in the exact sequence~\eqref{Equ:D(2)-CoutExam} 
           is the fundamental group of a closed connected $3$-manifold $M^3$,
           then~\cite[Theorem 4.2]{BerHill03} 
           implies
           
            \begin{itemize}
             \item $M^3$ is a rational homology $3$-sphere, and so $b_1(G;\Q) = b_2(G;\Q)=0$.\n
             \item $N$ is the fundamental group of an integral homology 
             $3$-sphere $\hat{M}^3$.\n
             \item $\Gamma$ is a finite group of periodic cohomology with period $1$, $2$ or $4$,
             and $b_1(\Gamma;\Q)=b_2(\Gamma;\Q)=0$ (since $\Gamma$ is the fundamental group 
             of the plus-construction $(M^3)^+$).
           \end{itemize}
           \n
           Recall that a finite group $\Gamma$ is \emph{periodic} and of period $m >0$ if and only
           if $H^i(\Gamma;\Z) \cong H^{i+m}(\Gamma;\Z)$ for all $i\geq 1$ where $\Gamma$ acts
           on $\Z$ trivially. \nn
                      
           $M^3$ is the orbit space of a free orientation-preserving action of $\Gamma$ on 
           $\hat{M}^3$.        
          Note that the existence of Heegaard splittings for closed $3$-manifolds implies that
            $\mathrm{def}(G) = \mathrm{def}(\pi_1(M^3))\geq 0$. Then 
            since $b_1(G;\Q) = 0$, the inequality~\eqref{Def:Deficiency-G} implies 
            \[ \mathrm{def}(G)=0,\ H_2(G;\Z) =0.  \]
            
             If $\hat{M}^3$ is the $3$-sphere $S^3$, 
            $\Gamma$ is the fundamental group of a spherical $3$-manifold
            and hence $\mathrm{gap}(\Gamma;\Q)=0$ (see Example~\ref{Exam:3-manifolds}). 
            So if we want to have $\mathrm{gap}(\Gamma;\Q) > 0$,  
            $\hat{M}^3$ has to be an integral homology sphere other than $S^3$.
               \n

           \begin{prop}\label{Prop:Hom-sphere}
             If a finite group $\Gamma$ with $\mathrm{def}(\Gamma)<0$ 
             can act freely on an integral homology $3$-sphere $\Sigma^3 \ncong S^3$, 
             then it gives a counterexample of 
                the $\mathrm{D(2)}$-problem.
           \end{prop}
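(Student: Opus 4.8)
The plan is to realize the hypotheses as a concrete instance of the exact sequence~\eqref{Equ:D(2)-CoutExam} with $\F = \Q$, and then to feed it into the plus-construction machinery set up above. Since $\Gamma$ is finite and acts freely on the closed manifold $\Sigma^3$, the orbit space $M^3 = \Sigma^3\slash\Gamma$ is a closed connected $3$-manifold and $\Sigma^3 \to M^3$ is a regular covering. Writing $G = \pi_1(M^3)$ and $N = \pi_1(\Sigma^3)$, I obtain the short exact sequence
\[ 1 \longrightarrow N \longrightarrow G \longrightarrow \Gamma \longrightarrow 1. \]
I would then check that this sequence meets every requirement of~\eqref{Equ:D(2)-CoutExam}, and conclude by applying the plus construction to a finite $2$-complex modelling $G$.

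First I would verify the conditions on $N$ and $G$. Because $\Sigma^3$ is an integral homology $3$-sphere, $N^{\mathrm{ab}} = H_1(\Sigma^3;\Z) = 0$, so $N$ is perfect; being finitely generated and normal in $G$, it is finitely closed. For $G$, a transfer argument gives $H_*(M^3;\Q) \cong H_*(\Sigma^3;\Q)^{\Gamma}$, so $M^3$ is a rational homology sphere and $b_1(G;\Q) = b_1(M^3;\Q) = 0$. The existence of a Heegaard splitting yields $\mathrm{def}(G) \geq 0$, and plugging $b_1(G;\Q) = 0$ into~\eqref{Def:Deficiency-G} forces $\mathrm{def}(G) = 0$ and $H_2(G;\Z) = 0$, exactly as in the discussion preceding this proposition; hence $b_2(G;\Q) = 0$ and $\mathrm{gap}(G;\Q) = b_1(G;\Q) - b_2(G;\Q) - \mathrm{def}(G) = 0$. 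It remains to see that $N \neq 1$: otherwise $G = \Gamma$ would itself be a closed $3$-manifold group, forcing $\mathrm{def}(\Gamma) \geq 0$ and contradicting $\mathrm{def}(\Gamma) < 0$ (equivalently, $N = 1$ would make $\Sigma^3$ a simply connected homology $3$-sphere, hence $S^3$ by the Poincar\'e conjecture, against the hypothesis $\Sigma^3 \ncong S^3$).

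For the quotient $\Gamma$, finiteness gives $b_1(\Gamma;\Q) = b_2(\Gamma;\Q) = 0$, so
\[ \mathrm{gap}(\Gamma;\Q) = -\mathrm{def}(\Gamma) > 0, \]
which is precisely the positivity supplied by $\mathrm{def}(\Gamma) < 0$. Thus all conditions of~\eqref{Equ:D(2)-CoutExam} hold. To assemble the counterexample, I would invoke Proposition~\ref{Prop:2-dim-gap}: since $b_2(G;\Q) = 0$ and $\mathrm{gap}(G;\Q) = 0$, there is a connected finite $2$-complex $K$ with $\pi_1(K) \cong G$ and $b_2(K;\Q) = 0$, and as $b_1(K;\Q) = b_1(G;\Q) = 0$ it is in fact $\Q$-acyclic. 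The plus construction with respect to the perfect finitely closed subgroup $N$ then produces a finite $3$-complex $K^+$ with $H_*(K^+;\Z) \cong H_*(K;\Z)$, $\pi_1(K^+) \cong \Gamma$, and $\mathrm{cd}(K^+) = \mathrm{cd}(K) = 2$. In particular $b_2(K^+;\Q) = b_2(K;\Q) = 0 = b_2(\Gamma;\Q)$, while $\mathrm{gap}(\pi_1(K^+);\Q) = \mathrm{gap}(\Gamma;\Q) > 0$, so Lemma~\ref{Lem:Principal} shows that $K^+$ is not homotopy equivalent to any finite $2$-complex. Combined with $\mathrm{cd}(K^+) = 2$, this exhibits $K^+$ as a counterexample to the $\mathrm{D(2)}$-problem, indeed a $\Q$-acyclic one.

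The main obstacle I expect is the verification that the $3$-manifold group $G = \pi_1(M^3)$ is ``$\Q$-efficient,'' that is, that $H_2(G;\Z) = 0$ and $\mathrm{gap}(G;\Q) = 0$; this is where the interplay between the Heegaard bound $\mathrm{def}(G) \geq 0$, the Schur-multiplier inequality~\eqref{Def:Deficiency-G}, and the transfer computation $b_1(G;\Q) = 0$ must all be combined. The other delicate point is ensuring $N \neq 1$, which is exactly where the hypothesis $\Sigma^3 \ncong S^3$ is genuinely used; everything else is a bookkeeping check that the data $(N, G, \Gamma)$ fit the template~\eqref{Equ:D(2)-CoutExam}.
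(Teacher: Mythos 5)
Your proof is correct, and it assembles the same machinery the paper uses --- the template \eqref{Equ:D(2)-CoutExam} with $G=\pi_1(\Sigma^3\slash\Gamma)$, $N=\pi_1(\Sigma^3)$, the plus construction, and Lemma~\ref{Lem:Principal} --- but it obtains the crucial $\Q$-acyclic $2$-complex by a different route. The paper simply takes $K$ to be the $2$-skeleton of the quotient manifold $\Sigma^3\slash\Gamma$: since the quotient is an orientable rational homology $3$-sphere, its $2$-skeleton is already a finite $2$-dimensional $\Q$-acyclic complex with fundamental group $G$ (as in Example~\ref{Exam:3-manifolds}), so no group-theoretic input about $G$ is needed inside the proof itself. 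You instead first verify $\mathrm{def}(G)=0$, $H_2(G;\Z)=0$ and $\mathrm{gap}(G;\Q)=0$ via the Heegaard bound, the transfer computation $b_1(G;\Q)=0$ and the Schur multiplicator inequality \eqref{Def:Deficiency-G} (this reproduces the paper's discussion at the opening of the section, which its proof invokes with ``by our discussion''), and then manufacture $K$ abstractly from Proposition~\ref{Prop:2-dim-gap}. Your abstract construction buys independence from the cell structure and orientability of the quotient manifold (the geometric route implicitly uses that the free $\Gamma$-action is orientation-preserving so that $H_2(K;\Q)=H_2(\Sigma^3\slash\Gamma;\Q)=0$), while the paper's route is shorter and exhibits the counterexample concretely inside the manifold. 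A further small improvement on your side: you note that $N\neq 1$ already follows from $\mathrm{def}(\Gamma)<0$, since $N=1$ would make the finite group $\Gamma$ a closed $3$-manifold group with $\mathrm{def}(\Gamma)\geq 0$; the paper leaves nontriviality of $\pi_1(\Sigma^3)$ implicit in the hypothesis $\Sigma^3\ncong S^3$, which strictly speaking requires the Poincar\'e conjecture to conclude, whereas your deficiency argument avoids it.
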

           \begin{proof}
              Since $\Gamma$ is finite, the orbit space 
              $\Sigma^3\slash \Gamma$ is a rational homology $3$-sphere.
             Since $b_1(\Gamma;\Q)=b_2(\Gamma;\Q) =0$, $\mathrm{def}(\Gamma)<0$ implies that
             $\mathrm{gap}(\Gamma; \Q) > 0$.
               Let $K$ be the $2$-skeleton of $\Sigma^3\slash \Gamma$. It is clear that
              $K$ is a $2$-dimensional $\Q$-acyclic space. 
              Then by our discussion,
              the plus construction $K^+$ with respect to the nontrivial group
              $\pi_1(\Sigma^3) <  \pi_1(\Sigma^3\slash \Gamma) = \pi_1(K)$ 
              is a counterexample of the $\mathrm{D(2)}$-problem.
           \end{proof}
           
            By Smith's theory, if a finite group 
            $\Gamma$ can act freely on an integral homology $3$-sphere, it is necessary that
            any elementary abelian $p$-subgroup of $\Gamma$ is cyclic for all prime $p$.
            This is equivalent to say that
             any abelian subgroup of $\Gamma$ is cyclic (see~\cite[p.157]{Brown82}). 
             Such finite groups are all periodic and have been
             classified by Suzuki and Zassenhaus 
            (see~\cite[p.154]{AdemMilg94} or~\cite[Chapter 6.3]{Wolf74}).
            \n
            
            But it is not true that any
            finite group whose abelian subgroups are all cyclic can act freely  
          on an integral homology sphere. An additional restriction is 
          the Milnor condition (see~\cite{Milnor57}) 
          which says that if a group $\Gamma$ can act freely on an integral homology sphere, 
          $\Gamma$ has at most one element of order two. 
          This excludes, for example, dihedral groups.
          A list of the possible finite groups which may act freely on an integral homology 
          $3$-sphere is given in Milnor's paper~\cite{Milnor57}. 
          But the complete classification of
          such groups remains open (see~\cite{DavMilg85} and~\cite{Zimm11} 
          for more information). 
          The difficulty lies in the understanding of a family of groups
           defined below. For relatively coprime positive integers $8n$, $k$ and $l$,
           let $Q(8n, k, l)$ denote the group with presentation
            \[ \langle x, y, z \,|\, x^2=(xy)^2 = y^{2n},  
            z^{kl} = 1, xzx^{-1} = z^r, yzy^{-1} = z^{-1}  \rangle \]
            where $r\equiv -1 \mod k$ and $r\equiv +1 \mod l$. 
            The product of $Q(8n, k, l)$ 
            with a cyclic group of coprime order are the only type of groups in 
            Milnor's list~\cite{Milnor57}
            which may possibly act freely on some integral homology $3$-sphere other than $S^3$.
            In addition, a result in~\cite{Lee73} further limits 
            the possibility to: $n$ is odd and $n > k > l \geq 1$.  \n
            
            Therefore, if we assume 
            the group $G$ in~\eqref{Equ:D(2)-CoutExam} to be the fundamental group
            of a closed $3$-manifold,
            the groups $\Z_m\times Q(8n, k, l)$ with $n$ odd and $n > k > l \geq 1$
           are the only candidates for $\Gamma$ for us to 
            construct counterexamples
            of the $\mathrm{D(2)}$-problem in this approach.
            \\

       \section{Search among finite groups with trivial multiplicator and negative deficiency}
       
  Let $\Gamma$ be a finite group with $H_2(\Gamma)=0$ and 
  $\mathrm{def}(\Gamma)<0$. Then 
  $$b_1(\Gamma;\Q) = b_2(\Gamma;\Q)=0, \ \ \mathrm{gap}(\Gamma;\Q) > 0.$$
  Let $K$ be a finite $2$-complex
   with $\pi_1(K)\cong \Gamma$. So by Hopf's
    exact sequence       
    \begin{equation*} 
        \pi_2(K) \longrightarrow H_2(K) \longrightarrow H_2(\Gamma) \longrightarrow 0, 
    \end{equation*}    
    we can attach some $3$-cells to $K$ to obtain a finite $3$-complex $B$ so that $B$
    is a
   $\Q$-acyclic space with $\pi_1(B)\cong \Gamma$ (see
    the proof of Proposition~\ref{Prop:Fund-Group}). Actually, we can require
     $H_2(B)=H_3(B)=0$.\nn
   
      In addition, Proposition~\ref{Prop:2-dim-gap} (or Proposition~\ref{prop:Deficiency-1}) 
  implies that $B$ can not be homotopy equivalent to a $2$-complex.
  So $B$ would be a counterexample of the $\mathrm{D(2)}$-problem if
   $\mathrm{cd}(B)=2$.\n

  Examples of finite groups with trivial multiplicator and negative deficiency
   can be found in Swan~\cite{Swan65} and Kov\'acs~\cite{Kov95}. \n
  
  The following examples are taken from~\cite{Swan65}.  
    Let $\Z_3=\langle x \rangle $ acts on $(\Z_7)^k$ by 
   $ xyx^{-1} = y^2 $ for all $y \in (\Z_7)^k$.
 Let $G_k$ be the corresponding semi-direct product of $\Z_3$ and $(\Z_7)^k$.
 It is shown in~\cite[p.197]{Swan65} that
 \begin{itemize}
  
  \item $H_2(G_k) =0$ for any $k$. \n
  
  \item $\mathrm{def}(G_k) <0$ for any $k\geq 3$ and
   $\mathrm{def}(G_k) \rightarrow -\infty$ as $k\rightarrow \infty$.\n
   
   \item $G_k$ is not efficient for any $k\geq 3$. \n
  \end{itemize}  
      
 So $ \mathrm{gap}(G_k;\Q) > 0 $ for all $k\geq 3$.
 By the preceding discussion, there exists a finite $3$-complex $B_k$ 
     with $\pi_1(B_k)\cong G_k$ and $B_k$ is a $\Q$-acyclic space. 
    When $k\geq 3$, $B_k$ 
  is not homotopy equivalent to a $2$-complex.\n
     
   Let $\widetilde{B}_k$ be the universal covering of $B_k$ whose cellular chain complex
    is: 
    \[ \cdots \ 0\longrightarrow C_3(\widetilde{B}_k) \overset{\tilde{\partial}_3}{\longrightarrow}  
          C_2(\widetilde{B}_k) \overset{\tilde{\partial}_2}{\longrightarrow}
           C_1(\widetilde{B}_k) \overset{\tilde{\partial}_1}{\longrightarrow}
            C_0(\widetilde{B}_k) \longrightarrow 0. \]
        To prove $\mathrm{cd}(B_k)= 2$,  it is sufficient to show that
    $\tilde{\partial}_3$ splits, i.e. 
    there exists a $\Z[\pi_1(B_k)]$-module homomorphism $\beta: C_2(\widetilde{B}_k) \rightarrow 
    C_3(\widetilde{B}_k)$ so that $\beta\circ 
    \tilde{\partial}_3 = \mathrm{id}_{C_3(\widetilde{B}_k)}$. 
   \n
   
  \noindent \textbf{Claim:}   $\mathrm{cd}(B_k)= 2$ if and only if $H^3(\widetilde{B}_k)=0$.\nn
    
   Since $\widetilde{B}_k$ is compact, we have
      $$ H^3(B_k,\Z[G_k]) \cong  H^3_c(\widetilde{B}_k) = H^3(\widetilde{B}_k)\ \,
      \text{(see~\cite[Proposition 3H.5]{Hat-book})}.$$
      So if $\mathrm{cd}(B_k)= 2$, we must have $H^3(\widetilde{B}_k)=0$. Conversely, assume
         $H^3(\widetilde{B}_k)=0$.
      We can think of $\mathrm{id}_{C_3(\widetilde{B}_k)}$ as an element of
      $H^3(B_k,C_3(\widetilde{B}_k))$. But  $H^3(B_k,C_3(\widetilde{B}_k))$ is zero
      because $C_3(\widetilde{B}_k)$ is a free $\Z[G_k]$-module and 
      $H^3(B_k,\Z[G_k])=0$. Therefore, there exists a $\Z[G_k]$-module homomorphism
            $\beta: C_2(\widetilde{B}_k) \rightarrow 
    C_3(\widetilde{B}_k)$ so that 
    $\beta\circ \tilde{\partial}_3 = \mathrm{id}_{C_3(\widetilde{B}_k)}$.
     Then $\partial_3$ is a split injection.
    The claim is proved.\n
    
     So to compute $\mathrm{cd}(B_k)$, it amounts to determine
    whether $H^3(\widetilde{B}_k,\Z)$ is zero. But the calculation is not
    clear to us.\\

     \section*{Acknowledgements}
   The authors want to thank Gongxiang Liu and Shengkui Ye for some helpful discussions.
   \\

\end{document}